\RequirePackage{fix-cm}
\documentclass[smallextended,referee,envcountsect]{svjour3}       
\smartqed  

\usepackage{graphicx}
\usepackage{amsmath, ntheorem,dsfont}

\usepackage{pifont}

\usepackage{amssymb}
\usepackage{stackrel}
\usepackage{marvosym, enumerate}
\usepackage{amstext, amscd, latexsym}
\usepackage{amsfonts, ragged2e}
\usepackage{mathrsfs, pgfplots, enumerate, setspace}
\usepackage{subfigure}
\usepackage{cases}
\smartqed
\usepackage{cite}
\usepackage{amstext, graphicx, amscd, latexsym, amssymb}
\usepackage{amsfonts, ragged2e}
\usepackage{pgfplots, setspace}
\usepackage{geometry}
\usepackage{latexsym,bm}
\usepackage{float}
\usepackage{epstopdf,caption}
 
\usepackage{amsfonts,amsmath,amsbsy,amssymb,amscd,mathrsfs}
\usepackage{graphicx}
\usepackage{epstopdf}
\usepackage{algorithmic} 
\usepackage{graphicx,epstopdf}
 \usepackage{mathtools}

 \newcommand{\nm}[1]{\left\lVert {#1} \right\rVert}
 
 \newcommand{\dual}[1]{\left\langle {#1} \right\rangle}

\usepackage[figuresright]{rotating}
\usepackage[sort&compress,numbers]{natbib}
\usepackage[ruled,linesnumbered]{algorithm2e} 
\usepackage[colorlinks,linkcolor=blue,anchorcolor=blue,citecolor=blue,urlcolor=blue]{hyperref} 

\newtheorem{assumption}{Assumption}

\journalname{}

\begin{document}

\title{Second-order Methods for Multiobjective Composite Optimization: Preconditioning Strategies, Subspace Variants and Inexact Solutions}

\author{Jian Chen \and  Xinmin Yang}

\institute{J. Chen \at College of Mathematics, Sichuan University, Chengdu 610065, China\\
	\href{mailto:chenjian_math@163.com}{chenjian\_math@163.com} \\
	\Letter X.M. Yang \at National Center for Applied Mathematics in Chongqing,  Chongqing Normal University, Chongqing 401331, China\\
	\href{mailto:xmyang@cqnu.edu.cn}{xmyang@cqnu.edu.cn}  \\}

\date{Received: date / Accepted: date}

\maketitle

\begin{abstract}
Multiobjective composite optimization problems arise in sparse regularization, constrained multiobjective models, and multi-task learning, but their numerical solution remains challenging when the smooth components are ill-conditioned. Proximal gradient methods are inexpensive per iteration but may converge slowly, while proximal Newton and quasi-Newton methods exploit curvature information at the cost of evaluating expensive metric proximal mappings. To address these issues, we propose a preconditioned proximal Barzilai--Borwein method for multiobjective composite optimization. The method combines objective-wise Barzilai--Borwein scaling, which reduces imbalance among objectives, with a common preconditioner that captures shared curvature information. To avoid non-diagonal metric proximal mappings, we develop a subspace variant in which the search direction is computed in a two-dimensional subspace generated by a proximal-gradient-type direction and a projected historical direction. By constructing a conjugate basis with respect to the preconditioning metric, the subspace model decomposes into tractable one-dimensional subproblems. The framework is further extended to nonsmooth terms of the form $g_i(Ax)$ through a linear-operator-aware preconditioner, yielding explicit proximal evaluations via dual subproblems. We also analyze an inexact version based on relaxed descent conditions. We establish the global convergence of the inexact algorithm in the nonconvex setting and prove a linear convergence rate under an error-bound condition. Numerical experiments on ill-conditioned $\ell_1$-regularized, structured $\ell_1$-regularized, and linearly constrained problems demonstrate the effectiveness of the proposed method.

\keywords{Multiobjective optimization \and Preconditioning \and Subspace method \and Inexact solution}
\subclass{90C29 \and 90C30}
\end{abstract}

\section{Introduction}
In this paper, we consider the following multiobjective composite optimization problem:
\begin{align*}
	\min\limits_{x\in\mathbb{R}^{n}} F(x), \tag{MCOP}\label{MCOP}
\end{align*}
where $F:\mathbb{R}^{n}\rightarrow(\mathbb{R}\cup\{+\infty\})^{m}$ is a vector-valued function. Each component $F_{i}$, $i=1,2,\cdots,m$, is defined by
$$F_{i}:=f_{i}+g_{i},$$ 
with continuously differentiable function $f_{i}:\mathbb{R}^{n}\rightarrow\mathbb{R}$ and proper convex and lower semicontinuous but not necessarily differentiable function $g_{i}:\mathbb{R}^{n}\rightarrow\mathbb{R}\cup\{+\infty\}$.
In multiobjective optimization, the primary goal is to optimize multiple objective functions simultaneously. Typically, finding a single solution that optimizes all objectives is not {possible}. Therefore, optimality is defined by Pareto optimality or efficiency. A solution {is} said to be Pareto optimal or efficient if none of the objectives can be improved without worsening at least one other objective. {Applications of such problems can be found in various fields}, including engineering \cite{MA2004}, economics \cite{FW2014}, management science \cite{E1984}, and machine learning \cite{SK2018}, among others.

Over the past two decades, descent methods have received increasing attention within the multiobjective optimization community. These methods generate descent directions by solving subproblems, eliminating the necessity for predefined parameters. To the best of our knowledge, the study of multiobjective gradient descent methods can be traced back to the pioneering works by Mukai \cite{M1980} and Fliege and Svaiter \cite{FS2000}. The latter elucidated that the multiobjective steepest descent direction reduces to the steepest descent direction when dealing with a single objective. This observation inspired researchers to extend ordinary numerical algorithms for solving multiobjective optimization problems (MOPs) (see, e.g., \cite{AP2021,BI2005,CL2016,CTY2023,FD2009,FV2016,GI2004,LM2023,LP2018,MP2019,P2014,PS2022,QG2011,SP2023} and references therein). 

For multiobjective composite optimization, Tanabe et al. \cite{TFY2019} proposed the proximal gradient method for multiobjective optimization (PGMO). Subsequent analysis \cite{TFY2023b} shows that PGMO achieves convergence rates of $\mathcal{O}(1/\sqrt{k})$, $\mathcal{O}(1/k)$, and $\mathcal{O}(r^{k})$ in the nonconvex, convex, and strongly convex cases, respectively. Tanabe et al. \cite{TFY2023a} further developed an accelerated proximal gradient method for MOPs, improving the convergence rate to $\mathcal{O}(1/k^{2})$ in the convex case. In the strongly convex case, however, the linear convergence factor of PGMO is given by $r=\sqrt{1-\mu_{\min}/L_{\max}}$ \cite{TFY2023b}, where $\mu_{\min}:=\min{\mu_i:i=1,\ldots,m}$ and $L_{\max}:=\max{L_i:i=1,\ldots,m}$, with $\mu_i$ and $L_i$ denoting the strong convexity and smoothness parameters of the smooth component of the $i$th objective, respectively. Chen et al. \cite{CTY2024} pointed out that objective imbalance, caused by substantially different curvature properties across objective functions, may lead to a small value of $\mu_{\min}/L_{\max}$ and hence slow convergence. To overcome this limitation, Chen et al. \cite{CTY2024} incorporated curvature information to scale each objective, thereby alleviating the adverse effect of objective imbalance. They showed that the resulting scaled proximal gradient method for MOPs converges linearly with the factor $\sqrt{1-\min_{i=1,\ldots,m}{\mu_i/L_i}}$. Nevertheless, despite this improvement, the performance of such methods remains sensitive to the conditioning of the underlying problem. 

To address this issue, second-order methods have been introduced into multiobjective optimization to exploit curvature information. Building on the Newton framework of Fliege et al. \cite{FD2009}, Ansary developed a Newton-type proximal gradient method for MOPs (NPGMO) \cite{A2023}. Chen et al. \cite{CTY2025a} further proposed a new analytical framework and established fast local convergence results for NPGMO under suitable assumptions. Alternatively, Peng et al. \cite{PRY2024} constructed Hessian approximations and developed proximal quasi-Newton methods for MOPs. Despite these advances, the practical implementation of proximal Newton-type methods remains numerically challenging for two main reasons.
\begin{itemize}
	\item First, these methods usually construct objective-wise second-order approximation matrices, which makes the resulting direction-finding subproblem difficult to solve efficiently \cite{CTY2026}. 
	\item Second, even after the objective-wise second-order information is aggregated within the subproblem, each inner iteration often requires evaluating a proximal mapping under a non-Euclidean metric induced by the aggregated Hessian approximation. Such metric proximal mappings are generally unavailable in closed form when the metric matrix is non-diagonal.
\end{itemize}
 
 The first difficulty is not unique to composite multiobjective optimization. It also appears in second-order methods for unconstrained multiobjective optimization, where constructing objective-wise curvature approximations may lead to a high computational burden \cite{CTY2026}. To address this issue, Chen et al. \cite{CTY2026} proposed a ``preconditioning + preconditioning'' strategy, which provides a useful balance between approximation accuracy and computational efficiency. The main idea is to use a common preconditioning matrix to capture the shared curvature information of the problem, while employing objective-wise scaling parameters to alleviate the imbalance among different objectives. This idea can be naturally incorporated into multiobjective composite optimization, where both curvature information and objective-wise scaling are crucial for designing efficient descent directions \cite{CTY2024}.
 
 The second difficulty is also encountered in scalar proximal Newton-type methods. A common remedy is to employ diagonal preconditioners so that the associated proximal mappings remain simple \cite{PDB2020}, or to solve the proximal Newton subproblem inexactly by an inner iterative procedure \cite{LPQ2025}. However, diagonal preconditioning may provide only limited curvature approximation, especially for high-dimensional and ill-conditioned problems. On the other hand, solving the proximal Newton subproblem inexactly by an additional inner routine can be computationally prohibitive in multiobjective algorithms, since the direction-finding procedure already involves a dual inner loop. Adding another layer of inner iterations would significantly increase the total computational cost.
 
 Recently, motivated by the seminal work of Yuan and Stoer \cite{YS1995}, Chen et al. \cite{CY2026} developed a single-objective subspace proximal Newton-type method that avoids evaluation of complicated metric proximal mappings while still exploiting curvature information. In their subspace framework, the search direction is refined in a two-dimensional subspace generated by the current proximal gradient direction and a projected historical direction. By introducing a conjugate basis with respect to the preconditioning metric, the resulting subspace model can be decomposed into tractable one-dimensional subproblems. In this way, the method reduces the complexity of the proximal Newton subproblem and remains effective for high-dimensional ill-conditioned problems. It is also worth noting that subspace second-order method has already shown promising performance in high-dimensional ill-conditioned unconstrained multiobjective optimization \cite{CTY2025b}. These observations suggest that an appropriately designed subspace proximal Newton-type framework may provide a key tool for solving ill-conditioned multiobjective composite optimization problems efficiently.
 
 In addition to the difficulty caused by metric proximal mappings, another important challenge arises from composite terms involving linear operators. In single-objective composite optimization, when the nonsmooth term takes the form $g(Ax)$ and the linear operator $A$ is non-diagonal, the proximal mapping of $g\circ A$ is generally not available in closed form. A standard strategy is to reformulate the problem as a saddle-point problem and then solve it by primal-dual algorithms, such as the Chambolle--Pock method \cite{CP2011} and the PDHG method \cite{HY2012}. The convergence analysis of multiobjective gradient-type methods fundamentally relies on establishing a suitable descent property along the generated sequence. Similarly, in the scalar setting, the convergence analysis of primal-dual algorithms often depends on an underlying descent structure associated with the dual problem \cite{RY2022}. More precisely, the primal-dual iteration for the original problem can often be interpreted as a descent-type method, such as a gradient or proximal point method, applied to the corresponding Fenchel dual problem. In the context of multiobjective optimization, Fenchel duality has been extensively investigated by Bo\c{t} et al. \cite{BGW2009}. However, existing multiobjective Fenchel duality frameworks do not provide a dual descent mechanism analogous to the scalar case. This creates a substantial obstacle to the convergence analysis of primal-dual algorithms for multiobjective composite optimization problems with linear operators. 

The main purpose of this paper is to develop second-order inspired proximal methods for multiobjective composite optimization that strike a balance between curvature exploitation and computational efficiency. The main contributions of this paper are summarized as follows.

\begin{itemize}
	\item[(i)] We propose a preconditioned proximal Barzilai-Borwein method for multiobjective composite optimization. The proposed method uses a common positive definite matrix to approximate the overall second-order information of the problem, while objective-wise Barzilai-Borwein scaling parameters are employed to alleviate imbalance among different objectives. Under standard assumptions, we establish the local superlinear convergence of the proposed method.
	\item[(ii)] To avoid computing complicated metric proximal operators, we develop a subspace preconditioned proximal Barzilai-Borwein framework. More precisely, the preconditioned proximal Barzilai-Borwein subproblem is approximately solved in a two-dimensional subspace generated by a current proximal-gradient-type direction and a projected historical direction. By constructing a conjugate basis with respect to the preconditioning metric, the resulting two-dimensional nonsmooth subproblem can be efficiently approximated by solving two one-dimensional subproblems. It is worth emphasizing that this decomposition relies essentially on the use of a single common second-order approximation. This feature makes the proposed second-order model crucial for the decomposition of the subspace subproblem. We further explain that proximal Newton-type models with objective-wise second-order matrices generally do not admit such a decomposition (see Subsection \ref{sec4.4}).
	
	\item[(iii)] We further extend the proposed subspace second-order framework to multiobjective composite optimization problems involving a linear operator, namely problems of the form $f(x)+g(Ax)$. Since the subspace model can effectively reduce the computational difficulty caused by the linear operator, we design a linear-operator-aware preconditioned subproblem for constructing the current descent direction when $A$ has full row rank. With this preconditioner, the metric proximal mapping appearing in each dual iteration can be computed explicitly. In this way, the difficulty of solving problems with the structure $f(x)+g(Ax)$ is decomposed into two parts: the linear-operator-aware preconditioning removes the computational obstacle caused by $A$ in the construction of the subspace, while the subspace strategy improves the exploitation of local curvature information. To the best of our knowledge, this is the first descent method for multiobjective composite problems with the structure $f(x)+g(Ax)$.
	
	\item[(iv)] We introduce an inexact solution strategy for both the full-space and subspace subproblems. Specifically, approximate solutions are defined through relaxed descent-type optimality conditions (\ref{app1}) and (\ref{app2}). Based on these inexact conditions, we prove that the generated subspace search direction satisfies sufficient descent conditions (see Proposition \ref{suco}). These conditions play a central role in establishing the global convergence of the proposed method. Moreover, under an appropriate error-bound condition, we further obtain a linear convergence rate for the inexact subspace method.
	
	\item[(v)] From the computational perspective, we derive the gradients of the dual subproblems by using Danskin's theorem and solve the resulting simplex-constrained dual problems by a spectral projected gradient method with warm start. Numerical experiments on high-dimensional ill-conditioned quadratic problems with $\ell_1$-regularization, structured $\ell_1$-regularization, and linear constraints demonstrate the effectiveness of the proposed inexact subspace method. In particular, owing to the inexact solution strategy and the warm-start technique, each dual subproblem requires fewer than two inner iterations on average to satisfy the inexact descent condition. In many cases, no additional inner iteration is needed, since the warm-started approximate dual solution from the previous iteration already satisfies the current inexact descent condition.
\end{itemize}

The rest of this paper is organized as follows. Section \ref{sec2} introduces some basic concepts and algorithms for (\ref{MCOP}). Section \ref{sec3} presents the preconditioned proximal Barzilai-Borwein method and analyzes its local superlinear convergence. Section \ref{sec4} develops the subspace framework and discusses the extension to composite problems with linear operators. Section \ref{sec5} introduces the inexact subspace preconditioned method and establishes its global and linear convergence properties. Section \ref{sec6} reports numerical experiments on several classes of ill-conditioned test problems. Finally, some conclusions are drawn at the end of the paper.

\section{Preliminaries and Related Algorithms}\label{sec2}
\subsection{Notations and Auxiliary Results}
Throughout the paper, we equip the Euclidean space $\mathbb{R}^n$ with the standard inner product $\langle\cdot,\cdot\rangle$ and its induced norm $\|\cdot\|$. Denote by $\mathbb{S}^{n}_{++}$ ($\mathbb{S}^{n}_{+}$) the set of symmetric positive (semi-)definite matrices and by $\mathbb{O}^{n}$ the set of orthogonal matrices in $\mathbb{R}^{n\times n}$. The rank of a matrix is denoted by $\mathcal{R}(\cdot)$. For a positive definite matrix $H$, the notation $\|x\|_{H}=\sqrt{\langle x,Hx \rangle}$ is used to represent the norm induced by $H$ on vector $x$. For a mapping $f:\mathbb{R}^n\to\mathbb{R}^m$, we denote by $Jf(x)\in\mathbb{R}^{m\times n}$ its Jacobian at $x$, and by $\nabla f_i(x)\in\mathbb{R}^n$ the gradient of its $i$th component $f_i$ at $x$. We write $F_i'(x;d)$ for the directional derivative of $F_i$ at $x$ along the direction $d$. For simplicity, we denote $[m]:=\{1,2,...,m\}$, and $$\Delta_{m}:=\left\{\lambda:\sum\limits_{i\in[m]}\lambda_{i}=1,\lambda_{i}\geq0,\ i\in[m]\right\}$$ the $m$-dimensional unit simplex. {For $a,b\in\mathbb{R}^{m}$, if $b_{i}\neq0$ for $i\in[m]$, we denote 
$$\frac{a}{b}:=\left(\frac{a_{1}}{b_{1}},\cdots,\frac{a_{m}}{b_{m}}\right).$$} To prevent any ambiguity, we establish the  {partial} order $\preceq(\prec)$ in $\mathbb{R}^{m}$ as follows: $$u\preceq(\prec)v~\Leftrightarrow~v-u\in\mathbb{R}^{m}_{+}(\mathbb{R}^{m}_{++}),$$
and in $\mathbb{S}^{n}$ as follows:
$$U\preceq(\prec)V~\Leftrightarrow~V-U\in\mathbb{S}^{n}_{+}(\mathbb{S}^{n}_{++}).$$
\par Next, we introduce optimality concepts for (\ref{MCOP}) in the Pareto sense. 

\begin{definition}\label{def1}
	A vector $x^{\ast}\in\mathbb{R}^{n}$ is said to be a Pareto solution to (\ref{MCOP}), if there exists no $x\in\mathbb{R}^{n}$ such that $F(x)\preceq F(x^{\ast})$ and $F(x)\neq F(x^{\ast})$.
\end{definition}

\begin{definition}\label{def2}
	A vector $x^{\ast}\in\mathbb{R}^{n}$ is said to be a weakly Pareto solution to (\ref{MCOP}), if there exists no $x\in\mathbb{R}^{n}$ such that $F(x)\prec F(x^{\ast})$.
\end{definition}

\begin{definition}\label{def3}
	A vector $x^{\ast}\in\mathbb{R}^{n}$ is said to be Pareto critical point of (\ref{MCOP}), if
	$$\max\limits_{i\in[m]}F_{i}^{\prime}(x^{*};d)\geq0,~ \forall d\in\mathbb{R}^{n}.$$
\end{definition}

\par From Definitions \ref{def1} and \ref{def2}, it is evident that Pareto solutions are always weakly Pareto solutions. The following lemma shows the relationships among the three concepts of Pareto optimality.

\begin{lemma}[See Theorem 3.1 of \cite{FD2009}] The following statements hold.
	\begin{itemize}
		\item[$\mathrm{(i)}$]  If $x\in\mathbb{R}^{n}$ is a weakly Pareto solution to (\ref{MCOP}), then $x$ is a Pareto critical point.
		\item[$\mathrm{(ii)}$] Let every component $F_{i}$ of $F$ be convex. If $x\in\mathbb{R}^{n}$ is a Pareto critical point of (\ref{MCOP}), then $x$ is a weakly Pareto solution.
		\item[$\mathrm{(iii)}$] Let every component $F_{i}$ of $F$ be strictly convex. If $x\in\mathbb{R}^{n}$ is a Pareto critical point of (\ref{MCOP}), then $x$ is a Pareto solution.
	\end{itemize}
\end{lemma}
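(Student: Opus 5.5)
We prove the three items separately. Throughout we use that for $F_i=f_i+g_i$ with $f_i$ continuously differentiable and $g_i$ proper convex lsc, the directional derivative exists at points of $\mathrm{dom}\,F_i$ and equals $F_i'(x;d)=\langle\nabla f_i(x),d\rangle+g_i'(x;d)$, possibly $+\infty$. We also use that the difference quotient $(g_i(x+td)-g_i(x))/t$ is nondecreasing in $t>0$ by convexity, so its limit as $t\downarrow0$ is its infimum.

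\textbf{Item (i).} We argue by contraposition. Suppose $x$ is not Pareto critical, so there is $d$ with $\max_i F_i'(x;d)<0$. Then for each $i$ there is $\bar t_i>0$ with $F_i(x+td)<F_i(x)$ for all $t\in(0,\bar t_i)$. This follows from the definition of the directional derivative as a limit: the quotient $(F_i(x+td)-F_i(x))/t$ is eventually negative. Taking $t<\min_i\bar t_i$ (finitely many objectives) gives $F(x+td)\prec F(x)$. This contradicts weak Pareto optimality. The only care needed is that $F_i(x)$ is finite; this holds because a weakly Pareto point must lie in $\bigcap_i\mathrm{dom}\,F_i$, or at least the statement implicitly assumes it.

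\textbf{Item (ii).} Again argue by contraposition. Suppose $x$ is not weakly Pareto, so some $y$ satisfies $F(y)\prec F(x)$. Set $d=y-x$. By convexity of $F_i$, the quotient $(F_i(x+td)-F_i(x))/t$ is nondecreasing in $t\in(0,1]$. Hence
$$F_i'(x;d)\le F_i(y)-F_i(x)<0 \quad\text{for all } i,$$
so $\max_iF_i'(x;d)<0$, and $x$ is not critical.

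\textbf{Item (iii).} Suppose $x$ is critical but not Pareto. Then some $y$ satisfies $F(y)\preceq F(x)$ and $F(y)\neq F(x)$, so in particular $y\neq x$. Let $z=(x+y)/2$. Strict convexity gives
$$F_i(z)<\tfrac12F_i(x)+\tfrac12F_i(y)\le F_i(x)\quad\text{for every } i.$$
Hence $F(z)\prec F(x)$, so $x$ is not weakly Pareto. By item (ii), which applies since strict convexity implies convexity, $x$ then cannot be critical. This is a contradiction.

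\textbf{Main obstacle.} The only delicate point is handling extended values. In (ii), $F_i(y)<F_i(x)$ forces $F_i(y)<\infty$, so $y\in\mathrm{dom}\,F_i$, and convexity then keeps the segment from $x$ to $y$ inside the domain. In (i), we must ensure $x\in\mathrm{dom}\,F$ so that the directional derivatives are meaningful. Apart from this bookkeeping, the argument is the standard one from Theorem 3.1 of \cite{FD2009}.
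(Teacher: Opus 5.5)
Your proof is correct. The paper gives no argument of its own. It only cites Theorem 3.1 of \cite{FD2009}, which is formulated for smooth $F$ with criticality expressed through the Jacobian. In that setting the natural proof of (ii) and (iii) combines two facts: the gradient inequality $F_j(y)\geq F_j(x)+\langle\nabla F_j(x),y-x\rangle$ (strict for $y\neq x$ in (iii)), and the observation that criticality yields some $j$ with $\langle\nabla F_j(x),y-x\rangle\geq 0$.

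Your write-up supplies the adaptation that this citation leaves implicit for the composite, extended-valued setting of (\ref{MCOP}). The linearization becomes $F_i'(x;d)$. The gradient inequality becomes monotonicity of the difference quotient, which gives $F_i'(x;y-x)\leq F_i(y)-F_i(x)$ without any differentiability of $g_i$. For (iii), your midpoint reduction to (ii) uses only the defining inequality of strict convexity, so no strict first-order inequality or second-order assumption is needed. Arguing by contraposition rather than directly is immaterial.

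One side remark in your proposal is false, although the proof does not depend on it: a weakly Pareto point need not lie in $\bigcap_i\mathrm{dom}\,F_i$. Take $n=1$, $m=2$, $f_1=0$, $g_1$ the indicator function of $[1,2]$, $f_2(x)=x^2$ and $g_2=0$. The point $x=0$ is weakly Pareto, since no $y$ satisfies $F_2(y)<F_2(0)=0$, yet $F_1(0)=+\infty$. So $x\in\bigcap_i\mathrm{dom}\,F_i$ is a genuine implicit hypothesis of all three items, because it is what makes $F_i'(x;d)$ meaningful. You should state it as an assumption rather than suggest it can be derived.

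Relatedly, $F_i'(x;d)$ can equal $-\infty$ as well as $+\infty$; take $g(t)=-\sqrt{t}$ on $[0,\infty)$ at $t=0$. Your arguments in (i) and (ii) already cover that case.
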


\par Next, we introduce a relaxation of Pareto criticality, called preconditioned $\varepsilon$-Pareto criticality.
\begin{definition}\label{pc}
	Let $\varepsilon>0$, $P\succ0$. A vector $x\in\mathbb{R}^{n}$ said to be a preconditioned $\varepsilon$-Pareto critical point of (\ref{MCOP}), if $\nm{d(x)}\leq \varepsilon$, where $d(x)$ is the minimizer of the following subproblem:
	$$\min\limits_{d\in\mathbb{R}^{n}} \max\limits_{i\in[m]}\ \left\langle\nabla f_{i}(x),d\right\rangle + g_{i}(x+d)-g_{i}(x)+\frac{1}{2}\|d\|_{P}^{2}.$$	
\end{definition}

\begin{definition}
	A differentiable function $h:\mathbb{R}^{n}\rightarrow\mathbb{R}$ is $L$-smooth if $$h(y)\leq h(x) + \dual{\nabla h(x),y-x}+\frac{L}{2}\|y-x\|^{2}$$  holds for
	all $x,y\in\mathbb{R}^{n}$. And $h$ is $\mu$-strongly convex if $$h(y)\geq h(x) + \dual{\nabla h(x),y-x}+\frac{\mu}{2}\|y-x\|^{2}$$ holds for all $x,y\in\mathbb{R}^{n}$. When the Euclidean distance is replaced by $\|\cdot\|_{B}$, where $B$ is a positive definite matrix, we say $h$ is $L$-smooth and $\mu$-strongly convex relative to $\|\cdot\|_{B}$.
\end{definition}

The proximal operator associated with $h$ is defined by
\[
\mathrm{prox}_h(x)
=\arg\min_{u\in\mathbb{R}^n}
\left\{
h(u)+\frac12\|u-x\|^2
\right\}.
\]
The corresponding Moreau envelope is defined by
\[
\mathcal{M}_h(x)
=\min_{u\in\mathbb{R}^n}
\left\{
h(u)+\frac{1}{2}\|u-x\|^2
\right\}.
\]

Let $H \in \mathbb{R}^{n\times n}$ be a symmetric positive definite matrix. 
The proximal operator of $h$ with respect to the metric induced by $H$ is defined as
\[
\mathrm{prox}_h^H(x)
=\arg\min_{u\in\mathbb{R}^n}
\left\{
h(u)+\frac{1}{2}\|u-x\|_H^2
\right\},
\]
where $\|u-x\|_H^2 := (u-x)^\top H (u-x)$.

The corresponding Moreau envelope under the metric $H$ is defined by
\[
\mathcal{M}_h^H(x)
=\min_{u\in\mathbb{R}^n}
\left\{
h(u)+\frac{1}{2}\|u-x\|_H^2
\right\}.
\]

To simplify the notation in our analysis, we denote by $$h_{\lambda}(x):=\sum\limits_{i\in[m]}\lambda_{i}h_{i}(x),$$
$$\nabla h_{\lambda}(x):=\sum\limits_{i\in[m]}\lambda_{i}\nabla h_{i}(x),$$
$$\nabla^{2}h_{\lambda}(x):=\sum\limits_{i\in[m]}\lambda_{i}\nabla^{2}h_{i}(x).$$

Next, we introduce some auxiliary results, which will be used in computing gradients of direction-finding subproblems and convergence analysis.
\begin{lemma}[Danskin's theorem: differentiability case; see {\cite[Proposition~B.22]{B2016}}]
	\label{danskin}
	Let $Y\subset \mathbb{R}^{m}$ be a compact set, and let
	$\phi:\mathbb{R}^{n}\times Y\to \mathbb{R}$ be continuous and such that $\phi(\cdot,y):\mathbb{R}^{n}\to \mathbb{R}$ is convex for each $y\in Y$. Define
	\[
	\psi(x):=\max_{y\in Y}\phi(x,y),
	\qquad
	Y(x):=\arg\max_{y\in Y}\phi(x,y).
	\]
	If, for a given $x\in\mathbb{R}^{n}$, the maximization problem has a unique
	optimal solution, i.e.,
	\[
	Y(x)=\{\bar y(x)\},
	\]
	and $\phi(\cdot,\bar y(x))$ is differentiable at $x$, then $\psi$ is differentiable at $x$, and
	\[
	\nabla \psi(x)=\nabla_x\phi(x,\bar y(x)).
	\]
\end{lemma}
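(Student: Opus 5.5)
The plan is the classical argument through directional derivatives and a compactness–continuity step. Set $y^{*}:=\bar y(x)$. Because each $\phi(\cdot,y)$ is convex and finite on $\mathbb{R}^{n}$, the function $\psi$ is a pointwise maximum of convex functions, hence convex and finite (the maximum is attained since $Y$ is compact and $\phi$ continuous). So $\psi$ has directional derivatives $\psi'(x;d)$ in every direction, and $\psi'(x;\cdot)$ is sublinear. A convex function is differentiable at $x$ exactly when $\psi'(x;\cdot)$ is linear. It therefore suffices to show
\[
\psi'(x;d)=\langle \nabla_x\phi(x,y^{*}),d\rangle \quad\text{for all } d\in\mathbb{R}^{n},
\]
since then $\nabla\psi(x)=\nabla_x\phi(x,y^{*})$.

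\emph{Lower bound.} For $t>0$ we have $\psi(x+td)\ge \phi(x+td,y^{*})$ and $\psi(x)=\phi(x,y^{*})$. Dividing the difference by $t$ and letting $t\downarrow 0$ gives
\[
\psi'(x;d)\ge \langle\nabla_x\phi(x,y^{*}),d\rangle,
\]
using the differentiability of $\phi(\cdot,y^{*})$ at $x$.

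\emph{Upper bound.} Take $t_k\downarrow0$ and choose $y_k\in Y(x+t_kd)$. By compactness, after passing to a subsequence, $y_k\to\hat y$. Continuity of $\phi$ together with the inequality $\phi(x+t_kd,y_k)\ge\phi(x+t_kd,y)$ for all $y\in Y$ gives $\hat y\in Y(x)$, so $\hat y=y^{*}$ by uniqueness. We also have $\psi(x)\ge \phi(x,y_k)$, so
\[
\frac{\psi(x+t_kd)-\psi(x)}{t_k}\le \frac{\phi(x+t_kd,y_k)-\phi(x,y_k)}{t_k}\le \phi'_x(x+t_kd,y_k;d),
\]
where the last inequality follows from convexity of $\phi(\cdot,y_k)$: the difference quotient over $[x,x+t_kd]$ is at most the directional derivative at the right endpoint.

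\emph{Main obstacle.} The remaining step is to show $\limsup_k \phi'_x(x+t_kd,y_k;d)\le \phi'_x(x,y^{*};d)$. Differentiability is assumed only for $y^{*}$ and only at $x$, so no joint continuity of derivatives is available. I would handle this with convexity and a second parameter $s>0$. For $t_k<s$, monotonicity of difference quotients of convex functions gives
\[
\phi'_x(x+t_kd,y_k;d)\le \frac{\phi(x+sd,y_k)-\phi(x+t_kd,y_k)}{s-t_k}.
\]
Letting $k\to\infty$ and using continuity of $\phi$, the right-hand side tends to $\bigl(\phi(x+sd,y^{*})-\phi(x,y^{*})\bigr)/s$. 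Letting $s\downarrow0$ then yields $\langle\nabla_x\phi(x,y^{*}),d\rangle$.

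Since every sequence $t_k\downarrow0$ has a subsequence along which the difference quotients of $\psi$ are bounded above by this limit, the upper bound holds for the full limit. Combining it with the lower bound gives the displayed identity, which concludes the proof.
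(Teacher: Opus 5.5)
Your proof is correct and follows the standard argument of the cited source (Bertsekas), which the paper invokes without reproducing. The lower bound comes from comparing with the unique maximizer $\bar y(x)$. The upper bound uses maximizers at $x+t_kd$, which accumulate at $\bar y(x)$ by compactness and uniqueness, together with the upper semicontinuity of directional derivatives of convex functions; you prove that last fact inline with the secant-slope bound and the auxiliary parameter $s$.

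As a minor simplification, you can skip the detour through $\phi'_x(x+t_kd,y_k;d)$. Compare secant slopes from $x$ directly: $t_k^{-1}\bigl(\phi(x+t_kd,y_k)-\phi(x,y_k)\bigr)\le s^{-1}\bigl(\phi(x+sd,y_k)-\phi(x,y_k)\bigr)$ for $0<t_k\le s$, then let $k\to\infty$ and afterwards $s\downarrow 0$.
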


\begin{lemma}\cite[Lemma 1]{CY2026}\label{ll2}
	Let $h:\mathbb{R}^{n}\rightarrow\mathbb{R}\cup\{+\infty\}$ be a proper convex and lower semicontinuous function, which is not necessarily differentiable. Assume that $x^{*}$ is the minimizer of
	\begin{equation}\label{op}
		\min\limits_{x\in\mathbb{R}^{n}}h(x)+\frac{1}{2}\nm{x}^{2}_{P},
	\end{equation}
	where $P\succ0$. Then 
	\begin{equation}
		h(x^*)-h(0)\leq-\nm{x^{*}}^{2}_{P}
	\end{equation}
\end{lemma}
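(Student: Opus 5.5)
The plan is to read off the first-order optimality condition of problem (\ref{op}) and combine it with the subgradient inequality for $h$, evaluated between $x^*$ and $0$.

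\emph{Optimality condition.} The objective $h(x)+\frac{1}{2}\nm{x}_{P}^{2}$ is a sum of a proper convex lower semicontinuous function and a finite, differentiable, strongly convex function. The sum rule for subdifferentials therefore applies without any qualification condition. Since $x^*$ is the minimizer, we get
\[
0\in\partial h(x^{*})+Px^{*},\qquad\text{i.e.,}\qquad -Px^{*}\in\partial h(x^{*}).
\]
Note that $x^*\in\mathrm{dom}\,h$, so $h(x^*)$ is finite.

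\emph{Subgradient inequality.} Apply the subgradient inequality for $h$ at $x^*$ with the subgradient $-Px^*$, and test it at the point $0$:
\[
h(0)\geq h(x^{*})+\dual{-Px^{*},0-x^{*}}=h(x^{*})+\nm{x^{*}}_{P}^{2}.
\]
Rearranging gives $h(x^*)-h(0)\leq-\nm{x^*}_{P}^{2}$, which is the claim.

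\emph{The case $0\notin\mathrm{dom}\,h$.} Here $h(0)=+\infty$ and the inequality holds trivially, with the convention $h(x^*)-h(0)=-\infty$. I would mention this explicitly; otherwise no case split is needed.

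\emph{Main obstacle.} There is essentially none. The only point to justify is the subdifferential sum rule, which holds because the quadratic term is finite and continuous everywhere.

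\emph{Alternative route.} To avoid subdifferential calculus, one can compare the objective values at $x^*$ and at $tx^*$ for $t\in(0,1)$. Convexity gives $h(tx^*)\le t\,h(x^*)+(1-t)h(0)$. Optimality of $x^*$ gives
\[
h(x^*)+\tfrac12\nm{x^*}_P^2\le h(tx^*)+\tfrac{t^2}{2}\nm{x^*}_P^2 .
\]
Combining the two, dividing by $1-t$, and letting $t\to1$ yields
\[
h(x^*)-h(0)\le-\tfrac{1+t}{2}\nm{x^*}_P^2\to-\nm{x^*}_P^2 .
\]
Either argument suffices.
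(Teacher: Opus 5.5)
Your argument is correct. The paper gives no proof of this lemma and simply imports it from \cite[Lemma~1]{CY2026}. Your main route, the optimality condition $-Px^{*}\in\partial h(x^{*})$ followed by the subgradient inequality tested at $0$, is the standard proof of this fact. Your alternative via the points $tx^{*}$ is equally valid and avoids subdifferential calculus altogether.

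One phrase needs tightening: ``without any qualification condition.'' The sum rule does need a qualification condition in general. Here it is automatically satisfied because $\frac{1}{2}\|\cdot\|_{P}^{2}$ is finite and continuous on all of $\mathbb{R}^{n}$, as you yourself note later.
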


\par In the following, we briefly review some descent methods for MCOPs.
\subsection{Proximal gradient method}
\par For $x\in\mathbb{R}^{n}$, the proximal gradient descent direction \cite{TFY2019} is defined as the optimal solution of the following subproblem:
\begin{equation}\label{prox}
	\min\limits_{d\in\mathbb{R}^{n}}\max\limits_{i\in[m]}\left\{{
		\left\langle\nabla f_{i}(x^{k}),d\right\rangle + g_{i}(x^{k}+d)-g_{i}(x^{k})}+\frac{\ell}{2}\|d\|^{2}\right\},
\end{equation}
where $\ell>0$. The subproblem can be equivalently expressed as
$$\min\limits_{d\in\mathbb{R}^{n}}\max\limits_{\lambda\in\Delta_{m}}\left\{{
	\left\langle\nabla f_{\lambda}(x^{k}),d\right\rangle + g_{\lambda}(x^{k}+d)-g_{\lambda}(x^{k})}+\frac{\ell}{2}\|d\|^{2}\right\}.$$
There exists $\lambda_{PG}^{k}\in\Delta_{m}$ such that
$$d^{k}_{PG}=\mathrm{prox}_{\frac{1}{\ell}{ g_{\lambda_{PG}^{k}}}}\left(x^{k}-\frac{1}{\ell}\nabla f_{\lambda_{PG}^{k}}(x^{k})\right)  - x^{k},$$ and $\lambda_{PG}^{k}$ is the optimal solution of the following dual problem \cite[Section 6]{TFY2023a}:
\begin{align*}
	-&\min\limits_{\lambda}\omega_{PG}^{k}(\lambda)\\
	&\mathrm{ s.t.} \ \lambda\in\Delta_{m},
\end{align*}
where 

	$$\omega_{PG}^{k}(\lambda):=\frac{1}{2\ell} \left\|\nabla f_{\lambda}(x^{k})\right\|^{2}+g_{\lambda}(x^{k})-\mathcal{M}_{\frac{1}{\ell}{ g_{\lambda}}}\left(x^{k}-\frac{1}{\ell}\nabla f_{\lambda}(x^{k})\right).$$
As described in \cite[Theorem 13]{TFY2023a}, the dual subproblem is differentiable and its gradient can be written as follows:
\begin{align*}
	\nabla\omega_{PG}^{k}(\lambda)&={g(x^{k})}-{Jf(x^{k})}\left(\mathrm{prox}_{\frac{1}{\ell}{ g_{\lambda}}}\left(x^{k}-\frac{1}{\ell}\nabla f_{\lambda}(x^{k})\right)-x^{k}\right)\\
	&-g\left(\mathrm{prox}_{\frac{1}{\ell}{ g_{\lambda}}}\left(x^{k}-\frac{1}{\ell}\nabla f_{\lambda}(x^{k})\right)\right).
\end{align*}

\begin{remark}
	In \cite[Theorem~13]{TFY2023a}, the gradient of the dual objective is derived from the differentiability of the Moreau envelope. It is worth noting that the same gradient formula can also be obtained by applying Danskin's theorem. Specifically, by Sion's minimax theorem, the direction-finding subproblem can be equivalently written as
	\[
	-\min_{\lambda\in\Delta_m}\max_{d\in\mathbb{R}^n}
	\left\{
	-\left\langle \nabla f_{\lambda}(x^k),d\right\rangle
	-g_{\lambda}(x^k+d)+g_{\lambda}(x^k)
	-\frac{\ell}{2}\|d\|^2
	\right\}.
	\]
	For this purpose, define
	\[
	\phi(\lambda,d):=
	-\left\langle \nabla f_{\lambda}(x^k),d\right\rangle
	-g_{\lambda}(x^k+d)+g_{\lambda}(x^k)
	-\frac{\ell}{2}\|d\|^2 .
	\]
	Then
	\[
	\omega_{PG}^k(\lambda)=\max_{d\in\mathbb{R}^n}\phi(\lambda,d).
	\]
	Since the maximization problem is strongly concave in $d$, its upper level sets are compact and it admits a unique maximizer. Moreover, since $\phi(\cdot,d)$ is affine for every fixed $d\in\mathbb{R}^n$, Danskin's theorem yields
	\[
	\nabla \omega_{PG}^k(\lambda)
	=
	\nabla_{\lambda}\phi(\lambda,d(\lambda))
	=
	g(x^k)-Jf(x^k)d(\lambda)
	-g\left(x^k+d(\lambda)\right),
	\]
	where
	\[
	d(\lambda):=
	\operatorname{prox}_{\frac{1}{\ell}g_{\lambda}}
	\left(
	x^k-\frac{1}{\ell}\nabla f_{\lambda}(x^k)
	\right)-x^k
	\]
	is the unique solution of $\max_{d\in\mathbb{R}^n}\phi(\lambda,d)$.
\end{remark}

\subsection{Proximal Newton-type methods}
Similar to its counterpart for SOPs, PGMO is sensitive to problem's conditioning. In response to this challenge, Ansary \cite{A2023} proposed a proximal Newton method for MCOPs. The proximal Newton direction is the optimal solution to the following subproblem:
\begin{equation}\label{nt}
	\mathop{\min}\limits_{d\in\mathbb{R}^{n}}\max\limits_{i\in[m]}\left\{\dual{\nabla f_{i}(x^{k}),d}+g_{i}(x^{k}+d)-g_{i}(x^{k})+\frac{1}{2}\dual{d,\nabla^{2}f_{i}(x^{k})d}\right\},
\end{equation}
where the Hessian matrices $\nabla^{2}f_{i}(x^{k})$ are positive definite for all $i\in[m]$ and $x\in\mathbb{R}^{n}$, the subproblem can be equivalently expressed as
$$\mathop{\min}\limits_{d\in\mathbb{R}^{n}}\max\limits_{\lambda\in\Delta_{m}}\left\{\langle\nabla f_{\lambda}(x^{k}),d\rangle+g_{\lambda}(x^{k}+d)-g_{\lambda}(x^{k})+\frac{1}{2}\dual{d,\nabla^{2}f_{\lambda}(x^{k})d}\right\}.$$
By Sion's minimax theorem, there exists $\lambda_{N}^{k}\in\Delta_{m}$ such that
$$d^{k}_{N}=\mathrm{prox}^{\nabla^{2}f_{\lambda_{N}^{k}}(x^{k})}_{{ g_{\lambda_{N}^{k}}}}\left(x^{k}-[\nabla^{2}f_{\lambda^{k}_{N}}(x^{k})]^{-1}\nabla f_{\lambda_{N}^{k}}(x^{k})\right)  - x^{k},$$ and $\lambda^{k}_{N}$ is the optimal solution of the following dual problem
\begin{align*}
	-&\min\limits_{\lambda}\omega_{N}^{k}(\lambda)\\
	&\mathrm{ s.t.} \ \lambda\in\Delta_{m},
\end{align*}
where 
$$\omega_{N}^{k}(\lambda):=\frac{1}{2} \left\|\nabla f_{\lambda}(x^{k})\right\|_{[\nabla^{2}f_{\lambda}(x^{k})]^{-1}}^{2}+g_{\lambda}(x^{k})-\mathcal{M}^{\nabla^{2}f_{\lambda}(x^{k})}_{{ g_{\lambda}}}\left(x^{k}-[\nabla^{2}f_{\lambda}(x^{k})]^{-1}\nabla f_{\lambda}(x^{k})\right).$$
By Danskin's theorem, we obtain the gradient of $\omega_{N}^{k}$ as
\begin{align*}
	\nabla \omega_{N}^{k}(\lambda)
	= g(x^{k}) - Jf(x^{k})d_{N}(\lambda) -g\left(x^k+d(\lambda)\right)
	-\frac{1}{2}Df(x^{k})[d_{N}(\lambda)]^{2},
\end{align*}
where
\[
d_{N}(\lambda)
:=
\operatorname{prox}^{\nabla^{2}f_{\lambda}(x^{k})}_{g_{\lambda}}
\left(
x^{k}
-\left[\nabla^{2}f_{\lambda}(x^{k})\right]^{-1}
\nabla f_{\lambda}(x^{k})
\right)
- x^{k},
\]
and
\[
Df(x^{k})[d_{N}(\lambda)]^{2}
:=
\left(
\left\langle d_{N}(\lambda),\nabla^{2}f_{1}(x^{k})d_{N}(\lambda)\right\rangle,
\ldots,
\left\langle d_{N}(\lambda),\nabla^{2}f_{m}(x^{k})d_{N}(\lambda)\right\rangle
\right).
\]
However, since the matrix $\nabla^{2} f_{\lambda}(x^{k})$ is generally non-diagonal, the evaluation of the above metric proximal mapping, and hence of the associated gradient, can be computationally challenging. Furthermore, exact Hessian matrices are often difficult to obtain and may not be positive definite. To address these issues, Peng et al. \cite{PRY2024} employed quasi-Newton-type methods, which construct positive definite approximations of the Hessian so as to incorporate curvature information.

\subsection{Scaled proximal gradient method}
 Chen et al. \cite{CTY2024} devised the scaled proximal gradient descent direction, which is the optimal solution of the following subproblem:
\begin{equation}\label{bb}
	\min\limits_{d\in\mathbb{R}^{n}}\max\limits_{i\in[m]}\left\{\frac{
		\left\langle\nabla f_{i}(x^{k}),d\right\rangle + g_{i}(x^{k}+d)-g_{i}(x^{k})}{\alpha_{i}(x^{k})}+\frac{1}{2}\|d\|^{2}\right\}.
\end{equation}
where $\alpha(x^{k})\in\mathbb{R}^{m}_{++}$ is given by Barzilai-Borwein method:
\begin{equation}\label{bbalpha_k}
	\alpha_{i}(x^{k})=\left\{
	\begin{aligned}
		&\max\left\{\alpha_{\min},\min\left\{\frac{\langle s_{k-1},y^{k-1}_{i}\rangle}{\nm{s_{k-1}}^{2}}, \alpha_{\max}\right\}\right\}, & \langle s_{k-1},y^{k-1}_{i}\rangle&>0, \\
		&\max\left\{\alpha_{\min},\min\left\{\frac{\nm{y^{k-1}_{i}}}{\nm{s_{k-1}}}, \alpha_{\max}\right\}\right\}, & \langle s_{k-1},y^{k-1}_{i}\rangle&<0, \\
		& \alpha_{\min}, &  \langle s_{k-1},y^{k-1}_{i}\rangle&=0,
	\end{aligned}
	\right.
\end{equation}
for all $i\in[m]$, where $\alpha_{\max}$ is a sufficiently large positive constant and $\alpha_{\min}$ is a sufficiently small positive constant, $s_{k-1}=x^{k}-x^{k-1},\ y^{k-1}_{i}=\nabla f_{i}({x^{k}})-\nabla f_{i}(x^{k-1}),\ i\in[m].$ By Sion's minimax theorem, there exists $\lambda_{BB}^{k}\in\Delta_{m}$ such that
\begin{equation}\label{bbk}
d^{k}_{BB}=\mathrm{prox}_{{ g_{{\lambda_{BB}^{k}}/{\alpha(x^{k})}}}}\left(x^{k}-\nabla f_{{\lambda_{BB}^{k}}/{\alpha(x^{k})}}(x^{k})\right)  - x^{k},	
\end{equation}
 and $\lambda_{BB}^{k}$ is the optimal solution of the following dual problem
\begin{align*}
	-&\min\limits_{\lambda}\omega_{BB}^{k}(\lambda)\\
	&\mathrm{ s.t.} \ \lambda\in\Delta_{m},
\end{align*}
where 

$$\omega_{BB}^{k}(\lambda):=\frac{1}{2} \left\|\nabla f_{{\lambda}/{\alpha(x^{k})}}(x^{k})\right\|^{2}+g_{{\lambda}/{\alpha(x^{k})}}(x^{k})-\mathcal{M}_{{ g_{{\lambda}/{\alpha(x^{k})}}}}\left(x^{k}-\nabla f_{{\lambda}/{\alpha(x^{k})}}(x^{k})\right).$$
By Danskin's theorem, its gradient can be written as follows:
\begin{align*}
	\nabla\omega_{BB}^{k}(\lambda)&=\frac{g(x^{k})}{\alpha(x^k)}-\frac{{Jf(x^{k})}\left(\mathrm{prox}_{{ g_{{\lambda}/{\alpha(x^{k})}}}}\left(x^{k}-\nabla f_{{\lambda}/{\alpha(x^{k})}}(x^{k})\right)-x^{k}\right)}{\alpha(x^k)}\\
	&-\frac{g\left(\mathrm{prox}_{{ g_{{\lambda}/{\alpha(x^{k})}}}}\left(x^{k}-\nabla f_{{\lambda}/{\alpha(x^{k})}}(x^{k})\right)\right)}{\alpha(x^k)}.
\end{align*}
\begin{remark}
	{Chen et al. \cite{CTY2024} demonstrated that scaled proximal gradient method for MCOPs (SPGMO) can alleviate the impacts of interference and imbalances among objectives, resulting in improved convergence rates compared to PGMO. However, it is essential to note that BBDMO still exhibits sensitivity to conditioning, as observed from a theoretical perspective \cite{CTY2024}.}
\end{remark}
\section{Preconditioned proximal Barzilai-Borwein method}\label{sec3}
For unconstrained multiobjective optimization, Chen et al. \cite{CTY2026} proposed a preconditioned Barzilai-Borwein method to balance per-iteration cost and curvature exploration. Similarly, we can adapt the idea in (\ref{MCOP}).
%
Naturally, to mitigate the impact of conditioning and imbalances among objectives, we aim to leverage the strengths of both NPGMO and SPGMO in developing the descent direction:

\begin{equation}\label{d}
	\min\limits_{d\in\mathbb{R}^{n}}\max\limits_{i\in[m]}\left\{\frac{
		\left\langle\nabla f_{i}(x^{k}),d\right\rangle + g_{i}(x^{k}+d)-g_{i}(x^{k})}{\alpha_{i}^{k}}+\frac{1}{2}\|d\|_{B_{k}}^{2}\right\},
\end{equation}
where $\alpha^{k}\succ 0$ mitigates the impact of objective imbalances, and $B_{k}\succ0$ is applied to better capture the local geometry of the problem. We denote by $d^{k}$
the optimal solution of the minimization problem in (\ref{d}).
By Sion's minimax theorem, there exists $\lambda^{k}\in\Delta_{m}$ such that
\begin{equation}\label{dk}
	d^{k}=\mathrm{prox}^{B_{k}}_{ g_{{\lambda^{k}}/{\alpha^{k}}}}\left(x^{k}-B_k^{-1}\nabla f_{{\lambda^{k}}/{\alpha^{k}}}(x^{k})\right)  - x^{k},
\end{equation}
and
\begin{equation}\label{subgrad}
	-\nabla f_{{\lambda^{k}}/{\alpha^{k}}}(x^{k}) - B_{k}d^{k}\in \partial g_{{\lambda^{k}}/{\alpha^{k}}}(x^k+d^k),
\end{equation} 
where $\lambda^{k}$ is the optimal solution of the following dual problem
\begin{equation}\label{pdp}
	\begin{aligned}
		-&\min\limits_{\lambda}\omega^{k}(\lambda)\\
		&\mathrm{ s.t.} \ \lambda\in\Delta_{m},
	\end{aligned}
\end{equation}

where 

$$\omega^{k}(\lambda):=\frac{1}{2} \left\|\nabla f_{{\lambda}/{\alpha^{k}}}(x^{k})\right\|_{B_{k}^{-1}}^{2}+g_{{\lambda}/{\alpha^{k}}}(x^{k})-\mathcal{M}^{B_k}_{ g_{{\lambda}/{\alpha^{k}}}}\left(x^{k}-B_{k}^{-1}\nabla f_{{\lambda}/{\alpha^{k}}}(x^{k})\right).$$
By Danskin's theorem, the gradient of $\omega^{k}$ can be written as follows:
\begin{align*}
	\nabla\omega^{k}(\lambda)&=\frac{g(x^{k})}{\alpha^k}-\frac{{Jf(x^{k})}\left(\mathrm{prox}^{B_{k}}_{ g_{{\lambda}/{\alpha^{k}}}}\left(x^{k}-B_k^{-1}\nabla f_{{\lambda}/{\alpha^{k}}}(x^{k})\right)-x^{k}\right)}{\alpha^k}\\
	&-\frac{g\left(\mathrm{prox}^{B_{k}}_{ g_{{\lambda}/{\alpha^{k}}}}\left(x^{k}-B_k^{-1}\nabla f_{{\lambda}/{\alpha^{k}}}(x^{k})\right)\right)}{\alpha^k}.
\end{align*}

Next, we will present several properties of $d^{k}$.
\begin{proposition}\label{pop}
	Assume that $0< \alpha_{\min}\leq \alpha_{i}^{k}\leq\alpha_{\max}$, $a\bm I_{n}\preceq B_{k}\preceq b\bm I_{n}(a>0)$ for all $k\geq0,~i\in[m]$. Let $d^{k}$
	be defined as (\ref{d}), then the following statements hold.
	\begin{itemize}
		\item[$\mathrm{(i)}$] the following assertions are equivalent:
		\subitem$\mathrm{(a)}$ The point $x^{k}$ is non-critical;
		\subitem$\mathrm{(b)}$ $d^{k}\neq0$;
		\subitem$\mathrm{(c)}$ $d^{k}$ is a descent direction.
		\item[$\mathrm{(ii)}$] If there exists a convergent subsequence $x^{k}\stackrel{\mathcal{K}}{\longrightarrow} x^{*}$ such that $d^{k}\stackrel{\mathcal{K}}{\longrightarrow}0$, then $x^{*}$ is Pareto critical.
	\end{itemize}
\end{proposition}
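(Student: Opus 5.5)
Let $\theta_k(d):=\max_{i\in[m]}\{(\langle\nabla f_i(x^k),d\rangle+g_i(x^k+d)-g_i(x^k))/\alpha_i^k\}+\frac12\|d\|_{B_k}^2$ denote the objective of (\ref{d}), and write $\theta_k^*:=\theta_k(d^k)$. The plan rests on one key inequality, $\theta_k^*\le -\frac12\|d^k\|^2_{B_k}$. Since $\theta_k$ is strongly convex, $d^k$ is unique and $\theta_k^*\le\theta_k(0)=0$.

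To obtain the key inequality, I will use the dual representation. With $\lambda^k$ from (\ref{pdp}), $d^k$ minimizes $h(d)+\frac12\|d\|^2_{B_k}$, where $h(d):=\langle\nabla f_{\lambda^k/\alpha^k}(x^k),d\rangle+g_{\lambda^k/\alpha^k}(x^k+d)-g_{\lambda^k/\alpha^k}(x^k)$. Lemma \ref{ll2} then gives $h(d^k)\le-\|d^k\|^2_{B_k}$, and by strong duality $\theta_k^*=h(d^k)+\frac12\|d^k\|_{B_k}^2\le-\frac12\|d^k\|^2_{B_k}$. Consequently, every $i$ satisfies
$$\langle\nabla f_i(x^k),d^k\rangle+g_i(x^k+d^k)-g_i(x^k)\le-\tfrac{\alpha_i^k}{2}\|d^k\|_{B_k}^2\le-\tfrac{\alpha_{\min}a}{2}\|d^k\|^2.$$

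For (i): (b)$\Rightarrow$(c) follows by convexity of $g_i$. For $t\in(0,1]$, $F_i(x^k+td^k)-F_i(x^k)\le t[\langle\nabla f_i(x^k),d^k\rangle+g_i(x^k+d^k)-g_i(x^k)]+o(t)$, which gives $F_i'(x^k;d^k)\le-\frac{\alpha_{\min}a}{2}\|d^k\|^2<0$. Then (c)$\Rightarrow$(a) is immediate from Definition \ref{def3}. For (a)$\Rightarrow$(b), I argue by contraposition. Suppose $d^k=0$, so $\theta_k^*=0$ and $\theta_k(td)\ge0$ for all $d$ and $t>0$. Convexity of $g_i$ gives $g_i(x^k+td)-g_i(x^k)=tg_i'(x^k;d)+o(t)$. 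Dividing by $t$ and letting $t\downarrow0$ yields $\max_i F_i'(x^k;d)/\alpha_i^k\ge0$, hence $\max_iF_i'(x^k;d)\ge0$, so $x^k$ is critical.

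For (ii), I argue by contradiction. If $x^*$ is not critical, there is $\bar d$ with $\max_iF_i'(x^*;\bar d)<-2\delta<0$. Pick $t>0$ small so that $\langle\nabla f_i(x^*),t\bar d\rangle+g_i(x^*+t\bar d)-g_i(x^*)\le-2\delta t$ for all $i$, using that $g_i(x^*+t\bar d)-g_i(x^*)\le t g_i'(x^*;\bar d)+o(t)$. Then along $\mathcal K$ we have
$$\theta_k^*\le\theta_k(x^*+t\bar d-x^k),$$
and here lies the main obstacle. $g_i$ is only lower semicontinuous, so $g_i(x^k)$ need not converge to $g_i(x^*)$, and the term $-g_i(x^k)$ could be problematic. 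I would handle this through the lower semicontinuity inequality $\liminf g_i(x^k)\ge g_i(x^*)$, which goes in the favourable direction. The remaining terms converge, with $\|x^*+t\bar d-x^k\|_{B_k}^2\to t^2\|\bar d\|^2_{B_k}\le bt^2\|\bar d\|^2$. Hence $\limsup\theta_k^*\le-2\delta t/\alpha_{\max}+\frac b2t^2\|\bar d\|^2<0$ for $t$ small.

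It remains to show $\theta_k^*\to0$ along $\mathcal K$, which is the other delicate point. I will use the optimality relation (\ref{subgrad}): the subgradient inequality at $x^k+d^k$ gives $g_{\lambda^k/\alpha^k}(x^k)-g_{\lambda^k/\alpha^k}(x^k+d^k)\ge\langle-\nabla f_{\lambda^k/\alpha^k}(x^k)-B_kd^k,-d^k\rangle$. Substituting this into $\theta_k^*=h(d^k)+\frac12\|d^k\|^2_{B_k}$ gives $\theta_k^*\ge-\frac12\|d^k\|^2_{B_k}\ge-\frac b2\|d^k\|^2\to0$. This contradicts the bound $\limsup\theta_k^*<0$ obtained above, so $x^*$ is Pareto critical.
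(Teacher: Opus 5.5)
Part (i), your key inequality $\theta_k^*\le-\frac12\|d^k\|_{B_k}^2$, and your estimate $\limsup_{k\in\mathcal K}\theta_k^*<0$ when $x^*$ is not critical are all correct. The gap is the last step, the claim that $\theta_k^*\to0$ along $\mathcal K$.

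First, the subgradient inequality you write, $g_{\lambda^k/\alpha^k}(x^k)-g_{\lambda^k/\alpha^k}(x^k+d^k)\ge\langle\nabla f_{\lambda^k/\alpha^k}(x^k),d^k\rangle+\|d^k\|_{B_k}^2$, rearranges to $h(d^k)\le-\|d^k\|_{B_k}^2$. That is only the \emph{upper} bound $\theta_k^*\le-\frac12\|d^k\|_{B_k}^2$ again, as in Lemma~\ref{ll2}. The lower bound you state has the sign flipped and is false in general: for $n=m=1$, $f_1\equiv0$, $g_1=|\cdot|$, $\alpha_1^k=B_k=1$, $x^k=\frac12$, one gets $d^k=-\frac12$ and $\theta_k^*=-\frac38<-\frac18$.

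Second, and more seriously, $\theta_k^*\to0$ cannot be deduced from $d^k\to0$ at all when the $g_i$ are merely lower semicontinuous. Take $n=2$, $m=1$, $f_1\equiv0$, $\alpha_1^k=1$, $B_k=I$, and let $g_1(x,y)=y^2/x$ for $x>0$, $g_1(0,0)=0$, and $g_1=+\infty$ otherwise. Let $x^k=(1/k^2,1/k)\to x^*=(0,0)$. Since $\mathrm{prox}_{g_1}$ is continuous and $x^*$ minimizes $g_1$, we get $d^k=\mathrm{prox}_{g_1}(x^k)-x^k\to0$. Yet $\theta_k^*=\mathcal M_{g_1}(x^k)-g_1(x^k)\to0-1=-1$, even though $x^*$ is critical. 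The term $-g_i(x^k)$ inside $\theta_k$ can stay strictly below $-g_i(x^*)$, so a negative limit of $\theta_k^*$ contradicts nothing. Lower semicontinuity controls that term from one side only. If $g_i(x^k)\to g_i(x^*)$ (e.g.\ $g_i$ continuous on its domain), your route does go through, but via $\liminf_k\theta_k^*\ge0$ from lower semicontinuity at $x^k+d^k$, not via the inequality you wrote.

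The fix is to compare values of the \emph{scalarized} problem for the fixed multiplier $\lambda^k$, where $g_{\lambda^k/\alpha^k}(x^k)$ cancels.
\begin{itemize}
\item By (\ref{dk}), $d^k$ minimizes $d\mapsto\langle\nabla f_{\lambda^k/\alpha^k}(x^k),d\rangle+g_{\lambda^k/\alpha^k}(x^k+d)+\frac12\|d\|_{B_k}^2$. Evaluate this at $d^k$ and at $x^*+t\bar d-x^k$.
\item Pass to a further subsequence with $\lambda^k\to\bar\lambda$ and $\alpha^k\to\bar\alpha$.
\item Apply lower semicontinuity to $g_{\lambda^k/\alpha^k}(x^k+d^k)$; affine minorants of the $g_i$ handle weights tending to $0$.
\end{itemize}
With $\bar w:=\bar\lambda/\bar\alpha$, so that $\sum_i\bar w_i\ge1/\alpha_{\max}$, and with your $\delta$, this yields
\[
0\le\sum_{i\in[m]}\bar w_i\big(\langle\nabla f_i(x^*),t\bar d\rangle+g_i(x^*+t\bar d)-g_i(x^*)\big)+\frac b2t^2\|\bar d\|^2\le-\frac{2\delta t}{\alpha_{\max}}+\frac b2t^2\|\bar d\|^2 ,
\]
which is negative for small $t>0$, a contradiction. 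Equivalently, let $k\to\infty$ in (\ref{subgrad}), using closedness of the subdifferential graph of a lower semicontinuous convex function (with the same care for vanishing weights). This gives $-\nabla f_{\bar w}(x^*)\in\partial g_{\bar w}(x^*)$, hence $\sum_i\bar w_iF_i'(x^*;d)\ge0$ for every $d$, so $x^*$ is Pareto critical.
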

\begin{proof}
	The assertions can be obtained by using the similar arguments as in the proof of \cite[Lemma 3.2]{CTY2023b}.
\end{proof}

\subsection{``Preconditioning'' $+$ ``Preconditioning''}
The remaining question is how to choose $\alpha^{k}$ and $B_{k}$ to preserve the
advantages of NPGMO and SPGMO. For similar issues in unconstrained MOPs, Chen et al. \cite{CTY2026} developed a ``Preconditioning'' $+$ ``Preconditioning'' strategy. Specifically, $B_{k}\approx \nabla^{2} f_{{\lambda^{k}}/{\alpha^{k}}}(x^{k})$
is a preconditioner that captures the overall geometry of the problem, while $\alpha^{k}_{i},~i\in[m]$, serve as additional diagonal preconditioners, namely $\alpha^{k}_{i}\bm I_{n},~i\in[m]$, that adapt to the local geometry of each objective function in the transformed space equipped with norm $\|\cdot\|_{B_{k}}$. Accordingly, we set $\alpha^{k}\in\mathbb{R}^{m}_{++}$ as follows:
\begin{equation}\label{alpha_k}
	\alpha^{k}_{i}=\left\{
	\begin{aligned}
		&\max\left\{\alpha_{\min},\min\left\{\frac{\langle s_{k-1},y^{k-1}_{i}\rangle}{\nm{s_{k-1}}^{2}_{B_{k}}}, \alpha_{\max}\right\}\right\}, & \langle s_{k-1},y^{k-1}_{i}\rangle&>0, \\
		&\max\left\{\alpha_{\min},\min\left\{\frac{\nm{y^{k-1}_{i}}}{\nm{B_{k}s_{k-1}}}, \alpha_{\max}\right\}\right\}, & \langle s_{k-1},y^{k-1}_{i}\rangle&<0, \\
		& \alpha_{\min}, &  \langle s_{k-1},y^{k-1}_{i}\rangle&=0.
	\end{aligned}
	\right.
\end{equation}

The preconditioned proximal Barzilai-Borwein method for MCOPs is described as follows.

\begin{algorithm} 
	\caption{Preconditioned proximal Barzilai-Borwein method for MCOPs}\label{pbb}
	\LinesNumbered  
	\KwData{$x^{0}\in\mathbb{R}^{n},~B_{0}\succ0,~\sigma,\gamma\in(0,1)$}
	{Choose $x^{-1}$ in a small neighborhood of $x^{0}$}\\
	\For{$k=0,...$}{Update $\alpha^{k}_{i}$ as (\ref{alpha_k}),\ $i\in[m]$\\
		Compute $\lambda^{k}$ a solution of (\ref{pdp})\\
	Update $d^{k}$ as (\ref{dk})\\
		\eIf{$d^{k}=0$}{ {\bf{return}} Pareto critical point $x^{k}$ }{
			Compute the stepsize $t_{k}\in(0,1]$ in the following way:
			\begin{align*}
				t_{k}:=\max\big\{\gamma^{j}:j\in\mathbb{N},~&F_{i}\left(x^{k}+\gamma^{j}d_{k}\right)-F_{i}(x^{k})\\
				&\leq \sigma\gamma^{j}(\dual{\nabla f_{i}(x^{k}),d_{k}} + g_{i}(x^{k}+ d_{k})-g_{i}(x^{k})).\big\}	
			\end{align*}
			Update $x^{k+1}:= x^{k}+t_{k}d^{k}$\\
			Update $B_{k+1}\succ0$ 
		}
	}  
\end{algorithm}

\subsection{Local superlinear convergence}
For a generic case, we further explain the choice of $B_{k}$ by the following asymptotic convergence result.
\begin{theorem}\label{sup}
	Let $x^{k+1}=x^{k}+t_{k}d^{k}$, $d^{k}$ be denoted as the minimizer of (\ref{d}),
	suppose the following assumptions hold:
	\begin{itemize}
		\item[$\mathrm{(a)}$] $\{x^{k}\}$ converges to some Pareto solution $x^{*}$ and $F(x^{*})\preceq F(x^{k})$ for all $k$,
		\item[$\mathrm{(b)}$] $t_{k}=1$ for sufficiently large $k$,
		\item[$\mathrm{(c)}$] $\left\{\sum_{i\in[m]}\frac{\lambda^{k}_{i}}{\alpha^{k}_{i}}\right\}$ is bounded,
		\item[$\mathrm{(d)}$]  $aI\preceq\lim\limits_{k\rightarrow\infty}\nabla^{2}f_{{\lambda^{k}}/{\alpha^{k}}}(x^{*})\preceq bI,~(a>0)$,
		\item[$\mathrm{(e)}$] $\lim\limits_{k\rightarrow\infty}\nabla^{2}f_{i}(x^{k}) = \nabla^{2}f_{i}(x^{*})$ for all $i\in[m]$,
		\item[$\mathrm{(f)}$]$\lim\limits_{k\rightarrow\infty}\frac{\nm{\left(B_{k}-\nabla^{2}f_{{\lambda^{k}}/{\alpha^{k}}}(x^{k})\right)s_{k}}}{\nm{s_{k}}}=0.$
	\end{itemize}
	Then, $\{x^{k}\}$ converges to $x^{*}$ superlinearly.
\end{theorem}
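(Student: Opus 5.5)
We follow the Dennis--Moré style argument used for proximal Newton methods, adapted to the scalarization $\lambda^{k}/\alpha^{k}$. By (b), for large $k$ we have $s_k=d^k=x^{k+1}-x^{k}$. Write $w^k:=\lambda^k/\alpha^k$ and $H_k:=\nabla^2 f_{w^k}(x^k)$. By (c) and compactness of $\Delta_m$, $\{w^k\}$ is bounded. By (d)--(e), for large $k$ we get $\frac{a}{2}I\preceq H_k\preceq 2bI$ and $\|H_k-\nabla^2 f_{w^k}(x^*)\|\to0$; condition (f) then gives the same uniform bounds for $B_k$ along $s_k$. The aim is to prove $\|x^{k+1}-x^*\|=o(\|x^k-x^*\|)$.

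The key step is to compare two optimality conditions.
\begin{itemize}
\item[(1)] The optimality condition (\ref{subgrad}) at $x^{k+1}=x^k+d^k$:
\[
-\nabla f_{w^k}(x^k)-B_k s_k\in\partial g_{w^k}(x^{k+1}).
\]
\item[(2)] An optimality condition at $x^*$ for the same weights $w^k$. This is where (a) enters. Since $x^*$ is Pareto and $F(x^*)\preceq F(x^k)$, and $d^k$ is a descent direction with $t_k=1$, each $F_i(x^{k+1})\le F_i(x^k)$. Moreover $x^*$ minimizes $\sum_i w^k_i F_i$ over the set $\{F\preceq F(x^k)\}$ in the limit sense. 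More concretely, I would use monotonicity of the subdifferential of the convex $g_{w^k}$ between $x^{k+1}$ and $x^*$, combined with the inequality $F_{w^k}(x^{*})\le F_{w^k}(x^{k+1})$, which follows from (a) applied at index $k+1$ because the weights are nonnegative.
\end{itemize}

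I would proceed as follows. From $F_{w^k}(x^*)\le F_{w^k}(x^{k+1})$ and convexity of $g_{w^k}$, using the subgradient from (1),
\[
g_{w^k}(x^*)\ge g_{w^k}(x^{k+1})+\langle -\nabla f_{w^k}(x^k)-B_ks_k,\,x^*-x^{k+1}\rangle .
\]
Substituting into $f_{w^k}(x^*)+g_{w^k}(x^*)\le f_{w^k}(x^{k+1})+g_{w^k}(x^{k+1})$ gives
\[
f_{w^k}(x^*)-f_{w^k}(x^{k+1})+\langle \nabla f_{w^k}(x^k)+B_ks_k,\,x^{k+1}-x^*\rangle\le0 .
\]
Next, Taylor-expand $f_{w^k}$ around $x^k$ using (e) and boundedness of $w^k$. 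This gives $\nabla f_{w^k}(x^k)+H_ks_k=\nabla f_{w^k}(x^{k+1})+o(\|s_k\|)$ and
\[
f_{w^k}(x^*)-f_{w^k}(x^{k+1})-\langle\nabla f_{w^k}(x^{k+1}),x^*-x^{k+1}\rangle\ge \tfrac{a}{4}\|x^*-x^{k+1}\|^2-o(\|x^{k+1}-x^*\|^2),
\]
using local strong convexity from (d). Replacing $B_ks_k$ by $H_ks_k+o(\|s_k\|)$ via (f), we obtain
\[
\tfrac a4\|x^{k+1}-x^*\|^2\le o(\|s_k\|)\,\|x^{k+1}-x^*\|+o(\|x^{k+1}-x^*\|^2),
\]
hence $\|x^{k+1}-x^*\|=o(\|s_k\|)$. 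Finally, since $\|s_k\|\le\|x^{k+1}-x^*\|+\|x^k-x^*\|$, we get $\|x^{k+1}-x^*\|=o(\|x^{k}-x^*\|)$, which is superlinear convergence.

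The main obstacle I expect is the strong-convexity lower bound. Assumption (d) is stated at $x^*$ for the limiting weights, so passing to $f_{w^k}$ near $x^*$ along the segment $[x^{k+1},x^*]$ requires (e) to hold uniformly on a neighborhood, i.e. continuity of the Hessians near $x^*$, together with boundedness of $w^k$ from (c). I would handle this with an integral form of Taylor's theorem,
\[
\nabla f_{w^k}(x^{k+1})-\nabla f_{w^k}(x^k)=\int_0^1\nabla^2 f_{w^k}(x^k+\tau s_k)s_k\,d\tau,
\]
and bound the deviation from $H_ks_k$ by $\sup_\tau\|\nabla^2 f_{w^k}(x^k+\tau s_k)-\nabla^2 f_{w^k}(x^*)\|\,\|s_k\|=o(\|s_k\|)$. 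This is the step where assumptions (c)--(e) must be interpreted as continuity of $\nabla^2 f_i$ near $x^*$, with a possible extraction of a convergent subsequence of $w^k$ to make the limit in (d) meaningful.
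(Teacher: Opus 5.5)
Your proposal is correct and is essentially the paper's proof: both combine $F_{\lambda^k/\alpha^k}(x^*)\le F_{\lambda^k/\alpha^k}(x^{k+1})$ (from (a) with nonnegative weights) with the subgradient inequality from (\ref{subgrad}) at $x^{k+1}$, second-order Taylor expansions, the Dennis--Mor\'e condition (f), and the positive definiteness in (d) to get $\|x^{k+1}-x^*\|=o(\|s_k\|)$, and then conclude via $\|s_k\|\le\|x^{k+1}-x^*\|+\|x^k-x^*\|$. The only difference is bookkeeping: the paper expands both $f_i(x^{k+1})$ and $f_i(x^*)$ about $x^k$ with mean-value Hessians at $\bar x^k_1,\bar x^k_2$, whereas you expand $f_{\lambda^k/\alpha^k}(x^*)$ about $x^{k+1}$. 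Like the paper, you correctly rely on continuity of $\nabla^2 f_i$ near $x^*$ rather than on (e) alone along the iterates.
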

\begin{proof}
		Given the twice continuity of $f_{i}$, we use Newton-Leibniz formula to get
		\begin{equation}\label{eqn}
			f_{i}(b)-f_{i}(a)=\dual{\int_{0}^{1}\nabla f_{i}(a+t(b-a))dt,b-a}.
		\end{equation}
		Again using the Newton-Leibniz formula for the average gradient, we have
		$$\int_{0}^{1}(\nabla f_{i}(a+t(b-a))-\nabla f_{i}(a))dt=\int_{0}^{1}\int_{0}^{1}\nabla^{2}f_{i}(a+st(b-a))ds(t(b-a))dt.$$
		Plugging this into (\ref{eqn}) gives 
			\begin{equation}\label{eqnn}
				\begin{aligned}
					f_{i}(b)-f_{i}(a)&=\dual{\nabla f_{i}(a),b-a}+\dual{b-a,\int_{0}^{1}\int_{0}^{1}\nabla^{2}f_{i}(a+st(b-a))ds(t(b-a))dt}.
				\end{aligned}
			\end{equation}
		
		By substituting $b=x^{k+1},~a=x^{k}$ and $b=x^{*},~a=x^{k}$ into (\ref{eqnn}), respectively, we have
	\begin{align*}
		0&\leq F_{i}(x^{k+1}) - F_{i}(x^{*})\\
		&= (f_{i}(x^{k+1})-f_{i}(x^{k})) - (f_{i}(x^*)-f_{i}(x^{k}))+g_{i}(x^{k+1})-g_{i}(x^*)\\
		&=\dual{\nabla f_{i}(x^{k}),x^{k+1}-x^{k}}+\dual{x^{k+1}-x^{k},\int_{0}^{1}\int_{0}^{1}\nabla^{2}f_{i}(x^{k}+st(x^{k+1}-x^{k}))ds(t(x^{k+1}-x^{k}))dt}\\
		&~~~~+\dual{\nabla f_{i}(x^{k}),x^{k}-x^*}-\dual{x^*-x^{k},\int_{0}^{1}\int_{0}^{1}\nabla^{2}f_{i}(x^{k}+st(x^*-x^{k}))ds(t(x^*-x^{k}))dt}\\
		&~~~~+g_{i}(x^{k+1})-g_{i}(x^*)\\
		&=\dual{\nabla f_{i}(x^{k}),x^{k+1}-x^*}+\dual{x^{k+1}-x^{k},\int_{0}^{1}\int_{0}^{1}\nabla^{2}f_{i}(x^{k}+st(x^{k+1}-x^{k}))ds(t(x^{k+1}-x^{k}))dt}\\
		&~~~~-\dual{x^*-x^{k},\int_{0}^{1}\int_{0}^{1}\nabla^{2}f_{i}(x^{k}+st(x^*-x^{k}))ds(t(x^*-x^{k}))dt}+g_{i}(x^{k+1})-g_{i}(x^*).
	\end{align*}
On the other hand, from (\ref{subgrad}), we have $$-\nabla f_{{\lambda^{k}}/{\alpha^{k}}}(x^{k}) - B_{k}d^{k}\in \partial g_{{\lambda^{k}}/{\alpha^{k}}}(x^k+d^k).$$ This, together with the fact that $t_k=1$, implies
\begin{align*}
	g_{{\lambda^{k}}/{\alpha^{k}}}(x^{k+1})-g_{{\lambda^{k}}/{\alpha^{k}}}(x^*)&\leq\dual{-\nabla f_{{\lambda^{k}}/{\alpha^{k}}}(x^{k})-B_{k}d^{k},x^{k+1}-x^*}\\
	&=\dual{-\nabla f_{{\lambda^{k}}/{\alpha^{k}}}(x^{k})-B_k(x^{k+1}-x^{k}),x^{k+1}-x^*}.
\end{align*}
By substituting the preceding relation, we have
	\begin{align*}
	0&\leq\dual{-B_{k}(x^{k+1}-x^{k}),x^{k+1}-x^{*}}\\
		&~~~~+\dual{x^{k+1}-x^{k},\int_{0}^{1}\int_{0}^{1}\nabla^{2}f_{{\lambda^{k}}/{\alpha^{k}}}(x^{k}+st(x^{k+1}-x^{k}))ds(t(x^{k+1}-x^{k}))dt}\\
		&~~~~-\dual{x^*-x^{k},\int_{0}^{1}\int_{0}^{1}\nabla^{2}f_{{\lambda^{k}}/{\alpha^{k}}}(x^{k}+st(x^*-x^{k}))ds(t(x^*-x^{k}))dt}.
	\end{align*}	
Then there exist $\bar{x}^{k}_{1}\in[x^{k},x^{k+1}]$ (line segment between $x^{k}$ and $x^{k+1}$) and $\bar{x}^{k}_{2}\in[x^{k},x^{*}]$ such that 

		\begin{align*}
			0&\leq\dual{B_{k}(x^{k}-x^{k+1}),x^{k+1}-x^{*}}+\frac{1}{2}\nm{x^{k+1}-x^{k}}^{2}_{\nabla^{2}f_{{\lambda^{k}}/{\alpha^{k}}}(\bar{x}^{k}_{1})}-\frac{1}{2}\nm{x^{k}-x^{*}}^{2}_{\nabla^{2}f_{{\lambda^{k}}/{\alpha^{k}}}(\bar{x}^{k}_{2})}\\
			&=\dual{B_{k}(x^{k}-x^{k+1}),x^{k+1}-x^{*}}+\frac{1}{2}\nm{x^{k+1}-x^{k}}^{2}_{\nabla^{2}f_{{\lambda^{k}}/{\alpha^{k}}}(\bar{x}^{k}_{1})}-\frac{1}{2}\nm{x^{k}-x^{k+1}+x^{k+1}-x^{*}}^{2}_{\nabla^{2}f_{{\lambda^{k}}/{\alpha^{k}}}(\bar{x}^{k}_{2})}\\
			&=\dual{\left(B_{k}-\nabla^{2}f_{{\lambda^{k}}/{\alpha^{k}}}(\bar{x}^{k}_{2})\right)(x^{k}-x^{k+1}),x^{k+1}-x^{*}}+\frac{1}{2}\nm{x^{k+1}-x^{k}}^{2}_{\nabla^{2}f_{{\lambda^{k}}/{\alpha^{k}}}(\bar{x}^{k}_{1})}\\
			&~~~~-\frac{1}{2}\nm{x^{k+1}-x^{k}}^{2}_{\nabla^{2}f_{{\lambda^{k}}/{\alpha^{k}}}(\bar{x}^{k}_{2})}-\frac{1}{2}\nm{x^{k+1}-x^{*}}^{2}_{\nabla^{2}f_{{\lambda^{k}}/{\alpha^{k}}}(\bar{x}^{k}_{2})},
		\end{align*}
	 Without loss of generality, for any $\epsilon>0$, there exists $k_{\epsilon}$ such that, for all $k\geq k_{\epsilon}$ and $j\in\{1,2\}$,
	\begin{equation}\label{e39}
		\nm{\left(\nabla^{2}f_{{\lambda^{k}}/{\alpha^{k}}}(\bar{x}^{k}_{j})-B_{k}\right)s_{k}}\overset{}{\leq}\epsilon\|s_{k}\|,
	\end{equation}
	and
	\begin{equation}\label{e40}
		\nm{\left(\nabla^{2}f_{{\lambda^{k}}/{\alpha^{k}}}(\bar{x}^{k}_{1})-\nabla^{2}f_{{\lambda^{k}}/{\alpha^{k}}}(\bar{x}^{k}_{2})\right)s_{k}}\leq\epsilon\|s_{k}\|,
	\end{equation}
	where (\ref{e39}) is given by assumptions (c), (e) and (f), (\ref{e40}) follows by (c) and (e).
	Then, we use relations (\ref{e39}) and (\ref{e40}) to get
	\begin{equation}\label{e41}
		\begin{aligned}
			&~~~~\nm{x^{k+1}-x^{*}}^{2}_{\nabla^{2}f_{{\lambda^{k}}/{\alpha^{k}}}(\bar{x}^{k}_{2})}\\
			&\leq2\dual{\left(B_{k}-\nabla^{2}f_{{\lambda^{k}}/{\alpha^{k}}}(\bar{x}^{k}_{2})\right)(x^{k}-x^{k+1}),x^{k+1}-x^{*}}\\
			&~~~~+\dual{\left(\nabla^{2}f_{{\lambda^{k}}/{\alpha^{k}}}(\bar{x}^{k}_{1})-\nabla^{2}f_{{\lambda^{k}}/{\alpha^{k}}}(\bar{x}^{k}_{2})\right)(x^{k+1}-x^{k}),x^{k+1}-x^{k}}\\
			&\leq2\epsilon\nm{s_{k}}\nm{x^{k+1}-x^{*}} + \epsilon\nm{s_{k}}^{2}.
		\end{aligned}
	\end{equation}
	On the other hand, by assumptions (c), (d) and (e), we have
	$$\nm{x^{k+1}-x^{*}}^{2}_{\nabla^{2}f_{{\lambda^{k}}/{\alpha^{k}}}(\bar{x}^{k}_{2})}\geq a\nm{x^{k+1}-x^{*}}^{2}.$$
	Rearranging and substituting the above relation into (\ref{e41}), we obtain
	$$a\nm{x^{k+1}-x^{*}}^{2}-2\epsilon\nm{x^{k+1}-x^{*}}\nm{s_{k}}-\epsilon\nm{s_{k}}^{2}\leq0.$$
	Dividing by $\nm{s_{k}}^{2}$, it is easy to get $$\frac{\nm{x^{k+1}-x^{*}}}{\nm{s_{k}}}\in\left[\frac{\epsilon-\sqrt{\epsilon^{2}+a\epsilon}}{a},\frac{\epsilon+\sqrt{\epsilon^{2}+a\epsilon}}{a}\right].$$
	{Being $\epsilon>0$ arbitrary}, it follows that
	$$\lim\limits_{k\rightarrow\infty}\frac{\nm{x^{k+1}-x^{*}}}{\nm{s_{k}}}=0.$$
	Notice that $\|s_{k}\|\leq\nm{x^{k+1}-x^{*}}+\nm{x^{k}-x^{*}}$, then
	$$0\leq\lim\limits_{k\rightarrow\infty}\frac{\nm{x^{k+1}-x^{*}}}{\nm{x^{k+1}-x^{*}}+\nm{x^{k}-x^{*}}}\leq\lim\limits_{k\rightarrow\infty}\frac{\nm{x^{k+1}-x^{*}}}{\nm{s_{k}}}=0.$$
	{It follows that
		$$\lim\limits_{k\rightarrow\infty}\frac{\nm{x^{k+1}-x^{*}}}{\nm{x^{k+1}-x^{*}}+\nm{x^{k}-x^{*}}}=0.$$
	Dividing by $\nm{x^{k}-x^{*}}$, we have
$$\lim\limits_{k\rightarrow\infty}\frac{\frac{\nm{x^{k+1}-x^{*}}}{\nm{x^{k}-x^{*}}}}{\frac{\nm{x^{k+1}-x^{*}}}{\nm{x^{k}-x^{*}}}+1}=0.$$}
Therefore,	
	$$\lim\limits_{k\rightarrow\infty}\frac{\nm{x^{k+1}-x^{*}}}{\nm{x^{k}-x^{*}}}=0,$$
	and hence the rate of convergence is superlinear.
\end{proof}
\begin{remark}
	The proposed method enjoys fast asymptotic convergence comparable to that of proximal quasi-Newton methods \cite{PRY2024}. However, it has two notable advantages. First, it only requires constructing a single quasi-Newton matrix at each iteration, while proximal quasi-Newton methods generally construct multiple quasi-Newton matrices, one for each objective. Second, the fast local convergence of proximal quasi-Newton methods usually requires the Hessian matrices of all objective functions to be positive definite at the optimal solution. By contrast, the fast asymptotic convergence of the proposed method is guaranteed under the weaker assumption that the aggregated Hessian matrix is positive definite; see assumption (d) in Theorem \ref{sup}.
\end{remark}

\section{Subspace framework of preconditioned proximal Barzilai-Borwein method}\label{sec4}
When $B_{k}$ is non-diagonal, the proximal operator $$\mathrm{prox}^{B_{k}}_{ g_{{\lambda^{k}}/{\alpha^{k}}}}\left(x^{k}-B_k^{-1}\nabla f_{{\lambda^{k}}/{\alpha^{k}}}(x^{k})\right)$$
is generally not available in closed form.

To further enhance the performance of the algorithm, we incorporate a subspace acceleration mechanism. Within this framework, two key questions naturally arise. The first concerns how to construct a subspace that effectively captures useful curvature and descent information from past iterates. The second concerns how to design an efficient subspace model so that the resulting subproblem can be solved rapidly while maintaining good approximation quality. These issues will be addressed in the following subsections.
\subsection{Selection of subspace}
To exploit historical information while keeping the computational cost low, we construct a low-dimensional subspace that captures useful search directions from recent iterations. In particular, for $k>1$ we define the two-dimensional subspace

$$\mathcal{L}_{k}=\mathtt{span}\{v_{k},u_k\},$$
where $v_k$ represents the scaled proximal gradient direction (\ref{bbk}) and $u_k$ incorporates information from the previous step. Specifically, the direction $u_k$ is defined as 

\begin{equation}\label{uk}
	u_{k}:=\mathrm{\Pi}_{\cap_{i\in[m]}\mathrm{dom}(g_{i})}(x^{k}+s_{k-1})-x^{k}.
\end{equation}

\begin{remark}
	If $\mathrm{dom}(g_i)=\mathbb{R}^{n}$, then $u_{k}=s_{k-1}$. When $\mathrm{dom}(g_i)\neq\mathbb{R}^{n}$, as in constrained optimization problems, the direction $s_{k-1}$ may become infeasible at the point $x^{k}$. In this case, we set
	$$\mathrm{\Pi}_{\cap_{i\in[m]}\mathrm{dom}(g_{i})}(x^{k}+s_{k-1})-x^{k}.$$ 
\end{remark}
\subsection{Selection of approximate model}
Having constructed the subspace $\mathcal{L}_k$, we next define the corresponding subspace model used to refine the search direction. 
Restricting the step to $\mathcal{L}_k$, we consider the following subspace preconditioned proximal Barzilai-Borwein subproblem:
\begin{equation}\label{sn}
	\begin{aligned}
		\min_{d\in \mathcal{L}_{k}}\max\limits_{i\in[m]}\left\{\frac{
			\left\langle\nabla f_{i}(x^{k}),d\right\rangle + g_{i}(x^{k}+d)-g_{i}(x^{k})}{\alpha_{i}^{k}}+\frac{1}{2}\|d\|_{B_{k}}^{2}\right\},     
	\end{aligned}
\end{equation}
where $\alpha^{k}$ is defined as (\ref{alpha_k}) and $B_{k}\approx \nabla^{2} f_{{\lambda^{k}}/{\alpha^{k}}}(x^{k})$.
Since $\mathcal{L}_k=\mathrm{span}\{v_k,u_k\}$ is two-dimensional, any $d\in\mathcal{L}_k$ can be written as $d=G_kz$, where $G_k=[v_k,u_k]$ and $z\in\mathbb{R}^2$. Substituting this representation into \eqref{sn} yields the equivalent two-dimensional optimization  problem
\begin{equation}\label{subsp}
	\begin{aligned}
		\min_{z\in\mathbb{R}^{2}}\max_{i\in[m]}\frac{\left\langle\nabla f_{i}(x^{k}),G_{k}z\right\rangle+g_{i}(x^{k}+G_{k}z)-g_{i}(x^{k})}{\alpha^{k}_{i}}+\frac{1}{2}\|z\|^{2}_{Q_{k}}   ,
	\end{aligned}
\end{equation}
where $Q_{k}=\begin{bmatrix}
	\dual{v_{k},B_{k}v_{k}}   & &~ & \dual{v_{k},B_{k}u_{k}} \\
	
	\dual{v_{k},B_{k}u_{k}}& &~  &\dual{u_{k},B_{k}u_{k}}
\end{bmatrix}$.
\subsection{Decomposition of subspace subproblem}
Although problem \eqref{subsp} is only two-dimensional, obtaining its exact solution may still be nontrivial due to the presence of the nonsmooth terms $g_{i}(x^k+G_k\alpha),~i\in[m]$. In many practical situations, computing the exact minimizer is unnecessary and may introduce additional computational overhead. Therefore, instead of solving \eqref{subsp} exactly, we aim to construct an efficient approximation of the minimizer.

To this end, we exploit the structure of the subspace model and perform optimization along carefully chosen directions. In particular, by transforming the basis of the subspace into a conjugate basis with respect to the $B_k$-inner product, the quadratic term becomes diagonal, which enables efficient alternating one-dimensional optimization.

Specifically, we orthogonalize $u_k$ with respect to $v_k$ under the $B_k$-inner product and define
\begin{equation}
	\tilde{u}_k = u_{k} - \frac{\dual{u_{k},B_kv_k}}{\dual{v_{k},B_kv_k}}  v_k.
\end{equation}

With this construction we have $v_k^{\top}B_k\tilde{u}_k=0$. Consequently, problem \eqref{subsp} can be rewritten in the equivalent form
\begin{equation}\label{subss}
	\begin{aligned}
		\min_{z\in\mathbb{R}^{2}}\max_{i\in[m]}\frac{\left\langle\nabla f_{i}(x^{k}),\tilde{G}_{k}z\right\rangle+g_{i}(x^{k}+\tilde{G}_{k}z)-g_{i}(x^{k})}{\alpha^{k}_{i}}+\frac{1}{2}\|z\|^{2}_{\tilde{Q}_{k}}   ,
	\end{aligned}
\end{equation}
where $\tilde{G}_{k}=[v_{k},\tilde{u}_{k}]$ and $\tilde{Q}_{k}=\begin{bmatrix}
	\dual{v_{k},B_{k}v_{k}}   & &~ & 0 \\
	
	0& &~  &\dual{\tilde{u}_{k},B_{k}\tilde{u}_{k}}
\end{bmatrix}$.
Instead of solving \eqref{subss} directly, 
we consider the following subproblem 
\begin{equation}\label{subsss}
	\begin{aligned}
		\min_{z\in\mathbb{R}^{2}}\max_{i\in[m]}\frac{\left\langle\nabla f_{i}(x^{k}),\tilde{G}_{k}z\right\rangle+(g_{i}(x^{k}+z_{1}v_{k})-g_{i}(x^{k}))+(g_{i}(x^{k}+z_{2}\tilde{u}_{k})-g_{i}(x^{k}))}{\alpha^{k}_{i}}+\frac{1}{2}\|z\|^{2}_{\tilde{Q}_{k}}   ,
	\end{aligned}
\end{equation}

The remaining question is how to solve (\ref{subsss}). We first write the duality of (\ref{subsss}) as follows
\begin{align*}\tag{D}\label{D}
	-&\min\limits_{\lambda}\omega_{S}^{k}(\lambda)\\
	&\mathrm{ s.t.} \ \lambda\in\Delta_{m},
\end{align*}
where 
\begin{equation}\label{ome}
\omega_{S}^{k}(\lambda):=-\min_{z\in\mathbb{R}^{2}}\dual{\frac{\lambda}{\alpha^{k}},Jf(x^{k})\tilde{G}_{k}z+g(x^{k}+z_{1}v_{k})+g(x^{k}+z_{2}\tilde{u}_{k})-2g(x^{k})}+\frac{1}{2}\|z\|^{2}_{\tilde{Q}_{k}}.	
\end{equation}
By Danskin's theorem, the gradient of $\omega_{S}^{k}$ can be written as follows:
\begin{align*}
	\nabla\omega_{S}^{k}(\lambda)&=\frac{2g(x^{k})-Jf(x^{k})\tilde{G}_{k}z(\lambda)-g(x^{k}+z_{1}(\lambda)v_{k})-g(x^{k}+z_{2}(\lambda)\tilde{u}_{k})}{\alpha^k},
\end{align*}
where $z_{1}(\lambda)$ and $z_{2}(\lambda)$ are the optimal solutions to the following one-dimensional subproblems, respectively,
\begin{equation}\label{subss1}
	\begin{aligned}
		\min_{z_{1}\in\mathbb{R}}\dual{\frac{\lambda}{\alpha^{k}},z_{1}Jf(x^{k})v_{k}+g(x^{k}+z_{1}v_{k})}+\frac{1}{2}\dual{v_{k},B_{k}v_{k}}(z_{1})^{2}.
	\end{aligned}
\end{equation}
and
\begin{equation}\label{subss2}
	\begin{aligned}
		\min_{z_{2}\in\mathbb{R}}\dual{\frac{\lambda}{\alpha^{k}},z_{2}Jf(x^{k})\tilde{u}_{k}+g(x^{k}+z_{2}\tilde{u}_{k})}+\frac{1}{2}\dual{\tilde{u}_{k},B_{k}\tilde{u}_{k}}(z_{2})^{2}.
	\end{aligned}
\end{equation}
It remains to compute the Hessian--vector products $B_{k}v_{k}$, $B_{k}\tilde{u}_{k}$ and $B_{k}s_{k-1}$ in (\ref{alpha_k}). 
To avoid explicitly forming the Hessian matrix, we approximate the Hessian--vector products using finite differences of gradients. Specifically, we use
\begin{equation}\label{fd}
	B_{k}v\approx \frac{1}{\epsilon}(\nabla f_{\lambda^{k-1}/\alpha^{k-1}}(x^{k}+\epsilon v)-\nabla f_{\lambda^{k-1}/\alpha^{k-1}}(x^{k})).
\end{equation}
Based on this approximation, the one-dimensional subproblems \eqref{subss1} and \eqref{subss2} can be reformulated as
\begin{equation}\label{subs1}
	\begin{aligned}
		\min_{z_{1}\in\mathbb{R}}\dual{\frac{\lambda}{\alpha^{k}},z_{1}Jf(x^{k})v_{k}+g(x^{k}+z_{1}v_{k})}+\frac{q_{k}(v_{k})}{2}\nm{v_{k}}^{2}(z_{1})^{2}.
	\end{aligned}
\end{equation}
and 
\begin{equation}\label{subs2}
	\begin{aligned}
		\min_{z_{2}\in\mathbb{R}}\dual{\frac{\lambda}{\alpha^{k}},z_{2}Jf(x^{k})\tilde{u}_{k}+g(x^{k}+z_{2}\tilde{u}_{k})}+\frac{q_{k}(\tilde{u}_{k})}{2}\nm{\tilde{u}_{k}}^{2}(z_{2})^{2}.
	\end{aligned}
\end{equation}
where $\alpha^{k}\in\mathbb{R}^{m}_{++}$ is rewritten as follows:
\begin{equation}\label{alpha_k1}
	\alpha^{k}_{i}=\left\{
	\begin{aligned}
		&\max\left\{\alpha_{\min},\min\left\{\frac{\langle s_{k-1},y^{k-1}_{i}\rangle}{q_{k}(s_{k-1})\nm{s_{k-1}}^{2}}, \alpha_{\max}\right\}\right\}, & {\langle s_{k-1},y^{k-1}_{i}\rangle}&>0, \\
		&\max\left\{\alpha_{\min},\min\left\{\frac{\nm{y^{k-1}_{i}}}{q_{k}(s_{k-1})\nm{s_{k-1}}}, \alpha_{\max}\right\}\right\}, &{\langle s_{k-1},y^{k-1}_{i}\rangle}&<0, \\
		& \alpha_{\min}, &  \langle s_{k-1},y^{k-1}_{i}\rangle&=0,
	\end{aligned}
	\right.
\end{equation}
$q_{k}(v)\approx \dual{v,B_kv}/\nm{v}^{2},~v\in\{v_{k},\tilde{u}_{k},s_{k-1}\}$.
\par Denote $z_{1}(\lambda^{k})$ and $z_{2}(\lambda^{k})$ are the minimizers of (\ref{subs1}) and (\ref{subs2}) with $\lambda = \lambda^{k}$, respectively. Therefore, the subspace preconditioned proximal Barzilai-Borwein direction can be expressed as $$d^{k}_{S}=z_{1}(\lambda^{k})v_{k}+z_{2}(\lambda^{k})\tilde{u}_{k},$$
where $\lambda^{k}$ is a minimizer of (\ref{D}) with $\alpha^{k}$ defined as (\ref{alpha_k1}) and 
$$\tilde{Q}_{k}=\begin{bmatrix}
	q_{k}(v_{k})\nm{v_{k}}^{2}   & &~ & 0 \\
	
	0& &~  &q_{k}(\tilde{u}_{k})\nm{\tilde{u}_{k}}^{2}
\end{bmatrix}.$$
\subsection{Further discussion on approximate model}\label{sec4.4}
Consider the subspace proximal Newton subproblem:
\begin{equation}\label{subn}
	\begin{aligned}
		\min_{d\in \mathcal{L}_{k}}\max\limits_{i\in[m]}\left\{{
			\left\langle\nabla f_{i}(x^{k}),d\right\rangle + g_{i}(x^{k}+d)-g_{i}(x^{k})}+\frac{1}{2}\|d\|_{\nabla^2f_{i}(x^k)}^{2}\right\}.   
	\end{aligned}
\end{equation}
We can write the equivalent two-dimensional optimization  problem
\begin{equation}
	\begin{aligned}
		\min_{z\in \mathbb{R}^{2}}\max\limits_{\lambda\in\Delta_{m}}\left\{{
			\left\langle\nabla f_{\lambda}(x^{k}),G_{k}z\right\rangle + g_{\lambda}(x^{k}+G_{k}z)-g_{\lambda}(x^{k})}+\frac{1}{2}\|z\|_{G_k^{\top}\nabla^2f_{\lambda}(x^k)G_{k}}^{2}\right\}.   
	\end{aligned}
\end{equation}
It is worth noting that $G_k^{\top}\nabla^2 f_{\lambda}(x^k)G_k$
varies with $\lambda$. Hence, it cannot be used to construct a fixed
conjugate basis.
\begin{remark}
	The conjugate transformation of basis plays a crucial role in decomposing and simplifying
	the subproblem. In our setting, this transformation is determined by the matrix
	$B_k$, which highlights the role of the ``preconditioning'' $+$ ``preconditioning'' mechanism in multiobjective composite optimization.
	Specifically, a single common preconditioning matrix $B_k$ is used to capture
	the curvature information shared by all objectives. In contrast, the proximal
	Newton method employs objective-specific preconditioning matrices, typically
	$\nabla^2 f_i(x^k)$, to capture the curvature information of each objective
	separately. Consequently, there is generally no single known matrix
	that can be used to obtain an analogous decomposition. 
\end{remark}
\subsection{Extension to multiobjective composite optimization with linear operator}
Consider the multiobjective composite optimization problem involving a linear operator:
\begin{align*}
	\min_{x\in\mathbb{R}^{n}} F^{A}(x):=
	\big(f_{1}(x)+g_{1}(Ax),\ldots,f_{m}(x)+g_{m}(Ax)\big)^{\top},\tag{MCOP$_A$}\label{MCOPA}
\end{align*}
where $A\in\mathbb{R}^{p\times n}$. When $A=I$, problem (\ref{MCOPA}) reduces to (\ref{MCOP}).
\subsubsection{The subspace subproblems}
Let $g=\tilde{g}:=g\circ A$. Then, after replacing
$g$ by $\tilde{g}$, the analysis developed for the subspace method remains
valid. Accordingly, the subproblems~(\ref{subs1}) and~(\ref{subs2}) become
\begin{equation}\label{subsss1}
	\begin{aligned}
		\min_{z_{1}\in\mathbb{R}}\quad
		&\dual{\frac{\lambda}{\alpha^{k}},
			z_{1}Jf(x^{k})v_{k}
			+g(Ax^{k}+z_{1}Av_{k})}
		+\frac{q_{k}(v_{k})}{2}\nm{v_{k}}^{2}z_{1}^{2},
	\end{aligned}
\end{equation}
and
\begin{equation}\label{subsss2}
	\begin{aligned}
		\min_{z_{2}\in\mathbb{R}}\quad
		&\dual{\frac{\lambda}{\alpha^{k}},
			z_{2}Jf(x^{k})\tilde{u}_{k}
			+g(Ax^{k}+z_{2}A\tilde{u}_{k})}
		+\frac{q_{k}(\tilde{u}_{k})}{2}\nm{\tilde{u}_{k}}^{2}z_{2}^{2}.
	\end{aligned}
\end{equation}
The additional computational burden introduced by the linear operator $A$ is
therefore reduced to the matrix-vector products $Av_{k}$ and
$A\tilde{u}_{k}$ in~(\ref{subsss1}) and~(\ref{subsss2}), respectively.
\subsubsection{Linear-operator-aware preconditioning}
 It remains to determine how to compute a descent direction for constructing
 the low-dimensional subspace. Motivated by Chen and Yang~\cite{CY2026}, we introduce
 the following linear-operator-aware preconditioned proximal Barzilai-Borwein
 subproblem:
 \begin{equation}\label{ppd}
 	\min\limits_{x\in\mathbb{R}^{n}}\max\limits_{i\in[m]}\left\{\frac{
 		\left\langle\nabla f_{i}(x^{k}),x-x^k\right\rangle + g_{i}(Ax)-g_{i}(Ax^{k})}{\tilde{\alpha}_{i}^{k}}+\frac{1}{2}\|x-x^k\|_{P}^{2}\right\},
 \end{equation}
 where $\tilde{\alpha}^{k}\in\mathbb{R}^{m}_{++}$ is as follows:
 \begin{equation}\label{talpha}
 	\tilde{\alpha}^{k}_{i}=\left\{
 	\begin{aligned}
 		&\max\left\{\alpha_{\min},\min\left\{\frac{\langle s_{k-1},y^{k-1}_{i}\rangle}{\nm{s_{k-1}}^{2}_{P}}, \alpha_{\max}\right\}\right\}, & \langle s_{k-1},y^{k-1}_{i}\rangle&>0, \\
 		&\max\left\{\alpha_{\min},\min\left\{\frac{\nm{y^{k-1}_{i}}}{\nm{Ps_{k-1}}}, \alpha_{\max}\right\}\right\}, & \langle s_{k-1},y^{k-1}_{i}\rangle&<0, \\
 		& \alpha_{\min}, &  \langle s_{k-1},y^{k-1}_{i}\rangle&=0.
 	\end{aligned}
 	\right.
 \end{equation}
 The subproblem can be rewritten as 
 \begin{equation}\label{ppbb}
 	\min\limits_{x\in\mathbb{R}^{n}}\max\limits_{\lambda\in\Delta_{m}}\left\{{
 		\left\langle\nabla f_{\lambda/\tilde{\alpha}^{k}}(x^{k}),x-x^k\right\rangle + g_{\lambda/\tilde{\alpha}^{k}}(Ax)-g_{\lambda/\tilde{\alpha}^{k}}(Ax^{k})}+\frac{1}{2}\|x-x^k\|_{P}^{2}\right\}.
 \end{equation}
 By Sion's minimax theorem, the dual problem can be expressed as follows:
 \begin{align*}\tag{DP}\label{DP}
 	-&\min\limits_{\lambda}\omega_{P}^{k}(\lambda)\\
 	&\mathrm{ s.t.} \ \lambda\in\Delta_{m},
 \end{align*}
 where 
 \begin{equation}\label{e37}
 	\omega_{P}^{k}(\lambda):=-\min\limits_{x\in\mathbb{R}^{n}}\left\{{
 		\left\langle\nabla f_{\lambda/\tilde{\alpha}^{k}}(x^{k}),x-x^k\right\rangle + g_{\lambda/\tilde{\alpha}^{k}}(Ax)-g_{\lambda/\tilde{\alpha}^{k}}(Ax^{k})}+\frac{1}{2}\|x-x^k\|_{P}^{2}\right\}.	
 \end{equation}
 By Danskin's theorem, the gradient of $\omega_{P}^{k}$ can be written as follows:
 \begin{align*}
 	\nabla\omega_{P}^{k}(\lambda)&=\frac{g(Ax^{k})-g(Ax(\lambda))-Jf(x^{k})(x(\lambda)-x^{k})}{\tilde{\alpha}^k},
 \end{align*}
 where 
 \begin{equation}\label{xlam}
 	x(\lambda):=\mathrm{prox}^{P}_{{ g\circ A_{{\lambda}/{\tilde\alpha^{k}}}}}\left(x^{k}-P^{-1}\nabla f_{{\lambda}/{\tilde\alpha^{k}}}(x^{k})\right),
 \end{equation}
 is the optimal solution of (\ref{e37}). 
 \par Next, we revisit the construction of $P$ such that (\ref{e37}) admits a closed form solution. To analyze this problem, we introduce its saddle-point formulation
 $$\min_{x\in\mathbb{R}^{n}}\max_{y\in\mathbb{R}^{p}}\dual{\nabla f_{\lambda/\tilde{\alpha}^{k}}(x^{k}),x-x^{k}}+\frac{1}{2}\|x-x^{k}\|^{2}_{P}+\dual{y, Ax}-g_{\lambda/\tilde{\alpha}^{k}}^{*}(y),$$
 where $g^*$ is the convex conjugate of $g$. By the minimax theorem, the problem can be equivalently written as
 \begin{align*}
 	\max_{y\in\mathbb{R}^{p}}\min_{x\in\mathbb{R}^{n}}\dual{\nabla f_{\lambda/\tilde{\alpha}^{k}}(x^{k}),x-x^{k}}+\frac{1}{2}\|x-x^{k}\|^{2}_{P}+\dual{y, Ax}-g_{\lambda/\tilde{\alpha}^{k}}^{*}(y).
 \end{align*}
Recall that $x(\lambda)$ is the optimal solution to (\ref{e37}). 
 The optimality condition with respect to $x$ yields
 \begin{equation}\label{x}
 	x(\lambda)=x^{k}-P^{-1}(\nabla f_{\lambda/\tilde{\alpha}^{k}}(x^{k})+A^{\top}y^{k}),
 \end{equation}
 where $y^{k}$ is the optimal solution of the following dual problem:
 \begin{equation}\label{dual}
 	\min_{y\in\mathbb{R}^{p}}\frac{1}{2}\nm{y}^{2}_{AP^{-1}A}+g_{\lambda/\tilde{\alpha}^{k}}^*(y) -\dual{a^{k},y},
 \end{equation}
 with $$a^{k}:=Ax^{k}-AP^{-1}\nabla f_{\lambda/\tilde{\alpha}^{k}}(x^{k}).$$
 \par The main motivation behind the preconditioned proximal gradient method is to simplify the dual subproblem through a proper choice of the preconditioner $P$. In particular, if $P$ is chosen such that 
 \begin{equation}\label{inv}
 	AP^{-1}A^{\top}=\bm I_{p},
 \end{equation}
 then the dual problem \eqref{dual} reduces to
 \[
 \min_{y\in\mathbb{R}^{p}}
 \frac12\|y\|^2 + g_{\lambda/\tilde{\alpha}^{k}}^*(y) - a^{k\top}y,
 \]
 whose solution is simply
 $$y^{k}=\mathrm{prox}_{g_{\lambda/\tilde{\alpha}^{k}}^*}(a^{k}).$$
 Therefore, the key question becomes how to construct a suitable preconditioner $P$ that satisfies condition \eqref{inv}.
  For full row rank matrix $A$, let the SVD of $A$ be
$$A = U\Lambda V^{\top},$$
where $U\in\mathbb{O}^{p}$ and $V\in\mathbb{O}^{n}$. Then
$$A^{\top}A=V\Lambda^{\top}\Lambda V^{\top}.$$
Based on this decomposition, we select the preconditioner $P$ as
\begin{equation}\label{p}
	P=\left\{
	\begin{aligned}
		&A^{\top}A, & \mathcal{R}(A)&= n, \\
		&A^{\top}A+ V\begin{bmatrix}
			\bm0_{p\times p}   & &  \\
			
			&  &\tilde{P}
		\end{bmatrix}V^{\top}, &  \mathcal{R}(A)&=p<n,
	\end{aligned}
	\right.
\end{equation}
where $\tilde{P}\in\mathbb{S}^{n-p}_{++}$. 
%
\begin{remark}
Several closed form preconditioned proximal operators have been studied in \cite{CY2026}, including those for ellipsoidally constrained problems, structured $\ell_{1}$ regularization problems, and linearly constrained optimization problems. We refer the reader to \cite[Section~4]{CY2026} for further details.
\end{remark}
\section{Inexact subspace preconditioned Barzilai-Borwein proximal gradient method}\label{sec5}
The subproblems (\ref{D}) with $g=\tilde{g}$ and (\ref{DP}) with $P=P_{k}\succ0$ are both simplex constrained problems, which can be solved by the projected gradient method. However, finding the exact global minimizers of these subproblems at each iteration is computationally expensive and generally unnecessary for the overall convergence. Instead, we adopt an inexact strategy.
Denote
\begin{equation}
	\mathcal{D}_{PBB}^{k}(\lambda):=\max\limits_{i\in[m]}\left\{\frac{
		\left\langle\nabla f_{i}(x^{k}),x(\lambda)-x^{k}\right\rangle + g_{i}(Ax(\lambda))-g_{i}(Ax^{k})}{\tilde{\alpha}_{i}^{k}}\right\},
\end{equation}
where $x(\lambda)$ is defined as (\ref{xlam}) with $P=P_{k}$. Denote
\begin{equation}\label{theta}
	\theta(x^{k}):=\min\limits_{v\in\mathbb{R}^{n}}\max\limits_{\lambda\in\Delta_{m}}\dual{\nabla f_{\lambda/\tilde\alpha^{k}}(x^{k}),v}+g_{\lambda/\tilde\alpha^{k}}(Ax^{k}+Av)-g_{\lambda/\tilde\alpha^{k}}(Ax^{k})+\frac{1}{2}\| v\|_{P_{k}}^{2},
\end{equation}
the optimal value of (\ref{ppbb}) with $P=P_{k}$.
Denote $\lambda^{k}_{PBB}$ an optimal solution of (\ref{DP}). By Lemma \ref{ll2}, we have 
$$\mathcal{D}^{k}_{PBB}(\lambda_{PBB}^{k})\leq-\|x(\lambda^{k}_{PBB})-x^{k}\|_{P_{k}}^{2}.$$
\subsection{Approximate descent directions}
Now, we define the following $\epsilon$-approximate descent direction to construct subspace. 
\begin{definition}\label{defa1}
	Let $\epsilon\in[0,1)$. A vector $v_{k,\epsilon}\in\mathbb{R}^{n}$ is called $\epsilon$-approximate linear-operator-aware preconditioned proximal Barzilai-Borwein descent direction of (\ref{MCOPA}) at $x^{k}$, if there exists $\lambda_{PBB}^{k,\epsilon}\in\Delta_{m}$ such that
	\begin{equation}\label{app1}
		\begin{aligned}
			\mathcal{D}_{PBB}^{k}(\lambda_{PBB}^{k,\epsilon})\leq(1-\epsilon)\left(\dual{\nabla f_{\lambda_{PBB}^{k,\epsilon}/\tilde\alpha^{k}}(x^k),v_{k,\epsilon}}+g_{\lambda_{PBB}^{k,\epsilon}/\tilde\alpha^{k}}(Ax^{k}+Av_{k,\epsilon})-g_{\lambda_{PBB}^{k,\epsilon}/\tilde\alpha^{k}}(Ax^{k})\right),
		\end{aligned}
	\end{equation}
	where \begin{equation}\label{vke}
		v_{k,\epsilon}:=x(\lambda_{PBB}^{k,\epsilon})-x^{k}.
	\end{equation}
\end{definition}
\par Denote
\begin{equation}\label{thetae}
	\theta_{\epsilon}(x^{k}):=\dual{\nabla f_{\lambda_{PBB}^{k,\epsilon}/\tilde\alpha^{k}}(x^k),v_{k,\epsilon}}+g_{\lambda_{PBB}^{k,\epsilon}/\tilde\alpha^{k}}(Ax^{k}+Av_{k,\epsilon})-g_{\lambda_{PBB}^{k,\epsilon}/\tilde\alpha^{k}}(Ax^{k})+\frac{1}{2}\nm{v_{k,\epsilon}}_{P_{k}}^{2},
\end{equation}
the $\epsilon$-approximate optimal value of (\ref{ppbb}) with $P=P_{k}$.
\par After obtaining the $\epsilon$-approximate linear-operator-aware preconditioned proximal Barzilai-Borwein descent direction, we use it to construct the subspace $\mathtt{span}\{v_{k,\epsilon},\tilde{u}_{k,\epsilon}\}$, where 
$$\tilde{u}_{k,\epsilon} = u_{k} - \frac{\dual{u_{k},B_kv_{k,\epsilon}}}{\dual{v_{k,\epsilon},B_kv_{k,\epsilon}}}  v_{k,\epsilon}.$$
To get a closed form projection in constrained cases, the $u_{k}$ defined in (\ref{uk}) changes to
\begin{equation}\label{puk}
	u_{k}:=\mathrm{\Pi}^{P}_{\cap_{i\in[m]}\mathrm{dom}(\tilde{g}_{i})}(x^{k}+s_{k-1})-x^{k}.
\end{equation}
The corresponding subspace dual subproblem (\ref{D}) changes to
\begin{align*}\tag{D$_{\epsilon}$}\label{De}
	-&\min\limits_{\lambda}\omega_{S}^{k,\epsilon}(\lambda)\\
	&\mathrm{ s.t.} \ \lambda\in\Delta_{m},
\end{align*}
where the objective function $\omega_{S}^{k,\epsilon}(\lambda)$ is defined as
\begin{equation}\label{dualobj}
	\begin{aligned}
		-\min_{z\in\mathbb{R}^{2}}\dual{\frac{\lambda}{\alpha^{k}},Jf(x^{k})\tilde{G}_{k,\epsilon}z+g(Ax^{k}+z_{1}Av_{k,\epsilon})+g(Ax^{k}+z_{2}A\tilde{u}_{k,\epsilon})-2g(Ax^{k})}+\frac{1}{2}\|z\|^{2}_{\tilde{Q}_{k,\epsilon}},	
	\end{aligned}
\end{equation}
where $\tilde{G}_{k,\epsilon}=[v_{k,\epsilon},\tilde{u}_{k,\epsilon}]$ and $\tilde{Q}_{k}=\begin{bmatrix}
	q_{k}(v_{k,\epsilon})\nm{v_{k,\epsilon}}^{2}   & &~ & 0 \\
	
	0& &~  &q_{k}(\tilde{u}_{k,\epsilon})\nm{\tilde{u}_{k,\epsilon}}^{2}
\end{bmatrix}$.
By Danskin's theorem, the gradient of $\omega_{S}^{k,\epsilon}$ can be written as follows:
\begin{align*}
	\nabla\omega_{S}^{k,\epsilon}(\lambda)&=\frac{2g(Ax^{k})-Jf(x^{k})\tilde{G}_{k,\epsilon}z(\lambda)-g(Ax^{k}+z_{1}(\lambda)Av_{k,\epsilon})-g(Ax^{k}+z_{2}(\lambda)A\tilde{u}_{k,\epsilon})}{\alpha^k},
\end{align*}
where $z_{1}(\lambda)$ and $z_{2}(\lambda)$ are the optimal solutions to the following one-dimensional subproblems, respectively,
\begin{equation}\label{subssss1}
	\begin{aligned}
		\min_{z_{1}\in\mathbb{R}}\quad
		&\dual{\frac{\lambda}{\alpha^{k}},
			z_{1}Jf(x^{k})v_{k,\epsilon}
			+g(Ax^{k}+z_{1}Av_{k,\epsilon})}
		+\frac{q_{k}(v_{k,\epsilon})}{2}\nm{v_{k,\epsilon}}^{2}z_{1}^{2},
	\end{aligned}
\end{equation}
and
\begin{equation}\label{subssss2}
	\begin{aligned}
		\min_{z_{2}\in\mathbb{R}}\quad
		&\dual{\frac{\lambda}{\alpha^{k}},
			z_{2}Jf(x^{k})\tilde{u}_{k,\epsilon}
			+g(Ax^{k}+z_{2}A\tilde{u}_{k,\epsilon})}
		+\frac{q_{k}(\tilde{u}_{k,\epsilon})}{2}\nm{\tilde{u}_{k,\epsilon}}^{2}z_{2}^{2}.
	\end{aligned}
\end{equation}
Denote
\begin{equation}
	\mathcal{D}^{k}(\lambda):=\max_{i\in[m]}\frac{\dual{\nabla f_{i}(x^{k}),\tilde{G}_{k,\epsilon}z(\lambda)}+g_{i}(Ax^{k}+z_{1}(\lambda)Av_{k,\epsilon})+g_{i}(Ax^{k}+z_{2}(\lambda)A\tilde{u}_{k,\epsilon})-2g_{i}(Ax^{k})}{\alpha^{k}_{i}},
\end{equation}
where $z(\lambda)=(z_{1}(\lambda),z_{2}(\lambda))$, $z_{1}(\lambda)$ and $z_{2}(\lambda)$ are the minimizers of (\ref{subssss1}) and (\ref{subssss2}), respectively. 
Denote $\lambda^{k}$ an optimal solution of (\ref{De}). By Lemma \ref{ll2}, we have 
$$\mathcal{D}^{k}(\lambda^{k})\leq-\|z(\lambda^{k})\|^{2}_{\tilde{Q}_{k,\epsilon}}.$$
Next, we define the following $(\epsilon,\delta)$-approximate subspace descent direction. 
\begin{definition}\label{defa2}
	Let $\epsilon,\delta\in[0,1)$. A vector $d_{S}^{k,\epsilon,\delta}\in\mathbb{R}^{n}$ is called $(\epsilon,\delta)$-approximate subspace descent direction of (\ref{MCOPA}) at $x^{k}$, if there exists $\lambda^{k,\epsilon,\delta}\in\Delta_{m}$ such that
	\begin{equation}\label{app2}
		\begin{aligned}
			\mathcal{D}^{k}(\lambda^{k,\epsilon,\delta})\leq(1-\delta)\big(&\dual{\nabla f_{\lambda^{k,\epsilon,\delta}/\alpha^{k}}(x^{k}),d_{S}^{k,\epsilon,\delta}}+ g_{\lambda^{k,\epsilon,\delta}/\alpha^{k}}\left(Ax^{k}+z_{1}(\lambda^{k,\epsilon,\delta})Av_{k,\epsilon}\right)\\
			&+g_{\lambda^{k,\epsilon,\delta}/\alpha^{k}}\left(Ax^{k}+z_{2}(\lambda^{k,\epsilon,\delta})A\tilde{u}_{k,\epsilon}\right)-2g_{\lambda^{k,\epsilon,\delta}/\alpha^{k}}(Ax^k)\big),
		\end{aligned}
	\end{equation}
	where
	\begin{equation}\label{ds}
		d^{k,\epsilon,\delta}_{S}:=z_{1}(\lambda^{k,\epsilon,\delta})v_{k,\epsilon}+z_{2}(\lambda^{k,\epsilon,\delta})\tilde{u}_{k,\epsilon}.
	\end{equation}
\end{definition}

%
Some properties of the approximate descent directions are given as follows.

\begin{proposition}\label{p4.1}
	Let $\epsilon,\delta\in[0,1)$ and $P=P_{k}\succeq c_{3}\bm I$, the following statements hold.
	\begin{itemize}
		\item[(i)] $v_{k,\epsilon}$ satisfies
		\begin{equation}\label{e57}
			\dual{\nabla f_{\lambda_{PBB}^{k,\epsilon}/\tilde\alpha^{k}},v_{k,\epsilon}}+g_{\lambda_{PBB}^{k,\epsilon}/\tilde\alpha^{k}}(Ax^{k}+Av_{k,\epsilon})-g_{\lambda_{PBB}^{k,\epsilon}/\tilde\alpha^{k}}(Ax^{k})\leq - \nm{v_{k,\epsilon}}_{P_{k}}^{2};
		\end{equation}
	\item[(ii)] If $\epsilon=0$, then $\lambda^{k,\epsilon}_{PBB}$ is a solution of (\ref{DP}), and hence approximate descent direction defined in Definitions~\ref{defa1} coincides with the corresponding exact descent direction;
	\item[(iii)] $\theta_{\epsilon}(x^{k})\leq\theta(x^{k})\leq(1-\epsilon)^{2}\theta_{\epsilon}(x^{k})$;
	\item[(iv)] $v_{k,\epsilon}\rightarrow0$ if and only if $v^{P}_{k}\rightarrow 0$, where $v^{P}_{k}$ is the optimal solution of the minimization problem in (\ref{theta});
		\item[(v)] $z_{1}(\lambda^{k,\epsilon,\delta})$ satisfies
		\begin{equation}\label{e58}
			\begin{aligned}
				\dual{\nabla f_{\lambda^{k,\epsilon,\delta}/\alpha^{k}}(x^{k}),z_{1}(\lambda^{k,\epsilon,\delta})v_{k,\epsilon}}&+ g_{\lambda^{k,\epsilon,\delta}/\alpha^{k}}\left(Ax^{k}+z_{1}(\lambda^{k,\epsilon,\delta})Av_{k,\epsilon}\right)-g_{\lambda^{k,\epsilon,\delta}/\alpha^{k}}(Ax^k)\\
				&\leq - {q_{k}(v_{k,\epsilon})}\nm{v_{k,\epsilon}}^{2}(z_{1}(\lambda^{k,\epsilon,\delta}))^{2};
			\end{aligned}
		\end{equation}
	\item[(vi)] $z_{2}(\lambda^{k,\epsilon,\delta})$ satisfiies
	\begin{equation}\label{e55}
		\begin{aligned}
			\dual{\nabla f_{\lambda^{k,\epsilon,\delta}/\alpha^{k}}(x^{k}),z_{2}(\lambda^{k,\epsilon,\delta})\tilde{u}_{k,\epsilon}}&+ g_{\lambda^{k,\epsilon,\delta}/\alpha^{k}}\left(Ax^{k}+z_{2}(\lambda^{k,\epsilon,\delta})A\tilde{u}_{k,\epsilon}\right)-g_{\lambda^{k,\epsilon,\delta}/\alpha^{k}}(Ax^k)\\
			&\leq - {q_{k}(\tilde{u}_{k,\epsilon})}\nm{\tilde{u}_{k,\epsilon}}^{2}(z_{2}(\lambda^{k,\epsilon,\delta}))^{2};
		\end{aligned}
	\end{equation}
\item[(vii)] If $\delta=0$, then $\lambda^{k,\epsilon,\delta}$ is a solution of (\ref{De}), and hence the approximate descent direction defined in Definitions~\ref{defa2} coincides with the corresponding exact descent direction.
	\end{itemize}
\end{proposition}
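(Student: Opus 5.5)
The plan is to derive all seven items from two ingredients: Lemma~\ref{ll2}, and the identity between a pointwise maximum over $i$ and the maximum of $\lambda$-weighted sums. Throughout, fix $x^k$ and write
\[
h_\lambda(v):=\dual{\nabla f_{\lambda/\tilde\alpha^k}(x^k),v}+g_{\lambda/\tilde\alpha^k}(Ax^k+Av)-g_{\lambda/\tilde\alpha^k}(Ax^k).
\]
This function is proper, convex and lower semicontinuous, and $h_\lambda(0)=0$.

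\textbf{Items (i), (v), (vi).} By (\ref{x})/(\ref{xlam}), $v_{k,\epsilon}=x(\lambda^{k,\epsilon}_{PBB})-x^k$ is the minimizer of $h_{\lambda^{k,\epsilon}_{PBB}}(v)+\frac12\|v\|_{P_k}^2$. Lemma~\ref{ll2} with $P=P_k$ then gives $h_{\lambda}(v_{k,\epsilon})\le -\|v_{k,\epsilon}\|_{P_k}^2$, which is (\ref{e57}). Items (v) and (vi) are the same argument in one dimension. $z_1(\lambda^{k,\epsilon,\delta})$ minimizes (\ref{subssss1}), whose nonsmooth part $z_1\mapsto g_{\lambda/\alpha^k}(Ax^k+z_1Av_{k,\epsilon})$ is convex. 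Its quadratic weight is $q_k(v_{k,\epsilon})\|v_{k,\epsilon}\|^2>0$, which I will assume, since $q_k$ approximates a Rayleigh quotient of $B_k\succ0$. Applying Lemma~\ref{ll2} on $\mathbb{R}$ gives (\ref{e58}); (\ref{e55}) follows in the same way.

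\textbf{Items (ii), (vii).} Take $\epsilon=0$ and $\lambda=\lambda^{k,0}_{PBB}$. Since a maximum dominates any convex combination, $\mathcal{D}^k_{PBB}(\lambda)\ge h_\lambda(v_{k,0})$, and (\ref{app1}) gives the reverse inequality, so the two are equal. Consequently the primal objective of (\ref{ppbb}) at $v_{k,0}$ equals $h_\lambda(v_{k,0})+\frac12\|v_{k,0}\|_{P_k}^2=-\omega^k_P(\lambda)$. Weak duality (primal value $\ge -\omega_P^k(\lambda')$ for every $\lambda'$) then forces both $v_{k,0}$ and $\lambda$ to be optimal, hence $\lambda$ solves (\ref{DP}). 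Item (vii) is identical, using (\ref{app2}), (\ref{dualobj}) and the separable model (\ref{subsss}).

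\textbf{Item (iii).} The lower bound is immediate: $\theta(x^k)=-\min_\lambda\omega^k_P(\lambda)\ge-\omega^k_P(\lambda^{k,\epsilon}_{PBB})=\theta_\epsilon(x^k)$. For the upper bound, note that each component $\psi_i(v)=[\dual{\nabla f_i(x^k),v}+g_i(Ax^k+Av)-g_i(Ax^k)]/\tilde\alpha_i^k$ is convex with $\psi_i(0)=0$. Hence $\psi_i(tv)\le t\psi_i(v)$ for $t\in[0,1]$, and so $\max_i\psi_i(tv_{k,\epsilon})\le t\,\mathcal{D}^k_{PBB}(\lambda^{k,\epsilon}_{PBB})$.

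Set $a:=-h_{\lambda^{k,\epsilon}_{PBB}}(v_{k,\epsilon})$ and $b:=\|v_{k,\epsilon}\|^2_{P_k}$; by (i), $a\ge b$. Evaluating the primal objective at $tv_{k,\epsilon}$ and using (\ref{app1}) gives
\[
\theta(x^k)\le -t(1-\epsilon)a+\tfrac{t^2}{2}b .
\]
Choosing $t=1-\epsilon$ yields $\theta(x^k)\le(1-\epsilon)^2(-a+b/2)=(1-\epsilon)^2\theta_\epsilon(x^k)$.

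\textbf{Item (iv).} Item (iv) is where I expect the real work. From (i), $\theta_\epsilon\le-\frac12\|v_{k,\epsilon}\|_{P_k}^2\le-\frac{c_3}{2}\|v_{k,\epsilon}\|^2$, and the analogous bound holds for $v^P_k$ and $\theta$. Combined with (iii), this gives
\[
\tfrac{c_3}{2}\|v^P_k\|^2\le-\theta\le-(1-\epsilon)^2\theta_\epsilon,
\qquad
\tfrac{c_3}{2}(1-\epsilon)^2\|v_{k,\epsilon}\|^2\le -(1-\epsilon)^2\theta_\epsilon\le -\theta .
\]
It therefore suffices to show that $\theta\to0$ when $v^P_k\to0$, and that $\theta_\epsilon\to0$ when $v_{k,\epsilon}\to0$.

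Both reduce to showing that the linear-plus-$g$ terms vanish as the direction vanishes. For $\theta_\epsilon$ this is the quantity $h_\lambda(v_{k,\epsilon})$; for $\theta$ it is $\max_i\psi_i(v^P_k)$. I would bound $|h_\lambda(v)|$ and $|\psi_i(v)|$ using boundedness of $\{\nabla f_i(x^k)\}$, the bounds $\alpha_{\min}\le\tilde\alpha^k_i$, and local Lipschitz continuity (or continuity relative to the domain) of $g_i$ along bounded iterates. This continuity requirement is the main obstacle, and I expect to need a standing boundedness/Lipschitz assumption here, presumably the one introduced for the convergence analysis. The chain of inequalities above then gives both directions of the equivalence.
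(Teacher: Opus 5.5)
Your proposal follows the paper's proof essentially item by item. Lemma~\ref{ll2} gives (i), (v) and (vi). For (ii) and (vii) you use the observation that $\max_i$ equals the $\lambda$-weighted sum at the inexact multiplier, followed by a duality sandwich; weak duality suffices, whereas the paper invokes Sion's theorem, but the argument is the same. For (iii) you rescale by $1-\epsilon$ and combine convexity with (\ref{app1}). For (iv) you use (iii) plus continuity of $g$, and your remark that some continuity or local Lipschitz property of $g_i$ along the iterates is required is exactly what the paper uses tacitly, since it is not among the stated hypotheses.

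One slip in (iv) needs fixing. Item (iii) gives $\theta\le(1-\epsilon)^2\theta_\epsilon$, i.e.\ $-\theta\ge-(1-\epsilon)^2\theta_\epsilon$. So your first chain $\frac{c_3}{2}\|v^P_k\|^2\le-\theta\le-(1-\epsilon)^2\theta_\epsilon$ is reversed, and combined with your second chain it would force $\theta=(1-\epsilon)^2\theta_\epsilon$. Replace it by $\frac{c_3}{2}\|v^P_k\|^2\le-\theta\le-\theta_\epsilon$, which comes from the left inequality $\theta_\epsilon\le\theta$ of (iii). The bound $\theta\le-\frac12\|v^P_k\|_{P_k}^2$ follows from Lemma~\ref{ll2} applied directly to $\max_i\psi_i$. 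This corrected chain gives the same reduction: it suffices to show $\theta_\epsilon\to0$ when $v_{k,\epsilon}\to0$.
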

\begin{proof}
	(i) The assertion is a consequence of Lemma \ref{ll2}.
	\par (ii) For $\epsilon = 0$, Definition \ref{defa1} gives
	\begin{align*}
		&\max\limits_{i\in[m]}\left\{\frac{
			\left\langle\nabla f_{i}(x^{k}),x(\lambda_{PBB}^{k,\epsilon})-x^{k}\right\rangle + g_{i}(Ax(\lambda_{PBB}^{k,\epsilon}))-g_{i}(Ax^{k})}{\tilde{\alpha}_{i}^{k}}\right\}\\
		&\leq \dual{\nabla f_{\lambda_{PBB}^{k,\epsilon}/\tilde\alpha^{k}}(x^k),x(\lambda_{PBB}^{k,\epsilon})-x^{k}}+g_{\lambda_{PBB}^{k,\epsilon}/\tilde\alpha^{k}}(Ax(\lambda_{PBB}^{k,\epsilon}))-g_{\lambda_{PBB}^{k,\epsilon}/\tilde\alpha^{k}}(Ax^{k}),
	\end{align*}
and hence the equality holds. Adding $\frac{1}{2}\nm{x(\lambda_{PBB}^{k,\epsilon})-x^{k}}_{P_{k}}^{2}$ on both sides, we have 
 \begin{align*}
 	&\min\limits_{x\in\mathbb{R}^{n}}\max_{\lambda\in\Delta_{m}}\dual{\nabla f_{\lambda/\tilde\alpha^{k}}(x^k),x-x^{k}}+g_{\lambda/\tilde\alpha^{k}}(Ax)-g_{\lambda/\tilde\alpha^{k}}(Ax^{k})+\frac{1}{2}\nm{x-x^{k}}_{P_{k}}^{2}\\
 	&\leq\max\limits_{i\in[m]}\left\{\frac{
 		\left\langle\nabla f_{i}(x^{k}),x(\lambda_{PBB}^{k,\epsilon})-x^{k}\right\rangle + g_{i}(Ax(\lambda_{PBB}^{k,\epsilon}))-g_{i}(Ax^{k})}{\tilde{\alpha}_{i}^{k}}\right\}+\frac{1}{2}\nm{x(\lambda_{PBB}^{k,\epsilon})-x^{k}}_{P_{k}}^{2}\\
 	&= \dual{\nabla f_{\lambda_{PBB}^{k,\epsilon}/\tilde\alpha^{k}}(x^k),x(\lambda_{PBB}^{k,\epsilon})-x^{k}}+g_{\lambda_{PBB}^{k,\epsilon}/\tilde\alpha^{k}}(Ax(\lambda_{PBB}^{k,\epsilon}))-g_{\lambda_{PBB}^{k,\epsilon}/\tilde\alpha^{k}}(Ax^{k})+\frac{1}{2}\nm{x(\lambda_{PBB}^{k,\epsilon})-x^{k}}_{P_{k}}^{2}\\
 	&=\min\limits_{x\in\mathbb{R}^{n}}\dual{\nabla f_{\lambda_{PBB}^{k,\epsilon}/\tilde\alpha^{k}}(x^k),x-x^{k}}+g_{\lambda_{PBB}^{k,\epsilon}/\tilde\alpha^{k}}(Ax)-g_{\lambda_{PBB}^{k,\epsilon}/\tilde\alpha^{k}}(Ax^{k})+\frac{1}{2}\nm{x-x^{k}}_{P_{k}}^{2}\\
 	&\leq\max_{\lambda\in\Delta_{m}}\min\limits_{x\in\mathbb{R}^{n}}\dual{\nabla f_{\lambda/\tilde\alpha^{k}}(x^k),x-x^{k}}+g_{\lambda/\tilde\alpha^{k}}(Ax)-g_{\lambda/\tilde\alpha^{k}}(Ax^{k})+\frac{1}{2}\nm{x-x^{k}}_{P_{k}}^{2},
 \end{align*}
where the second equality follows by the definition of $x(\lambda_{PBB}^{k,\epsilon})$. By Sion's minimax theorem, the above equality holds. Therefore, assertion (ii) holds.
\par (iii) By the definitions of $\theta(x^{k})$ and $\theta_{\epsilon}(x^{k})$, we have
\begin{align*}
	\theta_{\epsilon}(x^{k})&=\dual{\nabla f_{\lambda_{PBB}^{k,\epsilon}/\tilde\alpha^{k}}(x^k),v_{k,\epsilon}}+g_{\lambda_{PBB}^{k,\epsilon}/\tilde\alpha^{k}}(Ax^{k}+Av_{k,\epsilon})-g_{\lambda_{PBB}^{k,\epsilon}/\tilde\alpha^{k}}(Ax^{k})+\frac{1}{2}\nm{v_{k,\epsilon}}_{P_{k}}^{2}\\
	&=\min\limits_{v\in\mathbb{R}^{n}}\dual{\nabla f_{\lambda_{PBB}^{k,\epsilon}/\tilde\alpha^{k}}(x^k),v}+g_{\lambda_{PBB}^{k,\epsilon}/\tilde\alpha^{k}}(Ax^{k}+Av)-g_{\lambda_{PBB}^{k,\epsilon}/\tilde\alpha^{k}}(Ax^{k})+\frac{1}{2}\nm{v}_{P_{k}}^{2}\\
	&\leq\min\limits_{v\in\mathbb{R}^{n}}\max\limits_{\lambda\in\Delta_{m}}\dual{\nabla f_{\lambda/\tilde\alpha^{k}}(x^k),v}+g_{\lambda/\tilde\alpha^{k}}(Ax^{k}+Av)-g_{\lambda/\tilde\alpha^{k}}(Ax^{k})+\frac{1}{2}\|v\|_{P_{k}}^{2}\\
	&=\theta(x^{k}),
\end{align*}
where the second equality follows by the definition of $v_{k,\epsilon}$. Next, we prove the right-hand side of the inequality. By the definitions of $\theta(x^{k})$ and $\theta_{\epsilon}(x^{k})$, we have
\begin{align*}
	&\theta(x^{k})=\min\limits_{v\in\mathbb{R}^{n}}\max\limits_{\lambda\in\Delta_{m}}\dual{\nabla f_{\lambda/\tilde\alpha^{k}}(x^k),v}+g_{\lambda/\tilde\alpha^{k}}(Ax^{k}+Av)-g_{\lambda/\tilde\alpha^{k}}(Ax^{k})+\frac{1}{2}\| v\|_{P_{k}}^{2}\\
	&=\min\limits_{v\in\mathbb{R}^{n}}\max\limits_{\lambda\in\Delta_{m}}\dual{\nabla f_{\lambda/\tilde\alpha^{k}}(x^k),(1-\epsilon)v}+g_{\lambda/\tilde\alpha^{k}}(Ax^{k}+(1-\epsilon)Av)-g_{\lambda/\tilde\alpha^{k}}(Ax^{k})+\frac{1}{2}\| (1-\epsilon)v\|_{P_{k}}^{2}\\
	&\leq\min\limits_{v\in\mathbb{R}^{n}}\max\limits_{\lambda\in\Delta_{m}}(1-\epsilon)\left(\dual{\nabla f_{\lambda/\tilde\alpha^{k}}(x^k),v}+g_{\lambda/\tilde\alpha^{k}}(Ax^{k}+Av)-g_{\lambda/\tilde\alpha^{k}}(Ax^{k})\right)+\frac{(1-\epsilon)^{2}}{2}\| v\|_{P_{k}}^{2}\\
	&\leq\max\limits_{\lambda\in\Delta_{m}}(1-\epsilon)\left(\dual{\nabla f_{\lambda/\tilde\alpha^{k}}(x^k),v_{k,\epsilon}}+g_{\lambda/\tilde\alpha^{k}}(Ax^{k}+Av_{k,\epsilon})-g_{\lambda/\tilde\alpha^{k}}(Ax^{k})\right)+\frac{(1-\epsilon)^{2}}{2}\| v_{k,\epsilon}\|_{P_{k}}^{2}\\
	&\leq(1-\epsilon)^{2}\left(\dual{\nabla f_{\lambda_{PBB}^{k,\epsilon}/\tilde\alpha^{k}}(x^k),v_{k,\epsilon}}+g_{\lambda_{PBB}^{k,\epsilon}/\tilde\alpha^{k}}(Ax^{k}+Av_{k,\epsilon})-g_{\lambda_{PBB}^{k,\epsilon}/\tilde\alpha^{k}}(Ax^{k})+\frac{1}{2}\| v_{k,\epsilon}\|_{P_{k}}^{2}\right)\\
	&=(1-\epsilon)^{2}\theta_{\epsilon}(x^{k}),
\end{align*}
where the first inequality follows by $1-\epsilon\in(0,1]$ and the convexity of $g_{i}$, and the third inequality is due to the relation (\ref{app1}).  
\par (iv) Notice that 
$$\theta(x^{k})=\dual{\nabla f_{\lambda_{PBB}^{k}/\tilde\alpha^{k}}(x^k),v_{k}^{P}}+g_{\lambda_{PBB}^{k}/\tilde\alpha^{k}}(Ax^{k}+Av_{k}^{P})-g_{\lambda_{PBB}^{k}/\tilde\alpha^{k}}(Ax^{k})+\frac{1}{2}\| v_{k}^{P}\|_{P_{k}}^{2},$$
and
$$\theta_{\epsilon}(x^{k})=\dual{\nabla f_{\lambda_{PBB}^{k,\epsilon}/\tilde\alpha^{k}}(x^k),v_{k,\epsilon}}+g_{\lambda_{PBB}^{k,\epsilon}/\tilde\alpha^{k}}(Ax^{k}+Av_{k,\epsilon})-g_{\lambda_{PBB}^{k,\epsilon}/\tilde\alpha^{k}}(Ax^{k})+\frac{1}{2}\nm{v_{k,\epsilon}}_{P_{k}}^{2}.$$
Therefore, the assertion is a consequence of assertion (iii) and the continuity of $g$.
 \par(v) and (vi) The assertions can be obtained by Lemma \ref{ll2}.
\par(vii)  The assertions can be obtained by using the similar arguments as in the proof of assertion (ii).                                                                                                                                                                                                                                                                                                                                                                                                                                                                                                                                                                                                                                                                                                                                                 
\end{proof}
\par The remaining question is whether the search direction $d_{S}^{k,\epsilon,\delta}$ satisfies a sufficient descent condition required for the global convergence analysis. We provide such a sufficient condition in the following proposition.
\begin{proposition}[Sufficient descent conditions]\label{suco}
	Let $\epsilon,\delta\in[0,1)$ and assume that there exist constants $0<c_{1}\leq c_{2}$ and $c_{3}>0$ such that
	$c_{1}\leq q_{k}(v_{k,\epsilon})\leq c_{2}$, $q_{k}(\tilde{u}_{k,\epsilon})\geq c_{1}$ and $P=P_{k}\succeq c_{3}\bm{I}_{n}$ in (\ref{subssss1}), (\ref{subssss2}) and (\ref{e37}) for all $k$. Then, the search direction $d_{S}^{k,\epsilon,\delta}$ defined in (\ref{ds}) satisfies the following conditions:  
	\begin{equation}\label{con1}
		\dual{\nabla f_{i}(x^{k}), \frac{1}{2}d_{S}^{k,\epsilon,\delta}}+g_{i}\left(Ax^{k}+\frac{1}{2} Ad_{S}^{k,\epsilon,\delta}\right)-g_{i}(Ax^k)\leq-\frac{c_{1}(1-\delta)\alpha^{k}_{i}}{4}\nm{d_{S}^{k,\epsilon,\delta}}^{2},
	\end{equation}
and
	\begin{equation}\label{con2}
		\begin{aligned}
			&\dual{\nabla f_{i}(x^{k}), \frac{1}{2}d_{S}^{k,\epsilon,\delta}}+g_{i}\left(Ax^{k}+\frac{1}{2} Ad_{S}^{k,\epsilon,\delta}\right)-g_{i}(Ax^k)\\
			&\leq c\alpha^{k}_{i}\left(\dual{\nabla f_{\lambda_{PBB}^{k,\epsilon}/\tilde{\alpha}^{k}}(x^{k}),v_{k,\epsilon}}+ g_{\lambda_{PBB}^{k,\epsilon}/\tilde{\alpha}^{k}}\left(Ax^{k}+Av_{k,\epsilon}\right)-g_{\lambda_{PBB}^{k,\epsilon}/\tilde{\alpha}^{k}}(Ax^k)\right).
		\end{aligned}
	\end{equation}
where
\begin{equation}\label{c}
	c:=\frac{(1-\delta)(1-\epsilon){\alpha_{\min}}}{4{\alpha_{\max}}}\min\left\{\frac{(1-\epsilon)c_{3}\alpha_{\min}}{c_{2}\alpha_{\max}},1\right\}.
\end{equation} 
\end{proposition}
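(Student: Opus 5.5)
The proof rests on three ingredients: convexity of $g_i$ applied to the midpoint $\frac12 z_1 v_{k,\epsilon}+\frac12 z_2\tilde u_{k,\epsilon}$, the inexactness condition (\ref{app2}), and the one-dimensional descent bounds of Proposition \ref{p4.1}(v)--(vi). Throughout, write $\lambda=\lambda^{k,\epsilon,\delta}$ and $z_j=z_j(\lambda)$.

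\textbf{Step 1: split $d_S^{k,\epsilon,\delta}$ into its two pieces.} Since $\frac12 d_S^{k,\epsilon,\delta}=\frac12(z_1v_{k,\epsilon})+\frac12(z_2\tilde u_{k,\epsilon})$, convexity of $g_i$ gives, for each $i$,
\[
g_i\bigl(Ax^k+\tfrac12Ad_S^{k,\epsilon,\delta}\bigr)-g_i(Ax^k)\le \tfrac12\bigl[g_i(Ax^k+z_1Av_{k,\epsilon})+g_i(Ax^k+z_2A\tilde u_{k,\epsilon})-2g_i(Ax^k)\bigr].
\]
Hence the left-hand side of (\ref{con1}) is at most $\frac12\alpha_i^k\mathcal{D}^k(\lambda)$. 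By (\ref{app2}) this is at most $\frac{(1-\delta)\alpha_i^k}{2}$ times the $\lambda/\alpha^k$-weighted sum $S_1+S_2$. Here $S_1$ and $S_2$ are exactly the left-hand sides of (\ref{e58}) and (\ref{e55}).

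\textbf{Step 2: obtain (\ref{con1}).} Proposition \ref{p4.1}(v)--(vi) gives $S_1+S_2\le -q_k(v_{k,\epsilon})\|z_1v_{k,\epsilon}\|^2-q_k(\tilde u_{k,\epsilon})\|z_2\tilde u_{k,\epsilon}\|^2$. Using $q_k\ge c_1$, this is at most $-c_1(\|z_1v\|^2+\|z_2\tilde u\|^2)$. Then $\|a\|^2+\|b\|^2\ge\frac12\|a+b\|^2$ yields $S_1+S_2\le -\frac{c_1}{2}\|d_S^{k,\epsilon,\delta}\|^2$, which combined with Step 1 gives (\ref{con1}).

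\textbf{Step 3: obtain (\ref{con2}).} From Step 1 and (v)--(vi), using $S_2\le 0$, the left-hand side of (\ref{con2}) is at most $\frac{(1-\delta)\alpha_i^k}{2}S_1$. Also $S_1\le\phi(z_1)$, where
\[
\phi(t):=t\langle\nabla f_{\lambda/\alpha^k},v_{k,\epsilon}\rangle+g_{\lambda/\alpha^k}(Ax^k+tAv_{k,\epsilon})-g_{\lambda/\alpha^k}(Ax^k)+\tfrac{q_k(v_{k,\epsilon})}{2}\|v_{k,\epsilon}\|^2t^2 ,
\]
and $z_1$ minimizes $\phi$, so $S_1\le\phi(t)$ for every $t$. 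It therefore suffices to exhibit a good trial $t\in(0,1]$.

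By convexity, $\phi(t)\le t\Psi_\lambda+\frac{c_2}{2}\|v_{k,\epsilon}\|^2t^2$, where $\Psi_\mu$ denotes $\langle\nabla f_{\mu/\alpha^k},v_{k,\epsilon}\rangle+g_{\mu/\alpha^k}(Ax^k+Av_{k,\epsilon})-g_{\mu/\alpha^k}(Ax^k)$.

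\textbf{Step 4 (main obstacle): compare weights.} We must relate $\Psi_\lambda$, which carries weights $\lambda/\alpha^k$, to the target $\Psi^{PBB}:=$ the right-hand bracket of (\ref{con2}), which carries weights $\lambda^{k,\epsilon}_{PBB}/\tilde\alpha^k$. The bridge is (\ref{app1}). Each individual term $\frac{\langle\nabla f_j,v\rangle+g_j(\cdot)-g_j(Ax^k)}{\tilde\alpha^k_j}$ is at most $\mathcal{D}_{PBB}^k\le(1-\epsilon)\Psi^{PBB}<0$. Multiplying by $\tilde\alpha_j^k/\alpha_j^k\ge\alpha_{\min}/\alpha_{\max}$ and summing with weights $\lambda_j$ gives
\[
\Psi_\lambda\le\frac{(1-\epsilon)\alpha_{\min}}{\alpha_{\max}}\Psi^{PBB}.
\]
By Proposition \ref{p4.1}(i) and $P_k\succeq c_3 I$, we also have $\|v_{k,\epsilon}\|^2\le -\Psi^{PBB}/c_3$. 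Substituting both bounds,
\[
\phi(t)\le -|\Psi^{PBB}|\Bigl(t\tfrac{(1-\epsilon)\alpha_{\min}}{\alpha_{\max}}-\tfrac{c_2t^2}{2c_3}\Bigr).
\]
Choose $t=\min\{1,(1-\epsilon)c_3\alpha_{\min}/(c_2\alpha_{\max})\}$. This makes the bracket at least $\frac{t(1-\epsilon)\alpha_{\min}}{2\alpha_{\max}}$. Multiplying by $\frac{(1-\delta)\alpha_i^k}{2}$ recovers a constant of the form (\ref{c}) (with factor $1/4$), possibly up to a harmless constant that I would tune.

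Step 4 is the delicate part: the sign bookkeeping must be tracked carefully when passing between the two weightings. In particular, both bracketed quantities are negative, so the ratio bound is applied in the right direction.
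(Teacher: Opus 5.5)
Your proposal is correct and follows essentially the same route as the paper. Midpoint convexity, (\ref{app2}) and Proposition~\ref{p4.1}(v)--(vi) give (\ref{con1}). For (\ref{con2}) you drop the nonpositive $z_2$-term, test the $z_1$-subproblem at a trial $t\in(0,1]$, pass from the weights $\lambda/\alpha^k$ to $\lambda_{PBB}^{k,\epsilon}/\tilde\alpha^k$ via (\ref{app1}) and $\alpha_{\min}\le\alpha^k_j,\tilde\alpha^k_j\le\alpha_{\max}$, and bound $\|v_{k,\epsilon}\|^2$ via (\ref{e57}) with $P_k\succeq c_3\bm{I}_n$. Your choice $t=\min\{1,(1-\epsilon)c_3\alpha_{\min}/(c_2\alpha_{\max})\}$ gives exactly the constant $c$ in (\ref{c}), so no further tuning is needed.
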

\begin{proof}
	By the definition of $d_{S}^{k,\delta}$, we obtain 
	\begin{equation}\label{econ1}\small
		\begin{alignedat}{2}
			&\quad &&
			\max_{\lambda\in\Delta_{m}}\bigg\{
			\dual{\nabla f_{\lambda/\alpha^{k}}(x^{k}), d_{S}^{k,\epsilon,\delta}}
			+2g_{\lambda/\alpha^{k}}\left(Ax^{k}+\frac{1}{2}Ad_{S}^{k,\epsilon,\delta}\right)
			-2g_{\lambda/\alpha^{k}}(Ax^k)
			\bigg\} \\
			&~~~~~~~= {} &&
			\max_{\lambda\in\Delta_{m}}\bigg\{
			\dual{\nabla f_{\lambda/\alpha^{k}}(x^{k}), d_{S}^{k,\epsilon,\delta}}
			+2g_{\lambda/\alpha^{k}}\left(
			Ax^{k}
			+\frac{1}{2}z_{1}(\lambda^{k,\epsilon,\delta})Av_{k,\epsilon}
			+\frac{1}{2}z_{2}(\lambda^{k,\epsilon,\delta})A\tilde{u}_{k,\epsilon}
			\right) \\
			& &&\qquad
			-2g_{\lambda/\alpha^{k}}(Ax^k)
			\bigg\} \\
			&\overset{\mathrm{convexity~of}~g}{\leq} {} &&
			\max_{\lambda\in\Delta_{m}}\bigg\{
			\dual{\nabla f_{\lambda/\alpha^{k}}(x^{k}),d_{S}^{k,\epsilon,\delta}}
			+g_{\lambda/\alpha^{k}}
			\left(Ax^{k}+z_{1}(\lambda^{k,\epsilon,\delta})Av_{k,\epsilon}\right) \\
			& &&\qquad
			+g_{\lambda/\alpha^{k}}
			\left(Ax^{k}+z_{2}(\lambda^{k,\epsilon,\delta})A\tilde{u}_{k,\epsilon}\right)
			-2g_{\lambda/\alpha^{k}}(Ax^k)
			\bigg\} \\
			&~~~~~~\overset{(\ref{app2})}{\leq} {} &&
			(1-\delta)\bigg(
			\dual{\nabla f_{\lambda^{k,\epsilon,\delta}/\alpha^{k}}(x^{k}),d_{S}^{k,\epsilon,\delta}}
			+g_{\lambda^{k,\epsilon,\delta}/\alpha^{k}}
			\left(Ax^{k}+z_{1}(\lambda^{k,\epsilon,\delta})Av_{k,\epsilon}\right) \\
			& &&\qquad
			+g_{\lambda^{k,\epsilon,\delta}/\alpha^{k}}
			\left(Ax^{k}+z_{2}(\lambda^{k,\epsilon,\delta})A\tilde{u}_{k,\epsilon}\right)
			-2g_{\lambda^{k,\epsilon,\delta}/\alpha^{k}}(Ax^k)
			\bigg) \\
			&~\overset{(\ref{e58})\,\mathrm{and}\,(\ref{e55})}{\leq} {} &&
			-(1-\delta)
			\left(
			q_{k}(v_{k,\epsilon})
			\nm{z_{1}(\lambda^{k,\epsilon,\delta})v_{k,\epsilon}}^{2}
			+q_{k}(\tilde{u}_{k,\epsilon})
			\nm{z_{2}(\lambda^{k,\epsilon,\delta})\tilde{u}_{k,\epsilon}}^{2}
			\right) \\
			&~~~~~~~\leq {} &&
			-\frac{c_{1}(1-\delta)}{2}
			\left(
			\nm{
				z_{1}(\lambda^{k,\epsilon,\delta})v_{k,\epsilon}
				+z_{2}(\lambda^{k,\epsilon,\delta})\tilde{u}_{k,\epsilon}
			}^{2}
			\right) \\
			&~~~~~~~= {} &&
			-\frac{c_{1}(1-\delta)}{2}
			\nm{d_{S}^{k,\epsilon,\delta}}^{2}.
		\end{alignedat}
	\end{equation}
 Dividing both sides by $2$ yields~\eqref{con1}.
	\par Next we prove \eqref{con2}. Since $z_{1}(\lambda^{k,\epsilon,\delta})$ is the minimizer of \eqref{subssss1}, we have
\begin{equation*}\small
	\begin{alignedat}{2}
		&~\quad &&2\left(\max_{\lambda\in\Delta_{m}}\dual{\nabla f_{\lambda/\alpha^{k}}(x^{k}),\frac12d_{S}^{k,\epsilon,\delta}}+g_{\lambda/\alpha^{k}}\left(Ax^{k}+\frac{1}{2}Ad_{S}^{k,\epsilon,\delta}\right)-g_{\lambda/\alpha^{k}}(Ax^k)\right)\\
		&\overset{(\ref{econ1})}{\leq}\quad ~~&& (1-\delta)\big(\dual{\nabla f_{\lambda^{k,\epsilon,\delta}/\alpha^{k}}(x^{k}),d_{S}^{k,\epsilon,\delta}}+ g_{\lambda^{k,\epsilon,\delta}/\alpha^{k}}\left(Ax^{k}+z_{1}(\lambda^{k,\epsilon,\delta})Av_{k,\epsilon}\right)\\
		&~\quad &&+g_{\lambda^{k,\epsilon,\delta}/\alpha^{k}}\left(Ax^{k}+z_{2}(\lambda^{k,\epsilon,\delta})A\tilde{u}_{k,\epsilon}\right)-2g_{\lambda^{k,\epsilon,\delta}/\alpha^{k}}(Ax^k)\big)\\
		&\overset{(\ref{e55})}{\leq}\quad && (1-\delta)\left(\dual{\nabla f_{\lambda^{k,\epsilon,\delta}/\alpha^{k}}(x^{k}),z_{1}(\lambda^{k,\epsilon,\delta})v_{k,\epsilon}}+ g_{\lambda^{k,\epsilon,\delta}/\alpha^{k}}\left(Ax^{k}+z_{1}(\lambda^{k,\epsilon,\delta})Av_{k,\epsilon}\right)-g_{\lambda^{k,\epsilon,\delta}/\alpha^{k}}(Ax^k)\right)\\
		&~~{\leq}\quad && (1-\delta)\big(\dual{\nabla f_{\lambda^{k,\epsilon,\delta}/\alpha^{k}}(x^{k}),z_{1}(\lambda^{k,\epsilon,\delta})v_{k,\epsilon}}+ g_{\lambda^{k,\epsilon,\delta}/\alpha^{k}}\left(Ax^{k}+z_{1}(\lambda^{k,\epsilon,\delta})Av_{k,\epsilon}\right)-g_{\lambda^{k,\epsilon,\delta}/\alpha^{k}}(Ax^k)\\
		&~\quad &&+\frac{q_{k}(v_{k,\epsilon})}{2}\nm{v_{k,\epsilon}}^{2}(z_{1}(\lambda^{k,\epsilon,\delta}))^{2}\big)\\
		&\overset{\mathclap{(\mathrm{any} ~0\leq z_{1}\leq1)}}{\leq}\quad && (1-\delta)\big(\dual{\nabla f_{\lambda^{k,\epsilon,\delta}/\alpha^{k}}(x^{k}),z_{1}v_{k,\epsilon}}+ g_{\lambda^{k,\epsilon,\delta}/\alpha^{k}}\left(Ax^{k}+z_{1}Av_{k,\epsilon}\right)-g_{\lambda^{k,\epsilon,\delta}/\alpha^{k}}(Ax^k)\\
		&~\quad &&+\frac{q_{k}(v_{k,\epsilon})}{2}\nm{v_{k,\epsilon}}^{2}(z_{1})^{2}\big)\\
		&\overset{\mathclap{(\mathrm{convexity~of~}g)}}{\leq}\quad &&(1-\delta)\big(z_{1}\left( \dual{\nabla f_{\lambda^{k,\epsilon,\delta}/\alpha^{k}}(x^{k}),v_{k,\epsilon}}+ g_{\lambda^{k,\epsilon,\delta}/\alpha^{k}}\left(Ax^{k}+Av_{k,\epsilon}\right)-g_{\lambda^{k,\epsilon,\delta}/\alpha^{k}}(Ax^k)\right)\\
		&~\quad &&+\frac{q_{k}(v_{k,\epsilon})}{2}\nm{v_{k,\epsilon}}^{2}(z_{1})^{2}\big)\\
		&{\hspace{1mm}\leq}\quad &&(1-\delta)\big(\max_{\lambda\in\Delta_{m}}z_{1}\left( \dual{\nabla f_{\lambda/\alpha^{k}}(x^{k}),v_{k,\epsilon}}+ g_{\lambda/\alpha^{k}}\left(Ax^{k}+Av_{k,\epsilon}\right)-g_{\lambda/\alpha^{k}}(Ax^k)\right)+\frac{q_{k}(v_{k,\epsilon})}{2}\nm{v_{k,\epsilon}}^{2}(z_{1})^{2}\big)\\
		&{\hspace{1mm}\leq}\quad &&(1-\delta)\big(z_{1}\frac{\alpha_{\min}}{\alpha_{\max}}\max_{\lambda\in\Delta_{m}}\left( \dual{\nabla f_{\lambda/\tilde{\alpha}^{k}}(x^{k}),v_{k,\epsilon}}+ g_{\lambda/\tilde{\alpha}^{k}}\left(Ax^{k}+Av_{k,\epsilon}\right)-g_{\lambda/\tilde{\alpha}^{k}}(Ax^k)\right)\\
		&~\quad &&+\frac{q_{k}(v_{k,\epsilon})}{2}\nm{v_{k,\epsilon}}^{2}(z_{1})^{2}\big)\\
		&\overset{(\ref{app1})}{\leq}\quad &&(1-\delta)\big(z_{1}\frac{\alpha_{\min}}{\alpha_{\max}}(1-\epsilon)\left( \dual{\nabla f_{\lambda_{PBB}^{k,\epsilon}/\tilde{\alpha}^{k}}(x^{k}),v_{k,\epsilon}}+ g_{\lambda_{PBB}^{k,\epsilon}/\tilde{\alpha}^{k}}\left(Ax^{k}+Av_{k,\epsilon}\right)-g_{\lambda_{PBB}^{k,\epsilon}/\tilde{\alpha}^{k}}(Ax^k)\right)\\
		&~\quad &&+\frac{q_{k}(v_{k,\epsilon})}{2c_{3}}\nm{v_{k,\epsilon}}_{P_{k}}^{2}(z_{1})^{2}\big)\\
		&\overset{(\ref{e57})}{\leq}\quad &&(1-\delta)\left((1-\epsilon)\frac{\alpha_{\min}}{\alpha_{\max}}z_{1}-\frac{c_{2}}{2c_{3}}(z_{1})^{2}\right)\big( \dual{\nabla f_{\lambda_{PBB}^{k,\epsilon}/\tilde{\alpha}^{k}}(x^{k}),v_{k,\epsilon}}+ g_{\lambda_{PBB}^{k,\epsilon}/\tilde{\alpha}^{k}}\left(Ax^{k}+Av_{k,\epsilon}\right)\\
		&~\quad &&-g_{\lambda_{PBB}^{k,\epsilon}/\tilde{\alpha}^{k}}(Ax^k)\big)\\
		&\overset{\mathclap{(\mathrm{any} ~0\leq z_{1}\leq1)}}{\leq}\quad &&\frac{(1-\delta)(1-\epsilon){\alpha_{\min}}}{2{\alpha_{\max}}}\min\left\{\frac{(1-\epsilon)c_{3}\alpha_{\min}}{c_{2}\alpha_{\max}},1\right\}\big( \dual{\nabla f_{\lambda_{PBB}^{k,\epsilon}/\tilde{\alpha}^{k}}(x^{k}),v_{k,\epsilon}}+ g_{\lambda_{PBB}^{k,\epsilon}/\tilde{\alpha}^{k}}\left(Ax^{k}+Av_{k,\epsilon}\right)\\
		&~\quad &&-g_{\lambda_{PBB}^{k,\epsilon}/\tilde{\alpha}^{k}}(Ax^k)\big),
	\end{alignedat}
\end{equation*}
where the last inequality follows by the facts that
$$\max\limits_{x\in[0,1]}-ax^{2}+bx\geq\frac{b}{2}\min\left\{\frac{b}{2a},1\right\}$$
with $a=c_{2}/(2c_{3})>0,b=(1-\epsilon){\alpha_{\min}}/{\alpha_{\max}}>0$ and 
$$\dual{\nabla f_{\lambda_{PBB}^{k,\epsilon}/\tilde{\alpha}^{k}}(x^{k}),v_{k,\epsilon}}+ g_{\lambda_{PBB}^{k,\epsilon}/\tilde{\alpha}^{k}}\left(Ax^{k}+Av_{k,\epsilon}\right)-g_{\lambda_{PBB}^{k,\epsilon}/\tilde{\alpha}^{k}}(Ax^k)<0.$$
Dividing both sides by $2$ yields~\eqref{con2}.
\end{proof}

\subsection{Inexact subspace preconditioned Barzilai-Borwein proximal gradient method}
To guarantee the sufficient descent conditions stated in Proposition \ref{suco}, 
for $v\in\{v_{k,\epsilon},\tilde{u}_{k,\epsilon},s_{k-1}\}$ we define
\begin{equation}\label{bb2}
	q_k(v):=\left\{
	\begin{aligned}
		&\max\left\{c_{1},\min\left\{\frac{\dual{v,B_k(v)}}{\nm{v}^{2}}, c_{2}\right\}\right\}, & \dual{v,B_k(v)}&>0, \\
		&\max\left\{c_{1},\min\left\{\frac{\nm{B_{k}(v)}}{\nm{v}}, c_{2}\right\}\right\}, &\dual{v,B_k(v)}&<0, \\
		& c_{1}, &  \dual{v,B_k(v)}&=0,
	\end{aligned}
	\right.
\end{equation}
where 
\begin{equation}\label{fd2}
	 B_k(v):=\frac{1}{\epsilon_{k}}(\nabla f_{\lambda^{k-1,\epsilon,\delta}/\alpha^{k-1}}(x^{k}+\epsilon_{k} v)-\nabla f_{\lambda^{k-1,\epsilon,\delta}/\alpha^{k-1}}(x^{k})),
\end{equation}
 $c_{1}$ and $c_{2}$ are the positive constants introduced in Proposition \ref{suco}.

The complete inexact subspace preconditioned proximal Barzilai-Borwein method is described as follows.

\begin{algorithm}[H]  
	\caption{Inexact subspace preconditioned proximal Barzilai-Borwein method}\label{ppg} 
	\begin{algorithmic}[1]
		\REQUIRE{$Ax^0\in\cap_{i\in[m]}\mathrm{dom}(g_{i})$, $\epsilon,\delta\in[0,1)$, $0<c_{1}\leq c_{2}$, $0<c_{3}\leq c_{4}$, $\sigma,\gamma\in(0,1)$}
		\FOR{$k=0,\cdots$}
		\STATE{Update $c_{3}\bm{I}_{n}\preceq P_{k}\preceq c_{4}\bm{I}_{n}$} 
		\STATE{Update $\tilde{\alpha}^{k}_{i}$ as in (\ref{talpha}) with $P=P_{k}$, $i\in[m]$} 
		\STATE{Compute $\lambda^{k,\epsilon}_{PBB}$ such that (\ref{app1}) holds and update $v_{k,\epsilon}$ as in (\ref{vke})} 
		\IF{$v_{k,\epsilon}=0$}
		\RETURN{$x^{k}$  }
		\ELSE{
			\IF{$k=0$}
			\STATE{Set $d^{k}=v_{k,\epsilon}$, $\lambda^{k,\epsilon,\delta}=\lambda^{k,\epsilon}_{PBB}$, $\alpha^{k}=\tilde{\alpha}^{k}$}
			\ELSE{
				\STATE{Compute $u_{k}$ as in (\ref{puk})}
				\STATE{Update $B_{k}(v_{k,\epsilon})$ and $q_{k}(v_{k,\epsilon})$ as in (\ref{fd2}) and (\ref{bb2}), respectively}
				\STATE{Update $$\tilde{u}_{k,\epsilon}:=u_{k}-\frac{\dual{u_{k},B_{k}(v_{k,\epsilon})}}{q_{k}(v_{k,\epsilon})\nm{v_{k,\epsilon}}^{2}}v_{k,\epsilon}$$}
				\STATE{Update $B_{k}(\tilde{u}_{k,\epsilon})$ and $q_{k}(\tilde{u}_{k,\epsilon})$ as in (\ref{fd2}) and (\ref{bb2}), respectively}
				\STATE{Update $q_{k}(s_{k-1})$ as in (\ref{bb2}) and then update $\alpha^{k}_{i}$ as in (\ref{alpha_k1}), $i\in[m]$}
				\STATE{Compute $\lambda^{k,\epsilon,\delta}$ such that (\ref{app2}) holds and update $d^{k,\epsilon,\delta}_{S}$ as in (\ref{ds})} 
				\STATE{Update $d_{k}=\frac{1}{2}d^{k,\epsilon,\delta}_{S}$}
			}
			\ENDIF
			\STATE{Compute the stepsize $t_{k}\in(0,1]$ in the following way:
				\begin{align*}
					t_{k}:=\max\big\{\gamma^{j}:j\in\mathbb{N}\cup\{0\},F^{A}_{i}\left(x^{k}+\gamma^{j}d_{k}\right)&-F^{A}_{i}(x^{k})
					\leq \sigma\gamma^{j}\big(\dual{\nabla f_{i}(x^{k}),d_{k}} \\
					&+ g_{i}(Ax^{k}+ Ad_{k})-g_{i}(Ax^{k})\big),~i\in[m].\big\}	
				\end{align*}
			}
			\STATE{Update $x^{k+1}:=x^{k}+t_{k}d_{k}$}
		}
		\ENDIF
		\ENDFOR
	\end{algorithmic}
\end{algorithm}
\begin{remark}
	Lines 4 and 16 constitute the main computational cost of Algorithm~\ref{ppg}, since each of these steps requires solving a subproblem. Fortunately, owing to the specific choice of $P_k$, the subproblem~\eqref{DP} in Line 4 can be solved by a projected-gradient method with a closed form gradient. The subproblem~\eqref{De} in Line 16 can also be solved by a projected-gradient method. In computing its gradient, the remaining challenge lies in solving the two one-dimensional subproblems~\eqref{subssss1} and~\eqref{subssss2}. Efficient algorithms for one-dimensional constrained and $\ell_1$-regularized subproblems have been developed in~\cite{CY2026}; we refer the reader to that work for further details.
\end{remark}
\subsection{Convergence analysis}
In this section we analyze the convergence properties of the proposed algorithm.
We first show that the stepsize produced by the line-search procedure admits a uniform lower bound.
\vspace{1mm}
\begin{lemma}
	Suppose that $f_{i}$ is $L_{i}$-smooth for $i\in[m]$. The stepsize generated by Algorithm \ref{ppg} has a lower bound:
	\begin{equation}\label{lbt}
		t_{\min}:=\min\left\{{2\gamma(1-\sigma)(1-\delta)c_{1}\alpha_{\min}}/{L_{\max}},1\right\},
	\end{equation} 
where $L_{\max}:=\max\{L_{i},i\in[m]\}$.
\end{lemma}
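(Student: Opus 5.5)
The plan is the standard Armijo backtracking argument. I will combine the descent lemma for each $L_i$-smooth $f_i$, convexity of $g_i$, and the sufficient descent condition (\ref{con1}) of Proposition~\ref{suco}. I take $k\geq1$, so $d_k=\frac12 d_S^{k,\epsilon,\delta}$. The case $k=0$, where $d_0=v_{0,\epsilon}$, is handled identically using (\ref{e57}) in place of (\ref{con1}). Write $\Delta_i^k:=\dual{\nabla f_i(x^k),d_k}+g_i(Ax^k+Ad_k)-g_i(Ax^k)$. Then (\ref{con1}) reads
\[
\Delta_i^k\leq-\frac{c_1(1-\delta)\alpha_i^k}{4}\nm{d_S^{k,\epsilon,\delta}}^2=-c_1(1-\delta)\alpha_i^k\nm{d_k}^2\leq -c_1(1-\delta)\alpha_{\min}\nm{d_k}^2 .
\]

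Next, for any $t\in(0,1]$ and each $i$, I use $L_i$-smoothness and convexity of $g_i\circ A$, namely $g_i(Ax^k+tAd_k)\leq (1-t)g_i(Ax^k)+t g_i(Ax^k+Ad_k)$. These give
\[
F_i^A(x^k+td_k)-F_i^A(x^k)\leq t\Delta_i^k+\frac{L_i t^2}{2}\nm{d_k}^2 .
\]
The Armijo condition $F_i^A(x^k+td_k)-F_i^A(x^k)\leq\sigma t\Delta_i^k$ therefore holds as soon as $(1-\sigma)t\Delta_i^k+\frac{L_it^2}{2}\nm{d_k}^2\leq0$. Inserting the bound on $\Delta_i^k$, it suffices that
\[
t\leq \frac{2(1-\sigma)(1-\delta)c_1\alpha_{\min}}{L_{\max}},
\]
and this is uniform in $i$ and $k$.

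Finally, there are two cases for the backtracking rule. If $t_k=1$, the bound is trivial. Otherwise $t_k/\gamma$ was rejected, so $t_k/\gamma$ must exceed the threshold above. Hence $t_k\geq 2\gamma(1-\sigma)(1-\delta)c_1\alpha_{\min}/L_{\max}$, and $t_k\geq t_{\min}$ in both cases.

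The main point needing care is the bookkeeping of the factor $\frac12$ between $d_k$ and $d_S^{k,\epsilon,\delta}$. The quantity $\frac{1}{4}\nm{d_S^{k,\epsilon,\delta}}^2$ in (\ref{con1}) equals exactly $\nm{d_k}^2$, and this is what produces the constant $2\gamma(1-\sigma)(1-\delta)c_1\alpha_{\min}/L_{\max}$. The $k=0$ step also needs a check. There (\ref{e57}) gives a bound with constant $c_3\alpha_{\min}$ rather than $c_1(1-\delta)\alpha_{\min}$, and the stated $t_{\min}$ covers it only if $c_3\geq(1-\delta)c_1$ or that first step is treated separately. In the worst case I would simply exclude $k=0$, since a single step does not affect the subsequent analysis.
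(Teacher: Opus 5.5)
Your argument is essentially the paper's proof: the Armijo test fails at $t_k/\gamma$, the descent lemma for $f_i$ is combined with convexity of $g_i$ along the segment, and then (\ref{con1}) is applied with $\|d_S^{k,\epsilon,\delta}\|^2=4\|d_k\|^2$, which gives the same threshold. The paper argues through the violating index $i_0$ and you argue through a uniform sufficient condition for acceptance; the two are equivalent.

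Your $k=0$ caveat addresses a case the paper's proof silently skips, but there is one slip. (\ref{e57}) alone only controls the $\lambda_{PBB}^{0,\epsilon}$-weighted sum, so a per-objective bound at $k=0$ also needs (\ref{app1}). That gives the constant $(1-\epsilon)c_3\alpha_{\min}$ rather than $c_3\alpha_{\min}$, so the stated $t_{\min}$ covers $k=0$ when $(1-\epsilon)c_3\geq(1-\delta)c_1$.
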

\vspace{1mm}
\begin{proof} It suffices to consider the case $t_k<1$, in which the backtracking procedure is activated. 
	In this situation the Armijo condition is violated for the trial stepsize $t_k/\gamma$, then there exists $i_{0}\in[m]$ such that
	\begin{equation}\label{E4.4}
		F^{A}_{i_{0}}\left(x^{k}+\frac{t_{k}}{\gamma}d_{k}\right)-F^{A}_{i_{0}}(x^{k})>\sigma \frac{t_{k}}{\gamma}\big(\dual{\nabla f_{i_{0}}(x^{k}),d_{k}} + g_{i_{0}}(Ax^{k}+ Ad_{k})-g_{i_{0}}(Ax^{k})\big).
	\end{equation}
	Since $f_{i}$ is $L_{i}$-smooth, for any $i\in[m]$ we have
	\begin{equation}\label{up}
		\begin{aligned}
			&~~~F^{A}_{i}\left(x^{k}+\frac{t_{k}}{\gamma}d_{k}\right)-F^{A}_{i}(x^{k})\\
			&\leq \frac{t_{k}}{\gamma}\dual{\nabla f_{i}(x^{k}),d_{k}} + g_{i}(Ax^{k}+\frac{t_{k}}{\gamma}Ad_{k}) - g_{i}(Ax^{k})+ \frac{L_{i}}{2}\left\|\frac{t_{k}}{\gamma}d_{k}\right\|^{2}\\
			&\leq\frac{t_{k}}{\gamma}\left(\dual{\nabla f_{i}(x^{k}),d_{k}} + g_{i}(Ax^{k}+ Ad_{k})-g_{i}(Ax^{k})\right)+\frac{L_{i}}{2}\left\|\frac{t_{k}}{\gamma}d_{k}\right\|^{2},
		\end{aligned}
	\end{equation}
	where the second inequality follows from the convexity of $g$ and the fact that ${t_{k}}/{\gamma}\in(0,1]$. Combining this inequality with \eqref{E4.4} gives
	$$(\sigma - 1)\left(\dual{\nabla f_{i_{0}}(x^{k}),d_{k}} + g_{i_{0}}(Ax^{k}+ Ad_{k})-g_{i_{0}}(Ax^{k})\right)\leq{\frac{L_{i_{0}}t_{k}}{2\gamma}}\left\|d_{k}\right\|^{2}.$$
	Using condition \eqref{con1} and the fact $d_{k}=\frac{1}{2}d^{k,\epsilon,\delta}_{S}$, we obtain
	\begin{equation}\label{et}
		t_{k}\geq\frac{2\gamma(1-\sigma)(1-\delta)c_{1}\alpha^{k}_{i_{0}}}{L_{i_0}}.
	\end{equation}
	Therefore $t_k \ge t_{\min}$, which completes the proof. \qed
\end{proof}

To establish global convergence, we impose the following standard assumption on the objective function.

\begin{assumption}\label{a2}
	For any $x^{0}\mathbb\in\mathrm{dom}F^{A}$, the level set $\mathcal{L}_{F^{A}}(x^0):=\{x:F^{A}(x)\preceq F^{A}(x^0)\}$ is compact.
\end{assumption}

Under this assumption we can prove the global convergence of the proposed algorithm.

\begin{theorem}
	Suppose that Assumption \ref{a2} holds and $f_{i}$ is $L_{i}$-smooth for $i\in[m]$. Let $\{x^{k}\}$ be the sequence of points generated by Algorithm \ref{ppg}. Then $\{x^k\}$ has at least one accumulation point, and any accumulation point $x^*$ is a Pareto critical point.
\end{theorem}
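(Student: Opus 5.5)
The argument is the standard sufficient-decrease one: telescope the Armijo inequalities, use Assumption~\ref{a2} to get boundedness and a summability bound, and pass the resulting estimate through Proposition~\ref{suco} and Proposition~\ref{p4.1} to conclude that the exact linear-operator-aware direction vanishes along a subsequence.

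\textbf{Step 1: monotonicity and an accumulation point.} By the line search and \eqref{con1} (or \eqref{e57} when $k=0$), every component satisfies $F^{A}_i(x^{k+1})\le F^{A}_i(x^k)$. Hence $\{x^k\}\subset\mathcal{L}_{F^{A}}(x^0)$, and this set is compact by Assumption~\ref{a2}. So an accumulation point $x^*$ exists.

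\textbf{Step 2: summability.} Each $F^{A}_i$ is lower semicontinuous and bounded below on the compact level set, and $\{F^{A}_i(x^k)\}$ is nonincreasing. Summing the Armijo inequalities gives
\[
\sum_{k}\; t_k\Big(-\dual{\nabla f_i(x^k),d_k}-g_i(Ax^k+Ad_k)+g_i(Ax^k)\Big)<\infty .
\]
The step-size lemma gives $t_k\ge t_{\min}$, so each bracket tends to $0$.

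\textbf{Step 3: the exact direction vanishes.} Use \eqref{con2} with $d_k=\frac12 d_S^{k,\epsilon,\delta}$ and $\alpha_i^k\ge\alpha_{\min}$. The bracket in Step 2 dominates
\[
c\,\alpha_{\min}\Big(-\dual{\nabla f_{\lambda_{PBB}^{k,\epsilon}/\tilde\alpha^k}(x^k),v_{k,\epsilon}}-g_{\lambda_{PBB}^{k,\epsilon}/\tilde\alpha^k}(Ax^k+Av_{k,\epsilon})+g_{\lambda_{PBB}^{k,\epsilon}/\tilde\alpha^k}(Ax^k)\Big).
\]
By \eqref{e57} this is at least $c\,\alpha_{\min}c_3\|v_{k,\epsilon}\|^2$. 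For $k=0$ the same bound follows directly from \eqref{e57}. Hence $v_{k,\epsilon}\to0$.

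To pass to the exact direction, avoid the loose continuity argument of Proposition~\ref{p4.1}(iv) and use the quantitative bound (iii). From \eqref{thetae} and \eqref{e57}, $\theta_\epsilon(x^k)\ge-\tfrac12\cdot$(a quantity tending to $0$), so $\theta_\epsilon(x^k)\to0$. Then (iii) gives $\theta(x^k)\to0$. Applying Lemma~\ref{ll2} to the exact problem yields $\theta(x^k)\le-\tfrac12\|v_k^P\|_{P_k}^2$, hence $v_k^P\to0$.

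\textbf{Step 4: criticality of $x^*$ (main obstacle).} Take a subsequence $x^k\stackrel{\mathcal K}{\to}x^*$. The data $P_k$ and $\tilde\alpha^k$ vary with $k$, but they are bounded ($c_3I\preceq P_k\preceq c_4I$ and $\alpha_{\min}\le\tilde\alpha_i^k\le\alpha_{\max}$). Passing to a further subsequence, $P_k\to\bar P\succ0$ and $\tilde\alpha^k\to\bar\alpha\succ0$. I would argue directly rather than cite Proposition~\ref{pop}(ii), since that result is stated for $g_i$ and not $g_i\circ A$.

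Suppose $x^*$ is not critical. Then there is $d$ with $\max_i F^{A\prime}_i(x^*;d)<0$. Convexity of $g_i$ gives $\max_i[\dual{\nabla f_i(x^*),\tau d}+g_i(Ax^*+\tau Ad)-g_i(Ax^*)]\le-\eta\tau$ for some small $\tau>0$ and $\eta>0$. Plugging $v=\tau d$ into \eqref{theta} gives, for large $k\in\mathcal K$,
\[
\theta(x^k)\le-\eta\tau/\alpha_{\max}+\tfrac{c_4}{2}\tau^2\|d\|^2+o(1).
\]
The $o(1)$ term requires continuity of $\nabla f_i$ together with
\[
\limsup_k\big(g_i(Ax^k+\tau Ad)-g_i(Ax^k)\big)\le g_i(Ax^*+\tau Ad)-g_i(Ax^*).
\]
This is the delicate point. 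Lower semicontinuity gives $\liminf_k g_i(Ax^k)\ge g_i(Ax^*)$, which handles the subtracted term. For the first term I would instead take the test direction $v=x^*+\tau d-x^k$, which keeps the evaluation point $Ax^*+\tau Ad$ fixed; the extra cost $\|x^*-x^k\|$ vanishes.

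Choosing $\tau$ small then gives $\limsup_{k\in\mathcal K}\theta(x^k)<0$, contradicting $\theta(x^k)\to0$. Therefore $x^*$ is Pareto critical.
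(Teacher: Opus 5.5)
Your proof is correct. Steps 1--3, up to $v_{k,\epsilon}\to0$, coincide with the paper's proof: Armijo decrease combined with \eqref{con2} and \eqref{e57}, compactness of $\mathcal{L}_{F^{A}}(x^0)$, and the lower bound $t_k\ge t_{\min}$.

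The routes diverge after that. The paper finishes with two citations: Proposition~\ref{p4.1}(iv) to pass from $v_{k,\epsilon}\to0$ to $v_k^P\to0$, and Proposition~\ref{pop}(ii) to conclude criticality of $x^*$. You keep stronger information instead, namely that the aggregated model decrease itself tends to zero. This gives $\theta_\epsilon(x^k)\to0$, and then $\theta(x^k)\to0$ by the two-sided bound of Proposition~\ref{p4.1}(iii). You then prove criticality directly, by contradiction, testing $\theta(x^k)$ at $v=x^*+\tau d-x^k$.

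What your route buys is robustness. The paper's proof of Proposition~\ref{p4.1}(iv) invokes continuity of $g$, which is not available when $g_i$ is only lower semicontinuous, e.g.\ the indicator functions in Section~\ref{sec6}. Proposition~\ref{pop}(ii) is stated for $g_i$ with metric $B_k$ and proved only by reference, so it has to be reinterpreted for $g_i\circ A$, $P_k$ and $\tilde\alpha^k$.

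Your fixed evaluation point $Ax^*+\tau Ad$ needs only lower semicontinuity, for the term $-g_i(Ax^k)$, together with the uniform bounds $c_3\bm I_n\preceq P_k\preceq c_4\bm I_n$ and $\alpha_{\min}\le\tilde\alpha_i^k\le\alpha_{\max}$. So your extraction $P_k\to\bar P$, $\tilde\alpha^k\to\bar\alpha$ is unnecessary. So is the intermediate conclusion $v_k^P\to0$, since $\theta(x^k)\to0$ already yields the contradiction. The paper's version is shorter and more modular, but it rests on those two weaker links.

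One small slip: at $k=0$, componentwise decrease does not follow from \eqref{e57} alone, because \eqref{e57} is a $\lambda$-aggregated bound. You need \eqref{app1} to pass from the aggregate to each component $i$.
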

\begin{proof}
	By Armijo line search, we deduce that $\{F^{A}(x^{k})\}$ is monotone decreasing and 
	\begin{equation}\label{des}
		\begin{aligned}
			&F_{i}^{A}(x^{k+1}) - F_{i}^{A}(x^{k})\\
			&\leq \sigma t_{k} \left(\dual{\nabla f_{i}(x^{k}),d_{k}}+g_{i}(Ax^{k}+ Ad_{k})-g_{i}(Ax^{k})\right)\\
			&\leq \sigma t_{k}c\alpha^{k}_{i}\left(\dual{\nabla f_{\lambda_{PBB}^{k,\epsilon}/\tilde{\alpha}^{k}}(x^{k}),v_{k,\epsilon}}+ g_{\lambda_{PBB}^{k,\epsilon}/\tilde{\alpha}^{k}}\left(Ax^{k}+Av_{k,\epsilon}\right)-g_{\lambda_{PBB}^{k,\epsilon}/\tilde{\alpha}^{k}}(Ax^k)\right)\\
			&\leq - \sigma t_{k}c\alpha_{\min}c_{3}\nm{v_{k,\epsilon}}^{2},
		\end{aligned}	
	\end{equation}
	where the second inequality follows by relation (\ref{con2}). Therefore $x^{k}\in\mathcal{L}_{F^{A}}(x^0)$ for all $k$, and hence $\{x^{k}\}$ has at least one accumulation point $x^*$ due to the compactness of $\mathcal{L}_{F^{A}}(x^0)$. 
	In particular, there exists an infinite index set $\mathcal{K}$ such that
	\[
	\lim_{k\in\mathcal{K}}x^{k}=x^* .
	\]
	Moreover, since $F^{A}$ is lower semicontinuous and $\mathcal{L}_{F^{A}}(x^0)$ is compact, the sequence $\{F^{A}(x^{k})\}$ is bounded below. 
	Together with the monotonicity of $\{F^{A}(x^{k})\}$, this implies that $\{F^{A}(x^{k})\}$ is a Cauchy sequence. Hence
	$$\lim_{k\rightarrow\infty}F^{A}(x^{k+1})-F^{A}(x^{k})=0.$$
	Combining this limit with \eqref{des} yields
	\begin{equation}\label{lim}
		\lim_{k\rightarrow\infty}t_{k}\nm{v_{k,\epsilon}}^{2}=0.
	\end{equation}
	Together with \eqref{et}, we obtain
	$$\mathop{\lim}\limits_{k\rightarrow\infty}v_{k,\epsilon}=0.$$
	By Proposition \ref{p4.1}, it follows that 
	$$\mathop{\lim}\limits_{k\rightarrow\infty}v_{k}^{P}=0.$$
	Finally, by Proposition \ref{pop}(ii), we conclude that $x^{*}$ is a Pareto critical point. \qed
\end{proof}

Next, we further strengthen the convergence result by establishing a linear convergence rate. Before presenting the convergence result, we introduce the following error bound condition.
\begin{definition}\label{def5.1}
	The vector-valued function $F^{A}$ satisfies a global error bound, if there exists a constant $\kappa$ such that 
	\begin{equation}\label{pl}
		u_{0}(x)\leq-\kappa\theta_{\epsilon}(x),~\forall x\in\mathbb{R}^{n},
	\end{equation}
	where $$u_{0}(x):=\sup\limits_{y\in\mathbb{R}^{n}}\min\limits_{i\in[m]}\{F_{i}^{A}(x)-F_{i}^{A}(y)\}$$
	and
	$\theta_{\epsilon}(x)$ is defined as in (\ref{thetae}) with $x^{k}=x$, $\tilde\alpha^{k}_{i}=\tilde\alpha_{i}(x)\in[\alpha_{\min},\alpha_{\max}]$, $P_{k}=P(x)$ and $c_{3}\bm{I}_{n}\preceq P(x)\preceq c_{4}\bm{I}_{n}$.
\end{definition}
\begin{remark}
	By Proposition \ref{p4.1}(iii), we have
	$$\theta_{\epsilon}(x)\leq\theta(x)\leq(1-\epsilon)^{2}\theta_{\epsilon}(x),$$
	where $\theta(x)$ is defined as in (\ref{theta}) with $x^{k}=x$, $\tilde\alpha^{k}_{i}=\tilde\alpha_{i}(x)\in[\alpha_{\min},\alpha_{\max}]$, $P_{k}=P(x)$ and $c_{3}\bm{I}_{n}\preceq P(x)\preceq c_{4}\bm{I}_{n}$.
Therefore, the error bound condition (\ref{pl}) is equivalent to the multiobjective PL-inequality \cite{TFY2023b}.
	
\end{remark}

In the following, we show that strong convexity of $f$ is a sufficient condition for the Definition \ref{def5.1}.
\begin{proposition}
	If $f_{i}$ is $\mu_{i}$-strongly convex for $i\in[m]$, then for all $x\in\mathbb{R}^{n}$
	$$	u_{0}(x)\leq-\kappa\theta_{\epsilon}(x)$$ 
	holds with $\kappa=\alpha_{\max}/r$ where $r:=\min\{{\mu_{\min}}/{(c_{4}\alpha_{\max})},1\}$ and $\mu_{\min}:=\min\{\mu_{i},i\in[m]\}$.
\end{proposition}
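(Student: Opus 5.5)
Following the strongly convex analysis for PGMO in \cite{TFY2023b}, I bound $u_0(x)$ above by a minimax expression in $y$ that can be compared with the scaled and preconditioned subproblem defining $\theta(x)$, and then pass from $\theta$ to $\theta_{\epsilon}$ via Proposition~\ref{p4.1}(iii).

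\textbf{Step 1 (bounding $u_0$).} Fix $x$ and write $\tilde\alpha_i=\tilde\alpha_i(x)$, $P=P(x)$. For any $y$ and $i\in[m]$, strong convexity of $f_i$ gives
$$F_i^{A}(x)-F_i^{A}(y)\le -\dual{\nabla f_i(x),y-x}-g_i(Ay)+g_i(Ax)-\frac{\mu_i}{2}\nm{y-x}^{2}.$$
Divide by $\tilde\alpha_i>0$ and use $\tilde\alpha_i\le\alpha_{\max}$ together with $\mu_i\ge\mu_{\min}$ to get
$$\frac{F_i^{A}(x)-F_i^{A}(y)}{\alpha_{\max}}\le \frac{F_i^{A}(x)-F_i^{A}(y)}{\tilde\alpha_i}\ \text{ when the left side is nonnegative}.$$
Only the case $u_0(x)\ge 0$ matters, since otherwise the claim is immediate because $\theta_\epsilon\le\theta\le 0$. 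Taking $\min_i$ and then $\sup_y$, and converting $\min_i$ of negatives to $-\max_i$, I obtain
$$u_0(x)\le -\alpha_{\max}\inf_{y}\max_{i}\Big\{\frac{\dual{\nabla f_i(x),y-x}+g_i(Ay)-g_i(Ax)}{\tilde\alpha_i}+\frac{\mu_{\min}}{2\alpha_{\max}}\nm{y-x}^{2}\Big\}.$$
The sign bookkeeping in this step needs care. Dividing by $\tilde\alpha_i$ versus $\alpha_{\max}$ has to be done so that the ratio inequality goes the right way; the cleanest route is to consider only $y$ with $\min_i(F_i^A(x)-F_i^A(y))\ge0$.

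\textbf{Step 2 (metric comparison and scaling).} Since $P\preceq c_4\bm I_n$, we have $\frac{\mu_{\min}}{2\alpha_{\max}}\nm{v}^{2}\ge \frac{r}{2}\nm{v}_{P}^{2}$ with $r=\min\{\mu_{\min}/(c_4\alpha_{\max}),1\}$. Writing $v=y-x$, the infimum above is therefore at least
$$\inf_v\max_i\Big\{\psi_i(v)+\frac r2\nm{v}_P^2\Big\},\qquad \psi_i(v):=\frac{\dual{\nabla f_i(x),v}+g_i(Ax+Av)-g_i(Ax)}{\tilde\alpha_i}.$$
Now I use the convexity trick from Proposition~\ref{p4.1}(iii). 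Substituting $v=rw$ with $r\in(0,1]$ and using $\psi_i(rw)\le r\psi_i(w)$ (which follows from convexity of $g_i$ and $\psi_i(0)=0$) gives the wrong direction for a lower bound. Instead I argue that $\max_i\psi_i(v)+\frac r2\nm{v}_P^2\ge \frac1r\big(\max_i\psi_i(rv)+\frac12\nm{rv}_P^2\big)$, which follows from $\psi_i(rv)\le r\psi_i(v)$. Taking the infimum over $v$ yields $\ge \theta(x)/r$.

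\textbf{Step 3 (conclusion).} Combining the steps gives $u_0(x)\le -\frac{\alpha_{\max}}{r}\theta(x)$. Proposition~\ref{p4.1}(iii) gives $\theta_\epsilon(x)\le\theta(x)\le 0$, so $-\theta(x)\le-\theta_\epsilon(x)$ and hence $u_0(x)\le-\kappa\theta_\epsilon(x)$ with $\kappa=\alpha_{\max}/r$.

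I expect Step 1, the passage from $\alpha_{\max}$ to $\tilde\alpha_i$ in the presence of the $\min_i$, to be the main obstacle. If the direct route fails, I would first multiply each objective difference by $\alpha_{\max}/\tilde\alpha_i\ge1$, which is valid on the nonnegative region, and check that restricting the supremum to that region loses nothing because $u_0(x)\ge0$.
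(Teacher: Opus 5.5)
Your proof is correct and follows essentially the same route as the paper: strong convexity, rescaling by $\alpha_{\max}/\tilde\alpha_i$, the metric bound $\|v\|^2\ge \|v\|_{P(x)}^2/c_4$, and the convexity inequality $g_i(Ax+rAv)-g_i(Ax)\le r\,(g_i(Ax+Av)-g_i(Ax))$. The differences are minor. The paper replaces $\min_i$ by the $\lambda^{\epsilon}_{PBB}$-weighted sum early and lands directly on $\theta_\epsilon$, whereas you keep the max over $i$, obtain the slightly stronger bound $u_0(x)\le -\frac{\alpha_{\max}}{r}\theta(x)$, and then invoke Proposition~\ref{p4.1}(iii). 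You also explicitly justify the rescaling step by restricting the supremum to $\{y:\min_i(F_i^A(x)-F_i^A(y))\ge 0\}$, a step the paper uses without comment.
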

\begin{proof}
	By the strong convexity of $f_{i}$, it follows that
	\begin{align*}
		u_{0}(x)&=\max\limits_{y\in\mathbb{R}^{n}}\min\limits_{i\in[m]}\{F_{i}^{A}(x)-F_{i}^{A}(y)\}\\
		&\leq\max\limits_{y\in\mathbb{R}^{n}}\min\limits_{i\in[m]}\{\dual{\nabla f_{i}(x),x-y}+g_{i}(Ax)-g_{i}(Ay)-\frac{\mu_{i}}{2}\nm{y-x}^{2}\}\\
		&=\max\limits_{v\in\mathbb{R}^{n}}\min\limits_{i\in[m]}\{\dual{\nabla f_{i}(x),-v}+g_{i}(Ax)-g_{i}(Ax+Av)-\frac{\mu_{i}}{2}\nm{v}^{2}\}\\
		&\leq\max\limits_{v\in\mathbb{R}^{n}}\min\limits_{i\in[m]}\{\alpha_{\max}\frac{\dual{\nabla f_{i}(x),-v}+g_{i}(Ax)-g_{i}(Ax+Av)}{\tilde{\alpha}_{i}}-\frac{\mu_{i}}{2}\nm{v}^{2}\}\\
		&\leq\alpha_{\max}\max\limits_{v\in\mathbb{R}^{n}}\min\limits_{i\in[m]}\{\frac{\dual{\nabla f_{i}(x),-v}+g_{i}(Ax)-g_{i}(Ax+Av)}{\tilde{\alpha}_{i}}\}-\frac{\mu_{\min}}{2\alpha_{\max}}\nm{v}^{2}\\
		&\leq\alpha_{\max}\max\limits_{v\in\mathbb{R}^{n}}\dual{\nabla f_{\lambda_{PBB}^{\epsilon}/\tilde{\alpha}}(x),-v}+g_{\lambda_{PBB}^{\epsilon}/\tilde{\alpha}}(Ax)-g_{\lambda_{PBB}^{\epsilon}/\tilde{\alpha}}(Ax+Av)-\frac{\mu_{\min}}{2c_{4}\alpha_{\max}}\nm{v}_{P(x)}^{2}\\
			&\leq\frac{\alpha_{\max}}{r}\max\limits_{v\in\mathbb{R}^{n}}\dual{\nabla f_{\lambda_{PBB}^{\epsilon}/\tilde{\alpha}}(x),-rv}+r\big(g_{\lambda_{PBB}^{\epsilon}/\tilde{\alpha}}(Ax)-g_{\lambda_{PBB}^{\epsilon}/\tilde{\alpha}}(Ax+Av)\big)-\frac{r^{2}}{2}\nm{v}_{P(x)}^{2}\\
			&\leq\frac{\alpha_{\max}}{r}\max\limits_{v\in\mathbb{R}^{n}}\dual{\nabla f_{\lambda_{PBB}^{\epsilon}/\tilde{\alpha}}(x),-rv}+g_{\lambda_{PBB}^{\epsilon}/\tilde{\alpha}}(Ax)-g_{\lambda_{PBB}^{\epsilon}/\tilde{\alpha}}(Ax+rAv)-\frac{r^{2}}{2}\nm{v}_{P(x)}^{2}\\
			&=\frac{\alpha_{\max}}{r}\max\limits_{v\in\mathbb{R}^{n}}\dual{\nabla f_{\lambda_{PBB}^{\epsilon}/\tilde{\alpha}}(x),-v}+g_{\lambda_{PBB}^{\epsilon}/\tilde{\alpha}}(Ax)-g_{\lambda_{PBB}^{\epsilon}/\tilde{\alpha}}(Ax+Av)-\frac{1}{2}\nm{v}_{P(x)}^{2}\\
			&=-\frac{\alpha_{\max}}{r}\left(\dual{\nabla f_{\lambda_{PBB}^{\epsilon}/\tilde{\alpha}}(x),v_{\epsilon}(x)}+g_{\lambda_{PBB}^{\epsilon}/\tilde{\alpha}}(Ax+Av_{\epsilon}(x))-g_{\lambda_{PBB}^{\epsilon}/\tilde{\alpha}}(Ax)+\frac{1}{2}\nm{v_{\epsilon}(x)}_{P(x)}^{2}\right),
	\end{align*}
where $r=\min\{{\mu_{\min}}/{(c_{4}\alpha_{\max})},1\}$ and $\mu_{\min}=\min\{\mu_{i},i\in[m]\}$.
\end{proof}
\begin{theorem}\label{t5.2}
	Suppose that $F^{A}$ satisfies Definition \ref{def5.1} and $f_{i}$ is $L_{i}$-smooth for $i\in[m]$. Let $\{x^{k}\}$ be the sequence generated by Algorithm \ref{ppg}. Then
	$$u_{0}(x^{k+1})\leq \left(1- \frac{\sigma t_{\min}c\alpha_{\min}}{\kappa}\right)u_{0}(x^{k}),$$
	where $c$, $t_{\min}$ and $\kappa$ are defined as (\ref{c}), (\ref{lbt}) and (\ref{pl}), respectively. 
	
\end{theorem}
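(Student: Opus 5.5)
The plan is to combine the per-iteration sufficient decrease \eqref{des} with the error bound \eqref{pl}, in the standard way for multiobjective PL-type arguments in \cite{TFY2023b}.

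\textbf{Step 1 (one-step decrease in terms of the model value).} By the Armijo rule and \eqref{con2}, for every $i\in[m]$,
\[
F_i^A(x^{k+1})-F_i^A(x^k)\le \sigma t_k c\,\alpha_i^k\,\Phi_k ,
\]
where $\Phi_k:=\dual{\nabla f_{\lambda_{PBB}^{k,\epsilon}/\tilde\alpha^{k}}(x^{k}),v_{k,\epsilon}}+g_{\lambda_{PBB}^{k,\epsilon}/\tilde\alpha^{k}}(Ax^k+Av_{k,\epsilon})-g_{\lambda_{PBB}^{k,\epsilon}/\tilde\alpha^{k}}(Ax^k)$. By \eqref{e57}, $\Phi_k\le 0$. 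Using $\alpha_i^k\ge\alpha_{\min}$ and $t_k\ge t_{\min}$ from \eqref{lbt}, we get $F_i^A(x^{k+1})-F_i^A(x^k)\le \sigma t_{\min}c\,\alpha_{\min}\Phi_k$.

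Next I relate $\Phi_k$ to $\theta_\epsilon(x^k)$ from \eqref{thetae}. Since $\theta_\epsilon(x^k)=\Phi_k+\tfrac12\nm{v_{k,\epsilon}}_{P_k}^2$ and \eqref{e57} gives $\tfrac12\nm{v_{k,\epsilon}}^2_{P_k}\le -\tfrac12\Phi_k$, we have $\theta_\epsilon(x^k)\le \tfrac12\Phi_k\le 0$. The inequality $\Phi_k\le\theta_\epsilon(x^k)$ is immediate because the quadratic term is nonnegative, and this is the direction needed. Hence, for every $i\in[m]$,
\[
F_i^A(x^{k+1})\le F_i^A(x^k)+\sigma t_{\min}c\,\alpha_{\min}\,\theta_\epsilon(x^k).
\]

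\textbf{Step 2 (passing to $u_0$).} For any $y$, the bound in Step 1 gives
\[
\min_i\{F_i^A(x^{k+1})-F_i^A(y)\}\le \min_i\{F_i^A(x^k)-F_i^A(y)\}+\sigma t_{\min}c\alpha_{\min}\theta_\epsilon(x^k),
\]
because a common additive constant passes through the $\min$. Taking $\sup_y$ yields $u_0(x^{k+1})\le u_0(x^k)+\sigma t_{\min}c\alpha_{\min}\theta_\epsilon(x^k)$.

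\textbf{Step 3 (applying the error bound).} By \eqref{pl} at $x=x^k$, with $\tilde\alpha^k$ and $P_k$ in the admissible ranges (Algorithm~\ref{ppg} enforces $c_3\bm I_n\preceq P_k\preceq c_4\bm I_n$ and $\tilde\alpha^k_i\in[\alpha_{\min},\alpha_{\max}]$), we have $\theta_\epsilon(x^k)\le -u_0(x^k)/\kappa$. Substituting gives
\[
u_0(x^{k+1})\le \Bigl(1-\tfrac{\sigma t_{\min}c\alpha_{\min}}{\kappa}\Bigr)u_0(x^k).
\]

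\textbf{Main obstacle.} The delicate point is the bookkeeping in the first step. The scalar $\Phi_k$ must be controlled by $\theta_\epsilon(x^k)$ with the right sign, and the factor $\alpha_i^k$ from \eqref{con2} must be replaced uniformly by $\alpha_{\min}$; this is valid because the quantity it multiplies is nonpositive. I also need $u_0(x^k)\ge0$, which follows by taking $y=x^k$, and the well-definedness of $u_0$ (finiteness) under the error-bound hypothesis. A further point is that Definition~\ref{def5.1} must be applied with exactly the $\lambda^{k,\epsilon}_{PBB}$ and $v_{k,\epsilon}$ produced by the algorithm at $x^k$; I will read the definition as holding for the algorithm's approximate quantities. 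For $k=0$, where $d_0=v_{0,\epsilon}$ rather than $\tfrac12 d_S$, the same estimate follows directly from \eqref{e57} with an even better constant.
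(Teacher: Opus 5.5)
Your proposal is correct and follows the paper's proof: Armijo plus \eqref{con2}, replacing $t_k\alpha_i^k$ by $t_{\min}\alpha_{\min}$ using $\Phi_k\le 0$, the relaxation $\Phi_k\le\theta_\epsilon(x^k)$, the error bound \eqref{pl}, and then $\min_i$ and $\sup_y$ (the paper applies \eqref{pl} before taking the supremum, which makes no difference). Your $k=0$ aside, a case the paper does not treat separately, is right in outcome but needs two corrections: the per-objective bound there comes from \eqref{app1}, not \eqref{e57} alone; and the stepsize lemma, proved via \eqref{con1}, does not literally cover $d_0=v_{0,\epsilon}$, though redoing it with \eqref{app1} gives $t_0\ge\min\{2\gamma(1-\sigma)(1-\epsilon)c_3\alpha_{\min}/L_{\max},1\}$, and one checks $t_0(1-\epsilon)\ge t_{\min}c$, so the rate still holds at $k=0$.
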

\begin{proof}
	By direct calculation, we have
	\begin{align*}
		&F^{A}_{i}(x^{k+1}) - F^{A}_{i}(x^{k})\\
		 &\leq \sigma t_{k}c\alpha^{k}_{i}\left(\dual{\nabla f_{\lambda_{PBB}^{k,\epsilon}/\tilde{\alpha}^{k}}(x^{k}),v_{k,\epsilon}}+ g_{\lambda_{PBB}^{k,\epsilon}/\tilde{\alpha}^{k}}\left(Ax^{k}+Av_{k,\epsilon}\right)-g_{\lambda_{PBB}^{k,\epsilon}/\tilde{\alpha}^{k}}(Ax^k)\right)\\
		&\leq \sigma t_{\min}c\alpha_{\min}\left(\dual{\nabla f_{\lambda_{PBB}^{k,\epsilon}/\tilde{\alpha}^{k}}(x^{k}),v_{k,\epsilon}}+ g_{\lambda_{PBB}^{k,\epsilon}/\tilde{\alpha}^{k}}\left(Ax^{k}+Av_{k,\epsilon}\right)-g_{\lambda_{PBB}^{k,\epsilon}/\tilde{\alpha}^{k}}(Ax^k)\right)\\
		&\leq  \sigma t_{\min}c\alpha_{\min}\theta_{\epsilon}(x^{k})\\
		&\leq - \frac{\sigma t_{\min}c\alpha_{\min}}{\kappa}u_{0}(x^{k}).
	\end{align*}
	 Rearranging and taking the minimum and supremum with respect to $i\in[m]$
	and $y\in\mathbb{R}^{n}$ on both sides, respectively, we obtain
	$$\sup\limits_{y\in\mathbb{R}^{n}}\min\limits_{i\in[m]}\{F_{i}^{A}(x^{k+1})-F_{i}^{A}(y)\}\leq\sup\limits_{y\in\mathbb{R}^{n}}\min\limits_{i\in[m]}\{F_{i}^{A}(x^{k})-F_{i}^{A}(y)\} - \frac{\sigma t_{\min}c\alpha_{\min}}{\kappa}u_{0}(x^{k}).$$
	The desired result follows.
	
\end{proof}
\begin{remark}
	Assume that $f_{i}$ is $\mu_{i}$-strongly convex for $i\in[m]$ in Theorem \ref{t5.2}, the linear convergence rate of Algorithm \ref{ppg} is \begin{small}
	\begin{equation*}
	\mathcal{O}\left(\left(1-\frac{\sigma(1-\delta)(1-\epsilon){\alpha_{\min}^{2}}}{4{\alpha_{\max}^{2}}}\min\left\{\frac{(1-\epsilon)c_{3}\alpha_{\min}}{c_{2}\alpha_{\max}},1\right\}\min\left\{\frac{2\gamma(1-\sigma)(1-\delta)c_{1}\alpha_{\min}}{L_{\max}},1\right\}\min\left\{\frac{\mu_{\min}}{c_{4}\alpha_{\max}},1\right\}\right)^{k}\right).
	\end{equation*}
\end{small}
\end{remark}

\section{Numerical experiments}\label{sec6}
In this section, we report numerical results to evaluate the performance of the proposed inexact subspace preconditioned proximal Barzilai-Borwein method. All experiments were implemented in Python 3.11 and performed on a personal computer equipped with an Intel Core i9-14900HX processor and 64 GB of RAM.

We consider three classes of multiobjective composite optimization problems: $\ell_1$ regularization problems, structured $\ell_1$ regularization problems, and linear constrained problems. In all test problems, the smooth components are ill-conditioned quadratic functions of the form
$$f_{i}(x)=\frac{1}{2}\left\langle x,A_{i}x\right\rangle + \left\langle b_{i},x\right\rangle,~i=1,2,$$
where each $A_i$ is symmetric positive definite. More precisely, $A_i$ is generated as $A_i=H_iD_iH_i^\top$, where $H_i$ is a random orthogonal matrix obtained from the QR factorization of a Gaussian random matrix, and $D_i$ is a diagonal matrix whose eigenvalues are linearly distributed so that the prescribed condition number is attained. The vector $b_i$ is generated with entries uniformly distributed in $[-n,n]$. We consider five quadratic test instances, denoted by QPa--QPe, with dimensions $n=10,100,1000$ and condition numbers ranging from $10^3$ to $10^5$, as shown in Table \ref{tab1}. The second and third columns present the dimension of the variables and condition numbers, respectively. While $x_L$ and $x_U$ represent the lower and upper bounds of the variables, respectively. 
\begin{table}[H]
	\begin{center}
		\caption{Description of quadratic problems.}\label{tab1}
	\vspace{-2mm}
	\end{center}
	\small
	\centering
	\begin{tabular}{clrlclrlrl}
		\hline
		\multicolumn{1}{l}{Problem} &  & n   &           & $(\kappa_{1},\kappa_{2})$ &           & \multicolumn{1}{c}{$x_{L}$} &  & \multicolumn{1}{c}{$x_{U}$} &  \\ \hline
		QPa                         &  & 10  &           & $(10^{3},10^{3})$ &           & 10{[}-1,...,-1{]}      &  & 10{[}1,...,1{]}        &  \\
		QPb                         &  & 10  &           & $(10^{4},10^{4})$ &           & 10{[}-1,...,-1{]}      &  & 10{[}1,...,1{]}        &  \\
		QPc                         &  & 100 &           &  $(10^{4},10^{4})$ &           & 100{[}-1,...,-1{]}     &  & 100{[}1,...,1{]}       &  \\
		QPd                         &  & 100 &           & $(10^{5},10^{5})$ &           & 100{[}-1,...,-1{]}     &  & 100{[}1,...,1{]}       &  \\
		QPe                         &  & 1000 &           & $(10^{5},10^{5})$ &           & 1000{[}-1,...,-1{]}     &  & 1000{[}1,...,1{]}       &  \\ \hline
	\end{tabular}
\end{table}

We compare the following two methods:
	\begin{itemize}
		\item IPPBB: the inexact linear-operator-aware-preconditioned proximal Barzilai-Borwein method (Algorithm \ref{ppg} without subspace strategy).
		\item ISPPBB: the inexact subspace preconditioned proximal Barzilai-Borwein method (Algorithm \ref{ppg}).
	\end{itemize}
	For the parameters in tested methods, we set $c_{1}=\alpha_{\min}=10^{-3}$ and $c_{2}=\alpha_{\max}=10^{3}$ to truncate the Barzilai-Borwein's parameter. In line search, we set $\sigma_{1}=10^{-4},~\sigma_{2}=0.1$. For each problem, we used the same initial points for different tested algorithms. The initial points were randomly selected within the specified lower and upper bounds. The dual subproblems of different algorithms were solved by the spectral projected gradient method with warm start, where the dual solution obtained at the previous outer iteration was used as the initial point for the current dual subproblem. To guarantee fair comparison, we decided to let the algorithms run until one of the following stopping conditions was satisfied:
		\begin{itemize}
			\item the current solution is preconditioned $\varepsilon$-Pareto critical with $\varepsilon\leq10^{-3}$;
			\item the number of iterations reaches 2000.
	\end{itemize} 
	All reported results are averaged over 200 runs. We report the number of outer iterations, CPU time, and the average number of inner iterations required to compute $\lambda_{PBB}^{k,\epsilon}$ and $\lambda^{k,\epsilon,\delta}$. Performance profiles \cite{DM2002} based on iterations and CPU time illustrate overall performance, while the purity metric \cite{CMV2011} is employed to evaluate the quality of the obtained Pareto front. 
\subsection{$\ell_{1}$ regularization problems}
For $\ell_{1}$ regularization problems, the nonsmooth parts are described as follows:
$$g_{i}(x):=\frac{1}{n}\|x\|_{1},~i=1,2.$$
In this case, the linear operator is the identity matrix, i.e., $A=\bm I_n$, and the linear-operator-aware preconditioner reduces to $P=\bm I_n$. Thus, this group of experiments mainly tests the ability of the proposed subspace strategy to exploit curvature information for ill-conditioned nonsmooth problems without additional linear operators.
\begin{table}[h] 
	{\begin{center}
			\caption{Average number of iterations (iter), average CPU time (time ($ms$)), Average number of inner iterations required to compute $\lambda_{PBB}^{k,\epsilon}$ (iter$_{\epsilon}$) and $\lambda^{k,\epsilon,\delta}$ (iter$_{\epsilon,\delta}$) of tested algorithms on $\ell_{1}$ regularization problems.}\label{tab2}\vspace{-2mm}
		\end{center}
		\centering
		\resizebox{.99\columnwidth}{!}{
			\begin{tabular}{lrrrrrrrrrrrrrrrrr}
				\hline
				Problem &
				\multicolumn{3}{l}{IPPBB ($\epsilon=0.2$)} &
				&
				\multicolumn{4}{l}{ISPPBB ($\epsilon=\delta=0.2$)} 
				&
				\multicolumn{1}{l}{} &
				\multicolumn{3}{l}{IPPBB ($\epsilon=0.8$)}&
				\multicolumn{1}{l}{} &
				\multicolumn{4}{l}{ISPPBB ($\epsilon=\delta=0.8$)} \\ \cline{2-4} \cline{6-9} \cline{11-13} \cline{15-18}&
				\multicolumn{1}{r}{iter} &
				\multicolumn{1}{r}{time} &
				\multicolumn{1}{r}{iter$_{\epsilon}$} &
				\textbf{} &
				\multicolumn{1}{r}{iter} &
				\multicolumn{1}{r}{time} &
				\multicolumn{1}{r}{iter$_{\epsilon}$} &
				\multicolumn{1}{r}{iter$_{\epsilon,\delta}$} &
				\multicolumn{1}{r}{} &
				\multicolumn{1}{r}{iter} &
				\multicolumn{1}{r}{time} &
				\multicolumn{1}{r}{iter$_{\epsilon}$} &
				\multicolumn{1}{r}{} &
				\multicolumn{1}{r}{iter} &
				\multicolumn{1}{r}{time} &
				\multicolumn{1}{r}{iter$_{\epsilon}$} &
				\multicolumn{1}{r}{iter$_{\epsilon,\delta}$}  \\ \hline
				QPa & 181.71 & 40.16 & 1.29 &  & \textbf{83.49} & 41.15 & 1.65 & 0.91 &  & 190.12 & 45.05 & 1.18 &  & 85.23 & \textbf{37.06} & 1.52 & 0.50  \\
				QPb & 976.31 & 196.13 & 1.24 &  & \textbf{162.53} & 71.09 & 1.42 & 1.13 &  & 982.68 & 193.06 & 1.15 &  & 162.87 & \textbf{68.19} & 1.31 & 0.63  \\
				QPc & 504.12 & 130.03 & 1.71 &  & 206.51 & 144.10 & 2.96 & 1.07 &  & 484.93 & \textbf{105.27} & 1.40 &  & \textbf{198.20} & 119.50 & 2.50 & 0.54  \\
				QPd & 2000.00 & 592.44 & 0.48 &  & \textbf{383.06} & 203.93 & 1.92 & 1.25 &  & 2000.00 & 535.57 & 0.21 &  & 390.09 & \textbf{182.36} & 1.67 & 0.59  \\
				QPe & 2000.00 & 8749.81 & 0.32 &  & 445.58 & 2095.82 & 2.59 & 0.85 &  & 1997.74 & 8812.58 & 0.88 &  & \textbf{412.96} & \textbf{1834.35} & 2.30 & 0.45  \\
				\hline
			\end{tabular}
	}}
\end{table}

The results in Table \ref{tab2} show that ISPPBB substantially reduces the number of outer iterations compared with IPPBB. This improvement becomes especially significant for high-dimensional and severely ill-conditioned problems. For example, on QPd and QPe, IPPBB often reaches the maximum number of 2000 iterations, while ISPPBB terminates within several hundred iterations. When $\varepsilon=\delta=0.2$, ISPPBB requires 383.06 and 445.58 iterations on QPd and QPe, respectively; when $\varepsilon=\delta=0.8$, it requires 390.09 and 412.96 iterations, respectively. These results indicate that the subspace refinement can effectively accelerate convergence even when the nonsmooth term is non-differentiable.

In terms of CPU time, ISPPBB is also clearly advantageous on large-scale ill-conditioned instances. Although the subspace construction may introduce slight overhead on some small or medium-scale problems, the reduction in outer iterations leads to substantial time savings on the most difficult instances. For example, on QPe, the CPU time is reduced from more than 8700 ms for IPPBB to about 2095.82 ms and 1834.35 ms for ISPPBB under the two inexactness settings. The inner iteration counts remain small, showing that the inexact dual solution strategy and warm start are effective. The performance profiles and purity results in Fig. \ref{fig1} further demonstrate the overall superiority of ISPPBB, while Fig. \ref{fig2} shows that the proposed method produces high-quality Pareto front approximations on representative ill-conditioned instances.

\begin{figure}[H]
	\centering
	\subfigure[Iterations]
	{
		\begin{minipage}[H]{.3\linewidth}
			\centering
			\includegraphics[scale=0.21]{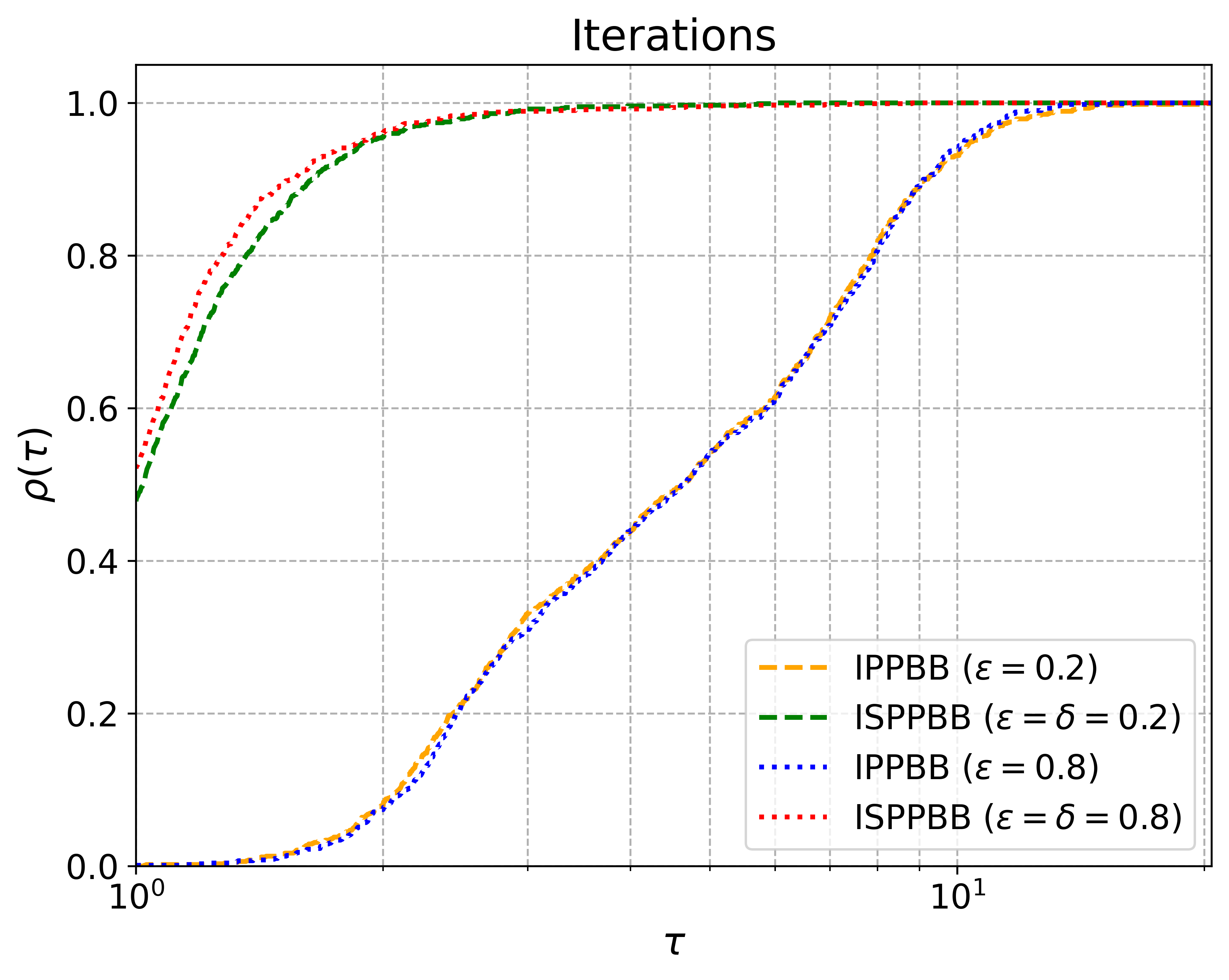} 
		\end{minipage}
	}
	\subfigure[CPU Time]
	{
		\begin{minipage}[H]{.3\linewidth}
			\centering
			\includegraphics[scale=0.21]{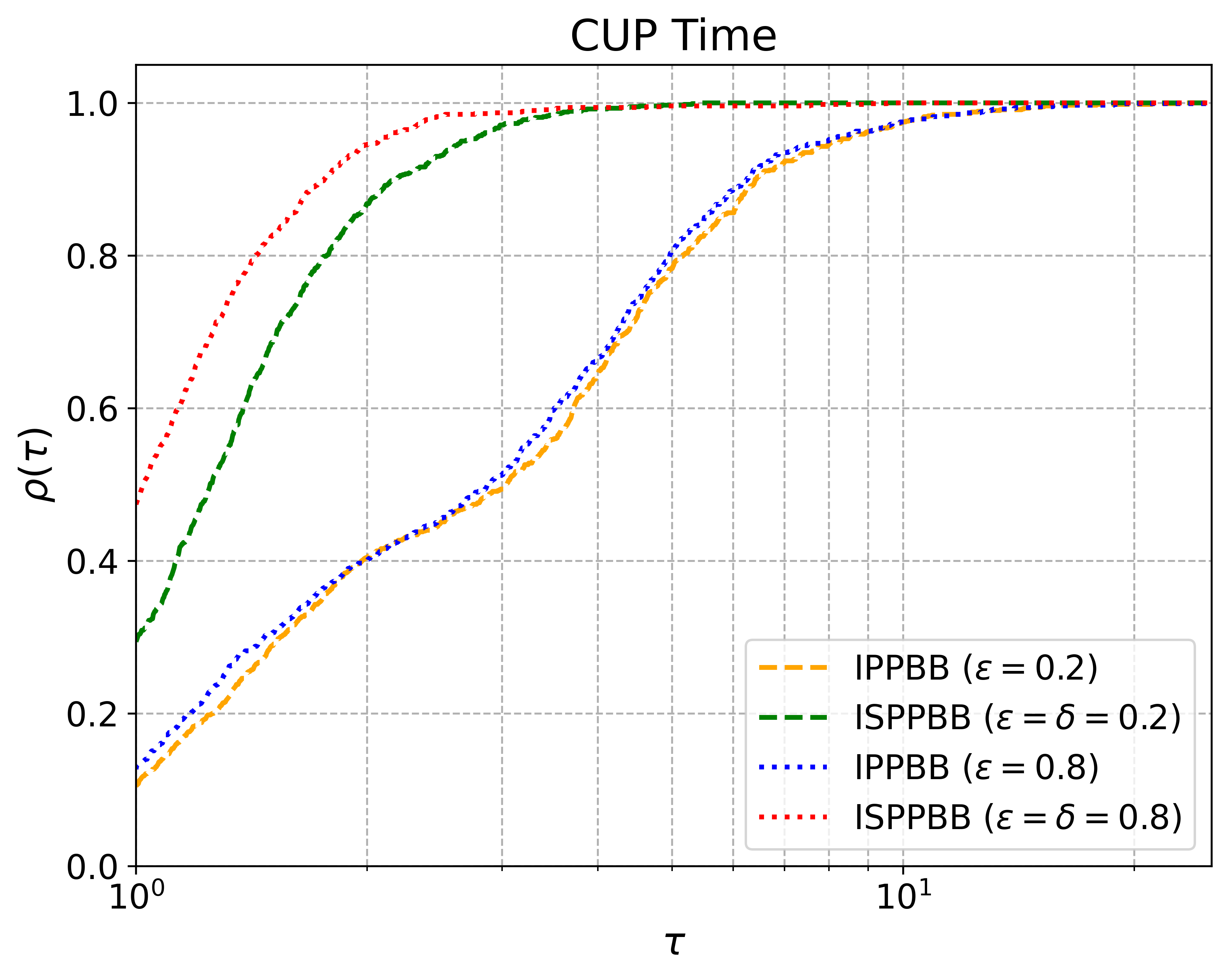} 
		\end{minipage}
	}
	\subfigure[Purity]
	{
		\begin{minipage}[H]{.3\linewidth}
			\centering
			\includegraphics[scale=0.21]{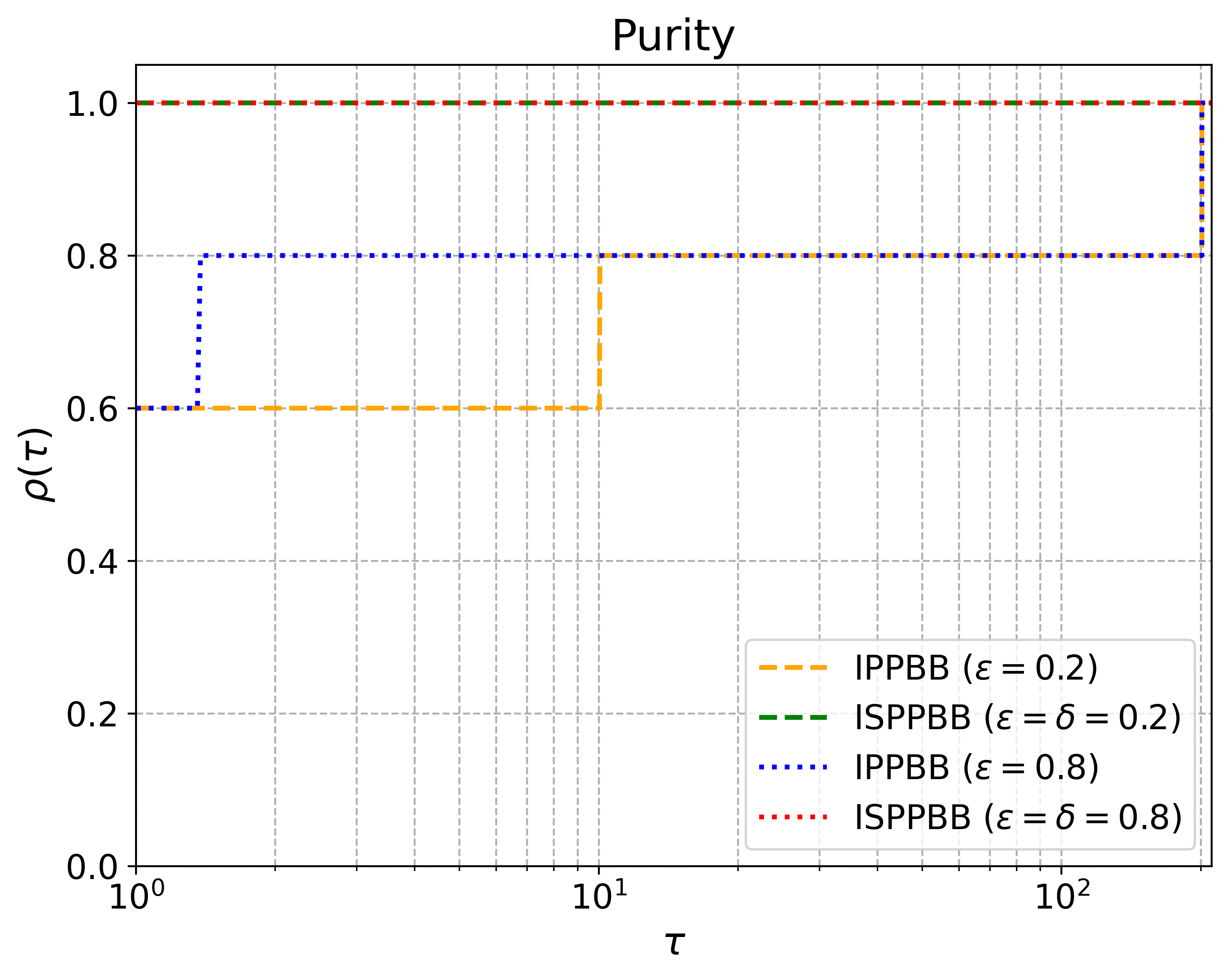} 
		\end{minipage}
	}
	\caption{Performance profiles and purity metric on $\ell_{1}$ regularization problems.}
	\label{fig1}
\end{figure}

\begin{figure}[H]
	\centering
	\subfigure[QPd]
	{
		\begin{minipage}[H]{.45\linewidth}
			\centering
			\includegraphics[scale=0.28]{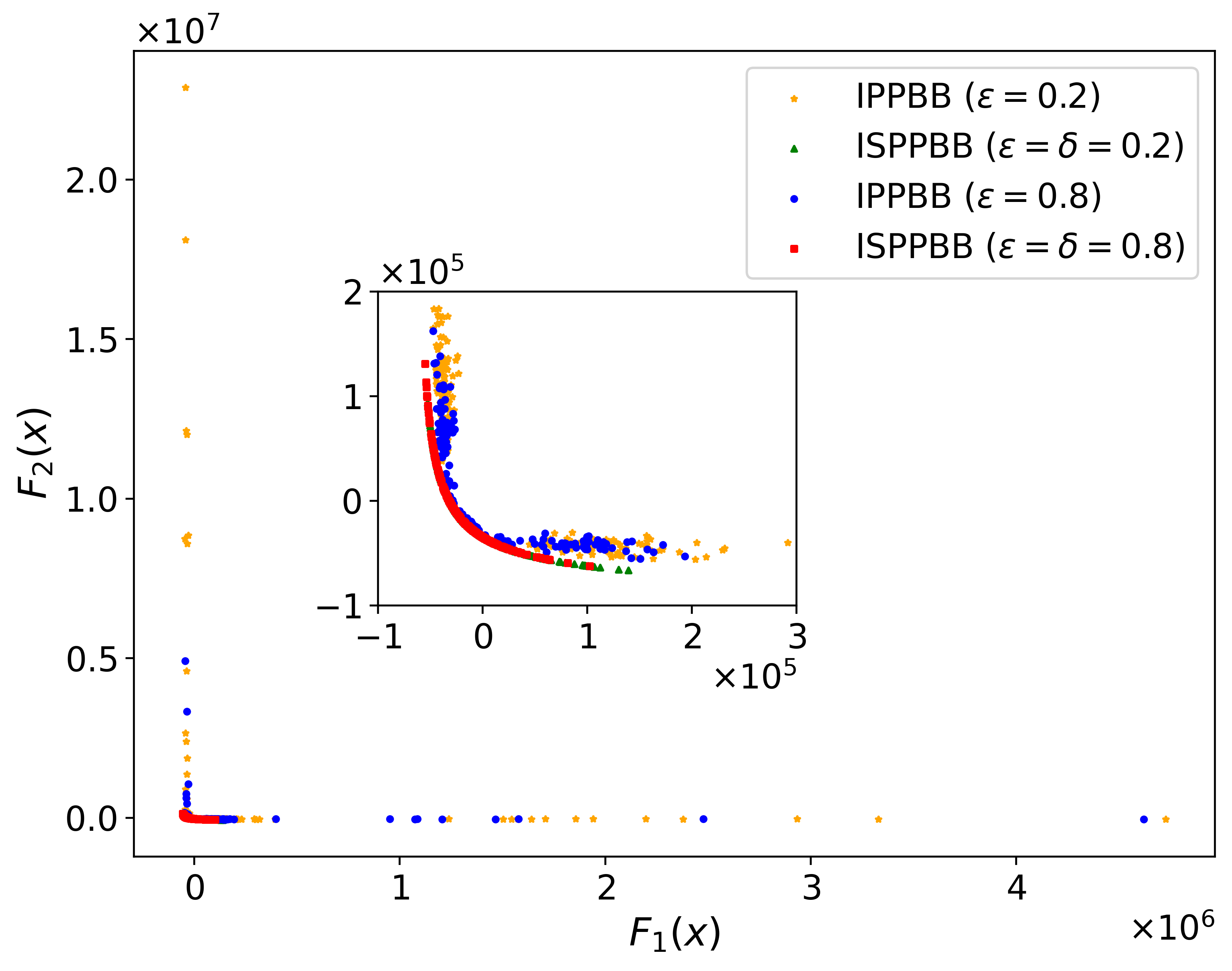} 
		\end{minipage}
	}
	\subfigure[QPe]
	{
		\begin{minipage}[H]{.45\linewidth}
			\centering
			\includegraphics[scale=0.28]{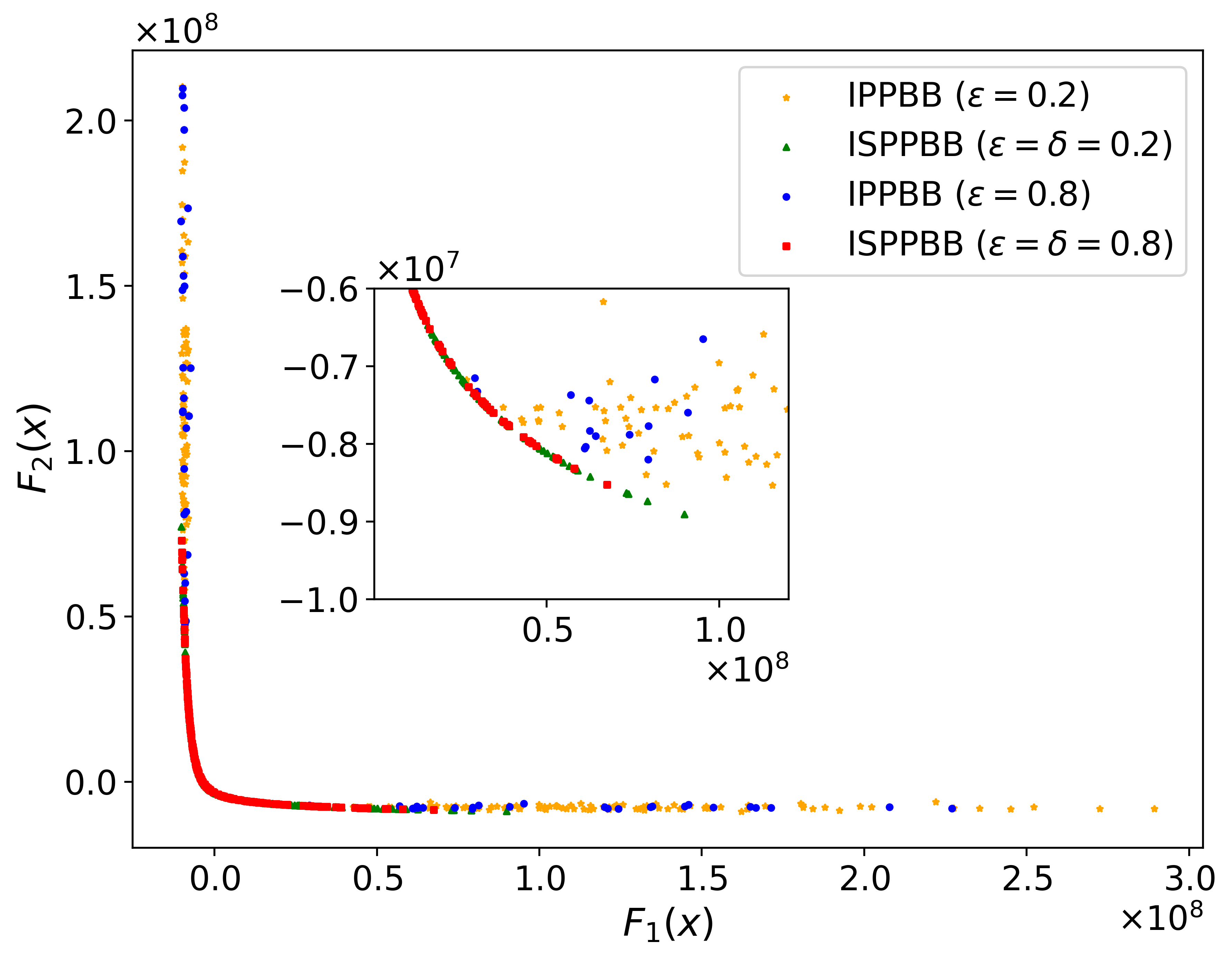} 
		\end{minipage}
	}

	\caption{Numerical results in value space obtained on $\ell_{1}$ regularization problems QPd and QPe.}\label{fig2}
\end{figure}

\subsection{Structured $\ell_{1}$ regularization problems}
For structured $\ell_{1}$ regularization problems, the nonsmooth parts are described as follows:
$$g_{i}(x):=\frac{1}{n}\|Ax\|_{1},~i=1,2,$$
where the matrix $A\in\mathbb{R}^{p\times n}$, $p=\left\lfloor \min\{{n}/{2},100\} \right\rfloor$, is constructed via singular value decomposition
\begin{equation}\label{A}
	A = U \Sigma V^\top,
\end{equation}
where $U$ and $V$ are random orthogonal matrices generated from Gaussian matrices via QR factorization. The singular values of $A$ are logarithmically spaced in $[1,\sigma_A]$ with $\sigma_A=\sqrt{50}$. To exploit the structure of $A$, we construct the linear-operator-aware preconditioner
\begin{equation}\label{P}
	P = V\left(\Lambda^\top\Lambda +
	\begin{bmatrix}
		\bm0_{m\times m}   & &  \\
		
		&  &\tilde{P}
	\end{bmatrix}\right)V^\top ,
\end{equation}
where $\tilde P\in\mathbb{S}_{++}^{n-m}$ is chosen as the identity matrix. The inverse $P^{-1}$ can therefore be computed analytically from the block structure. 
\begin{table}[h] 
	{\begin{center}
			\caption{Average number of iterations (iter), average CPU time (time ($ms$)), Average number of inner iterations required to compute $\lambda_{PBB}^{k,\epsilon}$ (iter$_{\epsilon}$) and $\lambda^{k,\epsilon,\delta}$ (iter$_{\epsilon,\delta}$) of tested algorithms on structured $\ell_{1}$ regularization problems.}\label{tab3}\vspace{-2mm}
		\end{center}
		\centering
		\resizebox{.99\columnwidth}{!}{
			\begin{tabular}{lrrrrrrrrrrrrrrrrr}
				\hline
				Problem &
				\multicolumn{3}{l}{IPPBB ($\epsilon=0.2$)} &
				&
				\multicolumn{4}{l}{ISPPBB ($\epsilon=\delta=0.2$)} 
				&
				\multicolumn{1}{l}{} &
				\multicolumn{3}{l}{IPPBB ($\epsilon=0.8$)}&
				\multicolumn{1}{l}{} &
				\multicolumn{4}{l}{ISPPBB ($\epsilon=\delta=0.8$)} \\ \cline{2-4} \cline{6-9} \cline{11-13} \cline{15-18}&
				\multicolumn{1}{r}{iter} &
				\multicolumn{1}{r}{time} &
				\multicolumn{1}{r}{iter$_{\epsilon}$} &
				\textbf{} &
				\multicolumn{1}{r}{iter} &
				\multicolumn{1}{r}{time} &
				\multicolumn{1}{r}{iter$_{\epsilon}$} &
				\multicolumn{1}{r}{iter$_{\epsilon,\delta}$} &
				\multicolumn{1}{r}{} &
				\multicolumn{1}{r}{iter} &
				\multicolumn{1}{r}{time} &
				\multicolumn{1}{r}{iter$_{\epsilon}$} &
				\multicolumn{1}{r}{} &
				\multicolumn{1}{r}{iter} &
				\multicolumn{1}{r}{time} &
				\multicolumn{1}{r}{iter$_{\epsilon}$} &
				\multicolumn{1}{r}{iter$_{\epsilon,\delta}$}  \\ \hline
				QPa & 585.64 & 109.89 & 1.47 &  & \textbf{237.01} & 97.51 & 1.69 & 0.59 &  & 586.08 & 110.61 & 1.36 &  & 244.56 & \textbf{91.44} & 1.55 & 0.40  \\
				QPb & 1892.48 & 329.35 & 1.21 &  & 684.44 & 269.63 & 1.47 & 0.64 &  & 1955.40 & 332.05 & 1.11 &  & \textbf{655.29} & \textbf{253.49} & 1.35 & 0.45  \\
				QPc & 1876.30 & 588.23 & 1.28 &  & 598.75 & 250.02 & 1.40 & 0.67 &  & 1839.72 & 562.40 & 1.11 &  & \textbf{541.00} & \textbf{208.04} & 1.23 & 0.39  \\
				QPd & 2000.00 & 598.60 & 0.55 &  & 1230.55 & 570.74 & 1.21 & 0.69 &  & 2000.00 & 579.62 & 0.22 &  & \textbf{1119.56} & \textbf{473.06} & 1.10 & 0.42  \\
				QPe & 2000.00 & 14327.80 & 0.33 &  & 1109.59 & 8845.36 & 1.68 & 0.75 &  & 2000.00 & 14535.64 & 0.47 &  & \textbf{965.87} & \textbf{7522.94} & 1.52 & 0.43  \\
				\hline
			\end{tabular}
	}}
\end{table}

This class of problems is more challenging than the standard $\ell_1$-regularized case because the nonsmooth term is composed with a nontrivial linear operator. The results in Table \ref{tab3} show that ISPPBB consistently requires fewer outer iterations than IPPBB on all tested instances. The advantage is particularly pronounced on QPc--QPe. For example, on QPe, IPPBB reaches the maximum number of 2000 iterations for both inexactness settings, while ISPPBB terminates after 1109.59 iterations for $\varepsilon=\delta=0.2$ and 965.87 iterations for $\varepsilon=\delta=0.8$. This confirms that the proposed subspace strategy remains effective even when the nonsmooth term has the structured form $g(Ax)$.

The CPU time results also show clear improvements in most cases. Since the structured $\ell_1$-regularized problems involve the matrix $A$, the linear-operator-aware preconditioner plays an important role in reducing the computational difficulty of the proximal-type subproblem. In addition, the average number of inner iterations is very small. In particular, the number of inner iterations for the subspace dual variable $\lambda^{k,\varepsilon,\delta}$ is usually less than one, indicating that the warm-started point often already satisfies the relaxed inexact descent condition. The performance profiles and purity metric in Fig. \ref{fig3} confirm the robustness of ISPPBB, while the value-space plots in Fig. \ref{fig4} illustrate that the proposed method obtains good Pareto front approximations on the structured $\ell_1$-regularized problems QPd and QPe.

\begin{figure}[H]
	\centering
	\subfigure[Iterations]
	{
		\begin{minipage}[H]{.3\linewidth}
			\centering
			\includegraphics[scale=0.21]{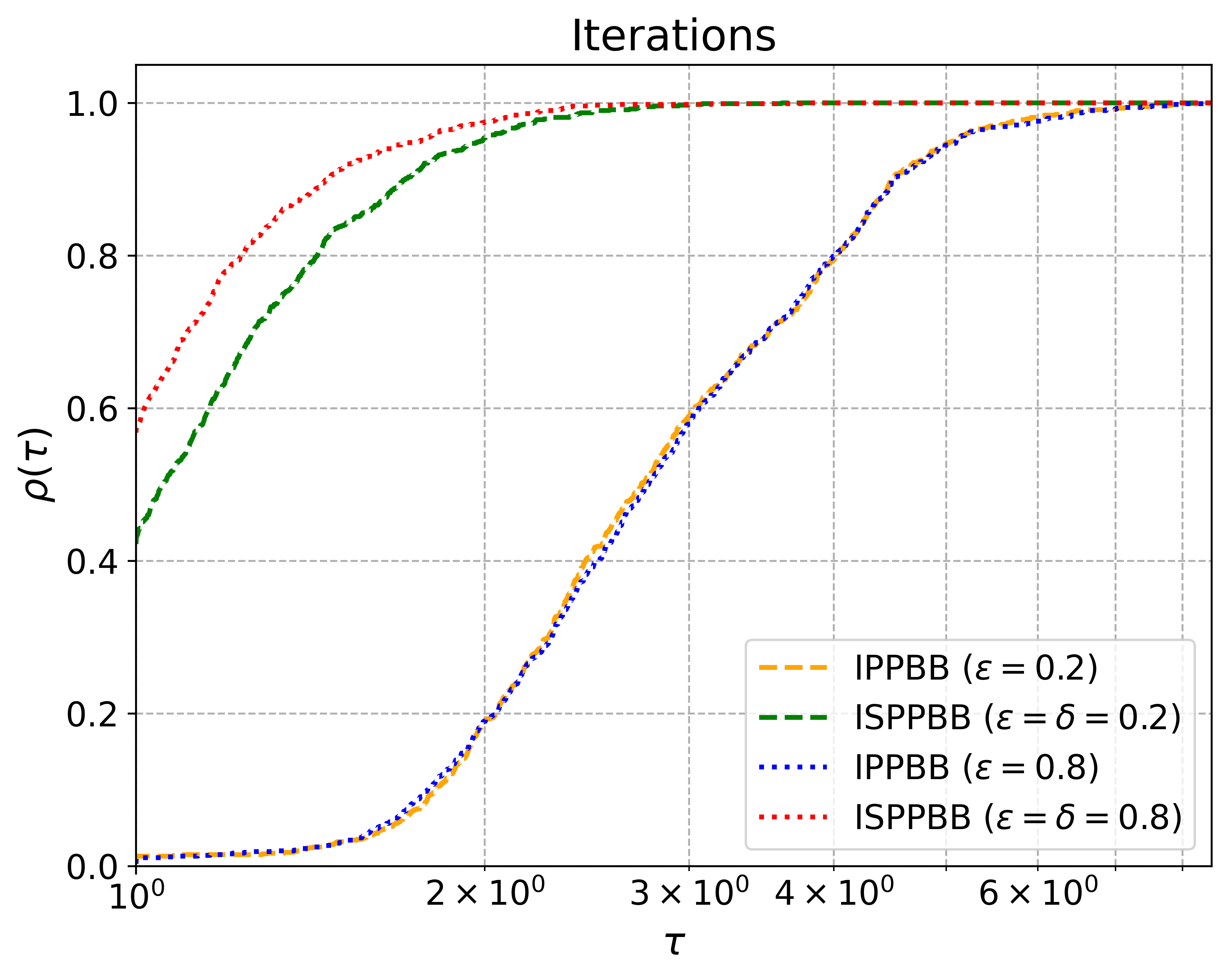} 
		\end{minipage}
	}
	\subfigure[CPU Time]
	{
		\begin{minipage}[H]{.3\linewidth}
			\centering
			\includegraphics[scale=0.21]{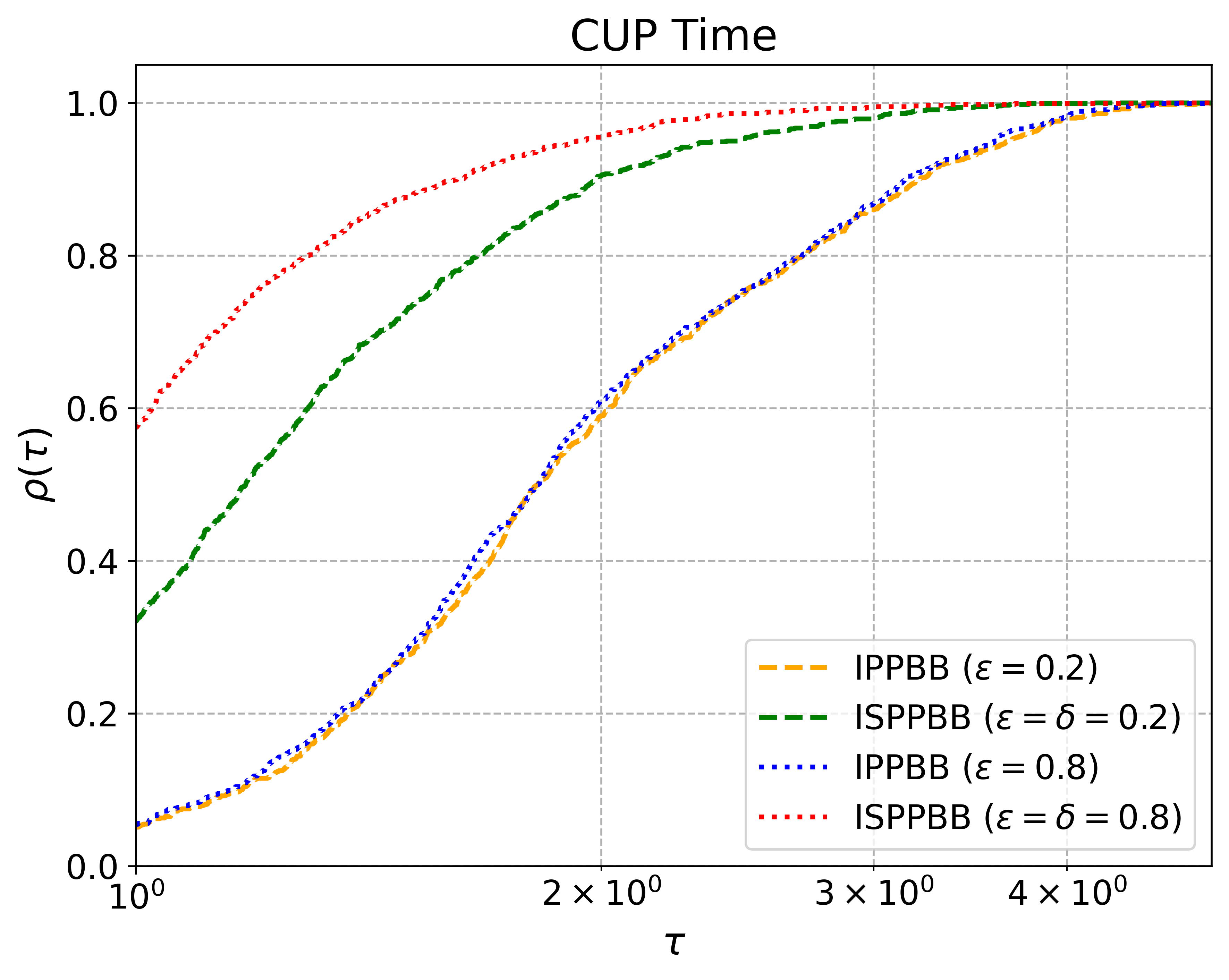} 
		\end{minipage}
	}
	\subfigure[Purity]
	{
		\begin{minipage}[H]{.3\linewidth}
			\centering
			\includegraphics[scale=0.21]{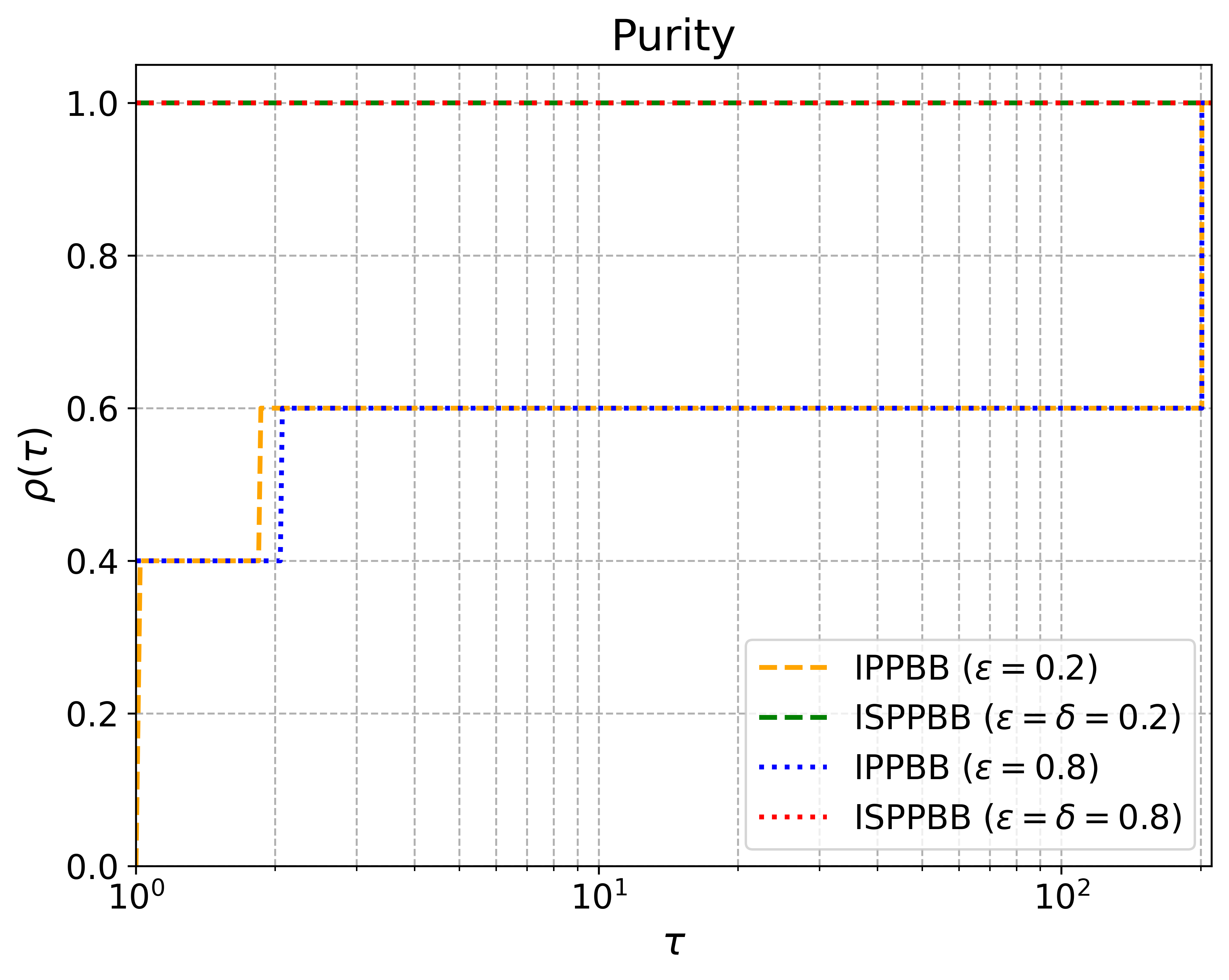} 
		\end{minipage}
	}
	\caption{Performance profiles and purity metric on structured $\ell_{1}$ regularization problems.}
	\label{fig3}
\end{figure}

\begin{figure}[H]
	\centering
	\subfigure[QPd]
	{
		\begin{minipage}[H]{.45\linewidth}
			\centering
			\includegraphics[scale=0.28]{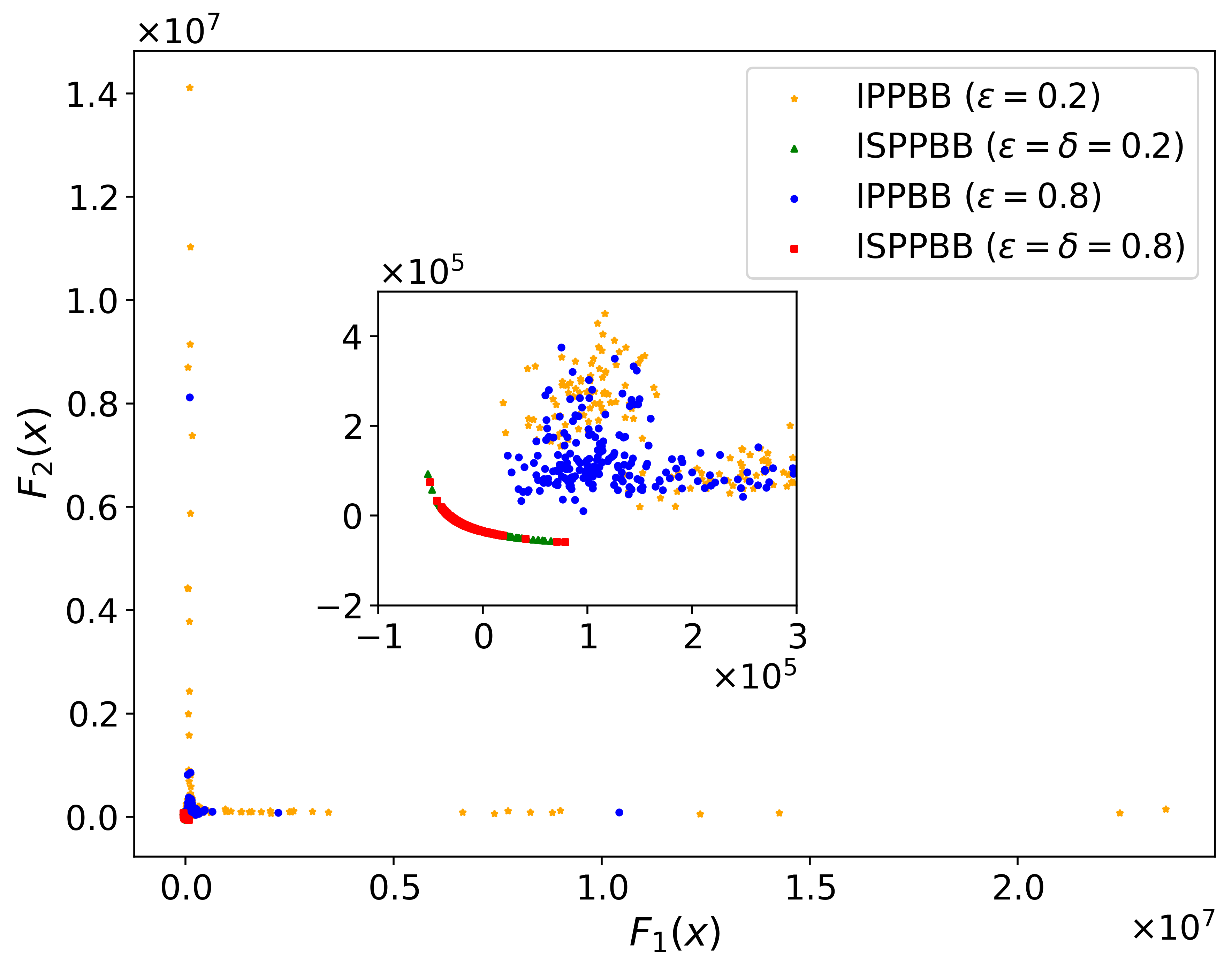} 
		\end{minipage}
	}
	\subfigure[QPe]
	{
		\begin{minipage}[H]{.45\linewidth}
			\centering
			\includegraphics[scale=0.28]{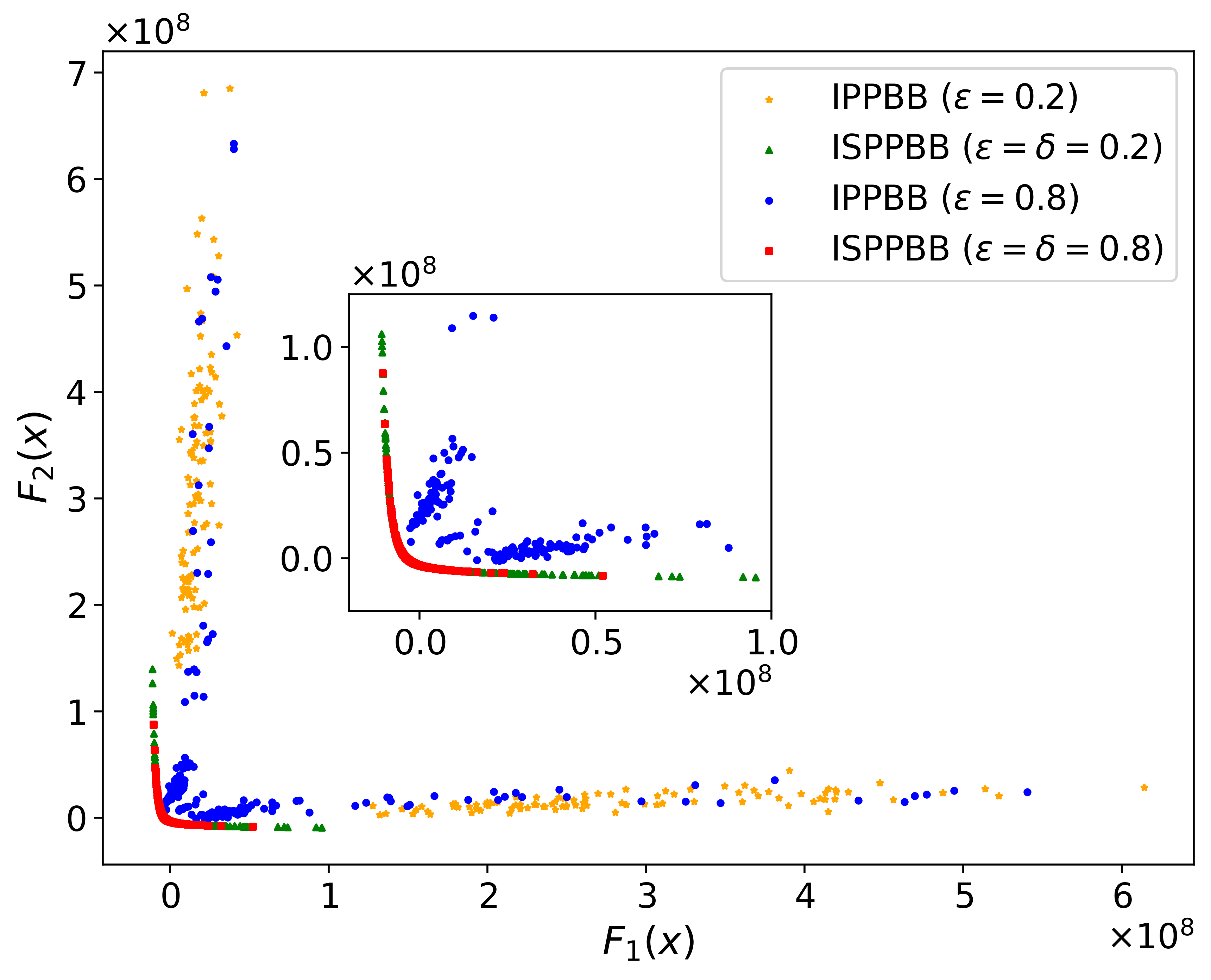} 
		\end{minipage}
	}
	
	\caption{Numerical results in value space obtained on structured $\ell_{1}$ regularization problems QPd and QPe.}\label{fig4}
\end{figure}

\subsection{Linear constrained problems}
The third group of experiments considers linearly constrained multiobjective problems. In this case, the nonsmooth term is the indicator function of a feasible set:
\begin{equation*}
	g_{i}(x)=\left\{
	\begin{aligned}
		&0, & x\in \mathcal{X}, \\
		&+\infty, &  \textrm{otherwise},
	\end{aligned}~~~i=1,2,
	\right.
\end{equation*}
where $\mathcal{X}:=\{x:c_{l}\preceq A_{1}x\preceq c_{u}, A_{2}x = c\}$. The matrix $A$ is generated as in the structured $\ell_1$-regularized case and then partitioned into
\begin{equation}
	A=
	\begin{bmatrix}
		A_1\\
		A_2
	\end{bmatrix},
\end{equation}
where $A_1\in\mathbb{R}^{p_{1}\times n}$, $A_2\in\mathbb{R}^{p_{2}\times n}$,
\begin{equation}
	p_{1}=\left\lfloor \frac{p}{2}\right\rfloor,
	\qquad
	p_{2}=p-p_{1}.
\end{equation}
The lower bound $c_l\in\mathbb{R}^{p_{1}}$ is randomly generated from $[0,\sigma_A]^{p_{1}}$, the upper bound is set as
\begin{equation}
	c_u=c_l+\sigma_A\mathbf{1}_{p_{1}},
\end{equation}
and the equality constraint vector $c\in\mathbb{R}^{p_2}$ is randomly generated from $[0,\sigma_A]^{p_2}$. The same linear-operator-aware preconditioner $P$ defined in \eqref{P} is used.

\begin{table}[h] 
	{\begin{center}
			\caption{Average number of iterations (iter), average CPU time (time ($ms$)), Average number of inner iterations required to compute $\lambda_{PBB}^{k,\epsilon}$ (iter$_{\epsilon}$) and $\lambda^{k,\epsilon,\delta}$ (iter$_{\epsilon,\delta}$) of tested algorithms on linear constrained problems.}\label{tab4}\vspace{-2mm}
		\end{center}
		\centering
		\resizebox{.99\columnwidth}{!}{
			\begin{tabular}{lrrrrrrrrrrrrrrrrr}
				\hline
				Problem &
				\multicolumn{3}{l}{IPPBB ($\epsilon=0.2$)} &
				&
				\multicolumn{4}{l}{ISPPBB ($\epsilon=\delta=0.2$)} 
				&
				\multicolumn{1}{l}{} &
				\multicolumn{3}{l}{IPPBB ($\epsilon=0.8$)}&
				\multicolumn{1}{l}{} &
				\multicolumn{4}{l}{ISPPBB ($\epsilon=\delta=0.8$)} \\ \cline{2-4} \cline{6-9} \cline{11-13} \cline{15-18}&
				\multicolumn{1}{r}{iter} &
				\multicolumn{1}{r}{time} &
				\multicolumn{1}{r}{iter$_{\epsilon}$} &
				\textbf{} &
				\multicolumn{1}{r}{iter} &
				\multicolumn{1}{r}{time} &
				\multicolumn{1}{r}{iter$_{\epsilon}$} &
				\multicolumn{1}{r}{iter$_{\epsilon,\delta}$} &
				\multicolumn{1}{r}{} &
				\multicolumn{1}{r}{iter} &
				\multicolumn{1}{r}{time} &
				\multicolumn{1}{r}{iter$_{\epsilon}$} &
				\multicolumn{1}{r}{} &
				\multicolumn{1}{r}{iter} &
				\multicolumn{1}{r}{time} &
				\multicolumn{1}{r}{iter$_{\epsilon}$} &
				\multicolumn{1}{r}{iter$_{\epsilon,\delta}$}  \\ \hline
				QPa & 73.34 & 12.41 & 1.59 &  & 44.01 & 12.11 & 1.55 & 0.69 &  & 73.30 & 11.76 & 1.47 &  & \textbf{40.82} & \textbf{11.34} & 1.38 & 0.25  \\
				QPb & 328.36 & 54.58 & 1.42 &  & 94.58 & 27.00 & 1.54 & 1.02 &  & 335.22 & 53.85 & 1.38 &  & \textbf{86.83} & \textbf{23.34} & 1.43 & 0.53  \\
				QPc & 271.04 & \textbf{54.49} & 1.86 &  & 171.82 & 80.56 & 1.81 & 0.88 &  & 294.21 & 55.31 & 1.67 &  & \textbf{169.12} & 68.16 & 1.62 & 0.27  \\
				QPd & 2000.00 & 557.95 & 0.80 &  & \textbf{652.62} & 511.88 & 3.37 & 1.24 &  & 2000.00 & 541.29 & 0.57 &  & 667.84 & \textbf{429.50} & 2.85 & 0.48  \\
				QPe & 2000.00 & 13028.71 & 0.31 &  & 706.76 & \textbf{6309.80} & 1.96 & 0.77 &  & 2000.00 & 14312.61 & 1.23 &  & \textbf{691.94} & 12710.20 & 2.05 & 0.53  \\
				\hline
			\end{tabular}
	}}
\end{table}

As shown in Table \ref{tab4}, ISPPBB again significantly reduces the number of outer iterations compared with IPPBB. On the more difficult instances QPd and QPe, IPPBB reaches the maximum iteration limit of 2000, whereas ISPPBB terminates within about 650--710 iterations. This demonstrates that the proposed subspace method is effective not only for regularized problems but also for multiobjective linearly constrained problems.

The CPU time results show that ISPPBB is particularly beneficial for difficult ill-conditioned instances, although the subspace overhead can sometimes affect the CPU time on smaller or moderate-scale problems. For QPe, ISPPBB substantially reduces the CPU time when $\varepsilon=\delta=0.2$, from 13028.71 ms to 6309.80 ms. The average number of inner iterations remains modest, which again supports the efficiency of the inexact stopping conditions and the warm-starting strategy. The performance profiles and purity metric in Fig. \ref{fig5} show the overall advantage of ISPPBB, and the value-space results in Fig. \ref{fig6} indicate that the proposed method can generate competitive Pareto front approximations under linear constraints.

\begin{figure}[H]
	\centering
	\subfigure[Iterations]
	{
		\begin{minipage}[H]{.3\linewidth}
			\centering
			\includegraphics[scale=0.21]{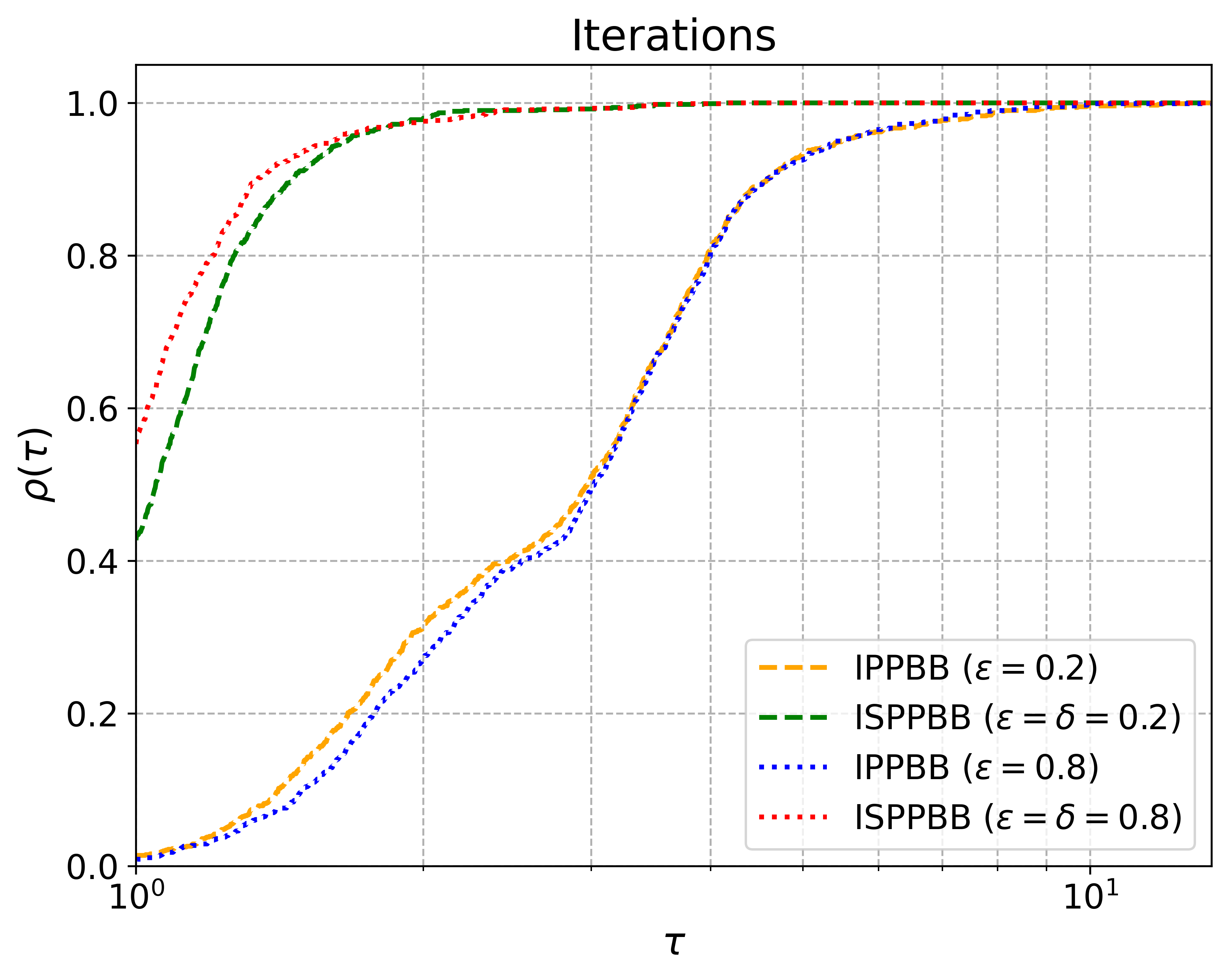} 
		\end{minipage}
	}
	\subfigure[CPU Time]
	{
		\begin{minipage}[H]{.3\linewidth}
			\centering
			\includegraphics[scale=0.21]{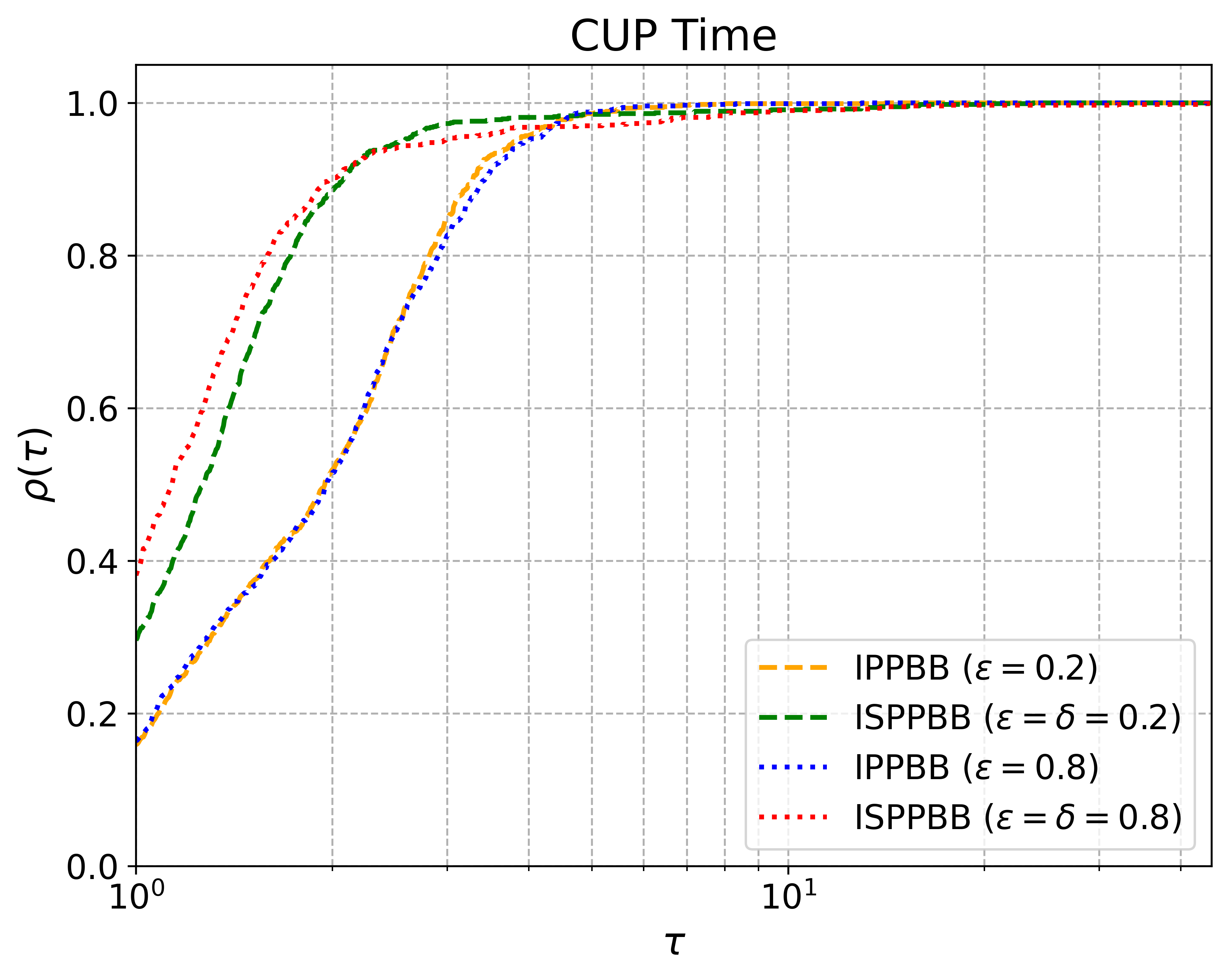} 
		\end{minipage}
	}
	\subfigure[Purity]
	{
		\begin{minipage}[H]{.3\linewidth}
			\centering
			\includegraphics[scale=0.21]{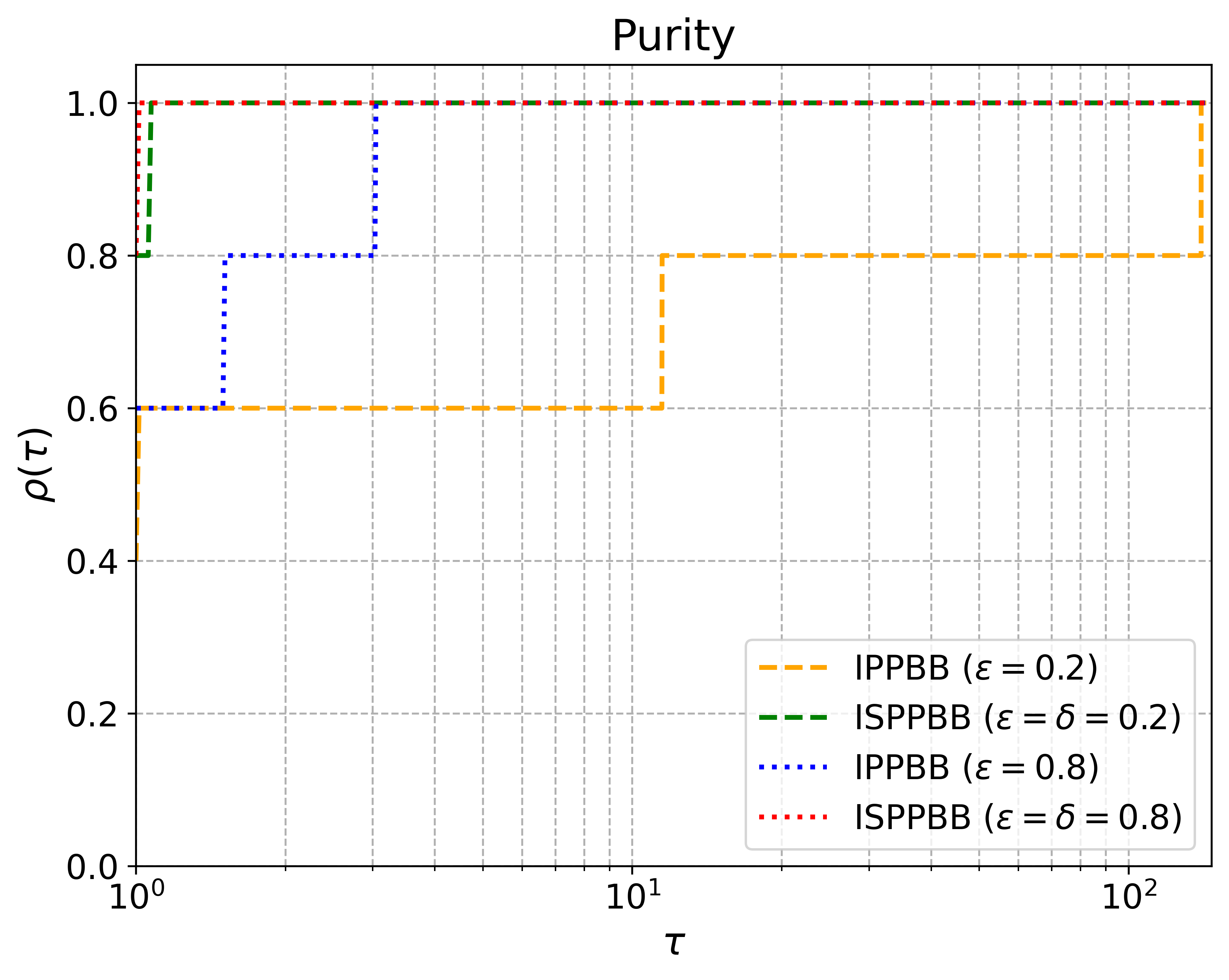} 
		\end{minipage}
	}
	\caption{Performance profiles and purity metric on linear constrained problems.}
	\label{fig5}
\end{figure}

\begin{figure}[H]
	\centering
	\subfigure[QPd]
	{
		\begin{minipage}[H]{.45\linewidth}
			\centering
			\includegraphics[scale=0.28]{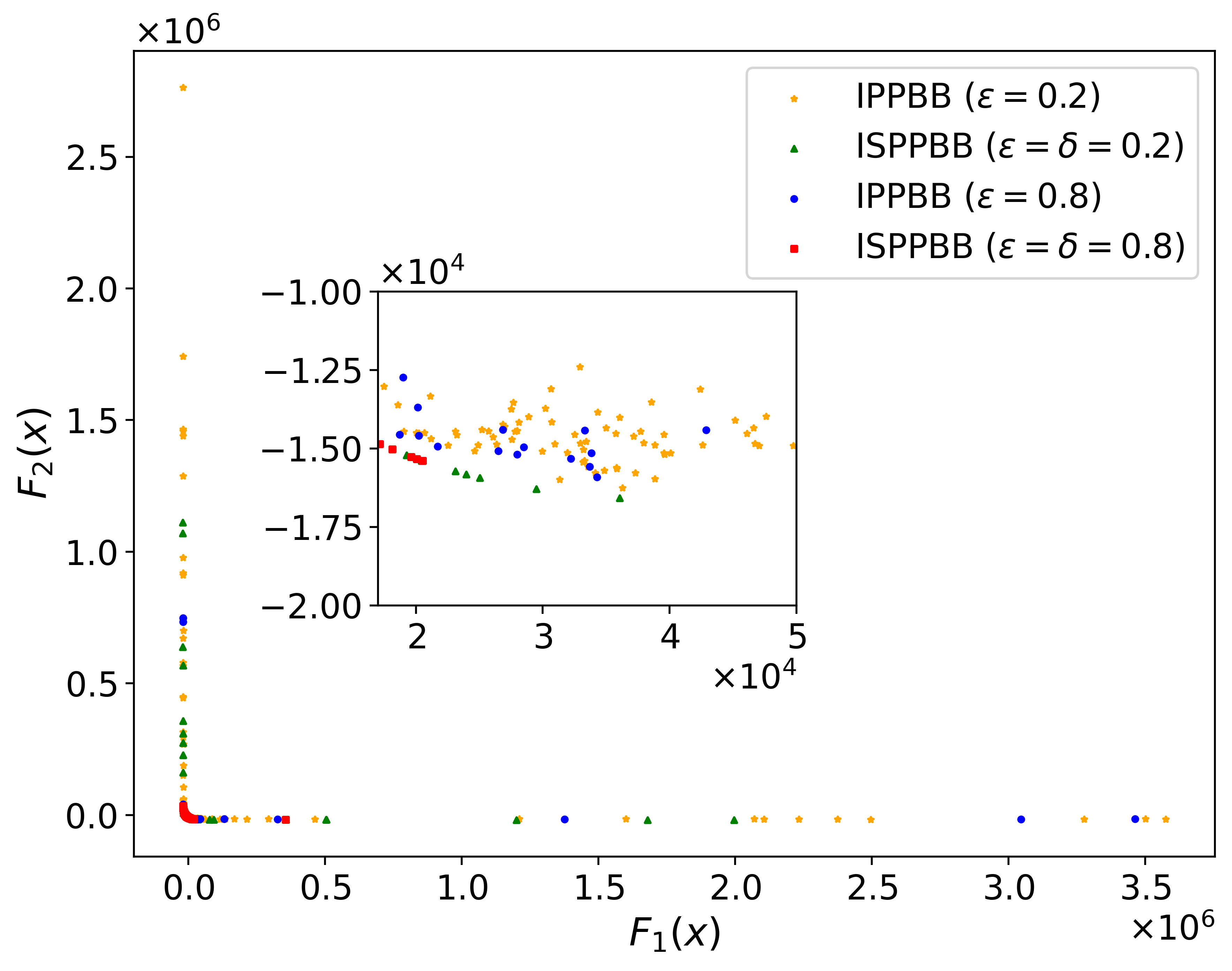} 
		\end{minipage}
	}
	\subfigure[QPe]
	{
		\begin{minipage}[H]{.45\linewidth}
			\centering
			\includegraphics[scale=0.28]{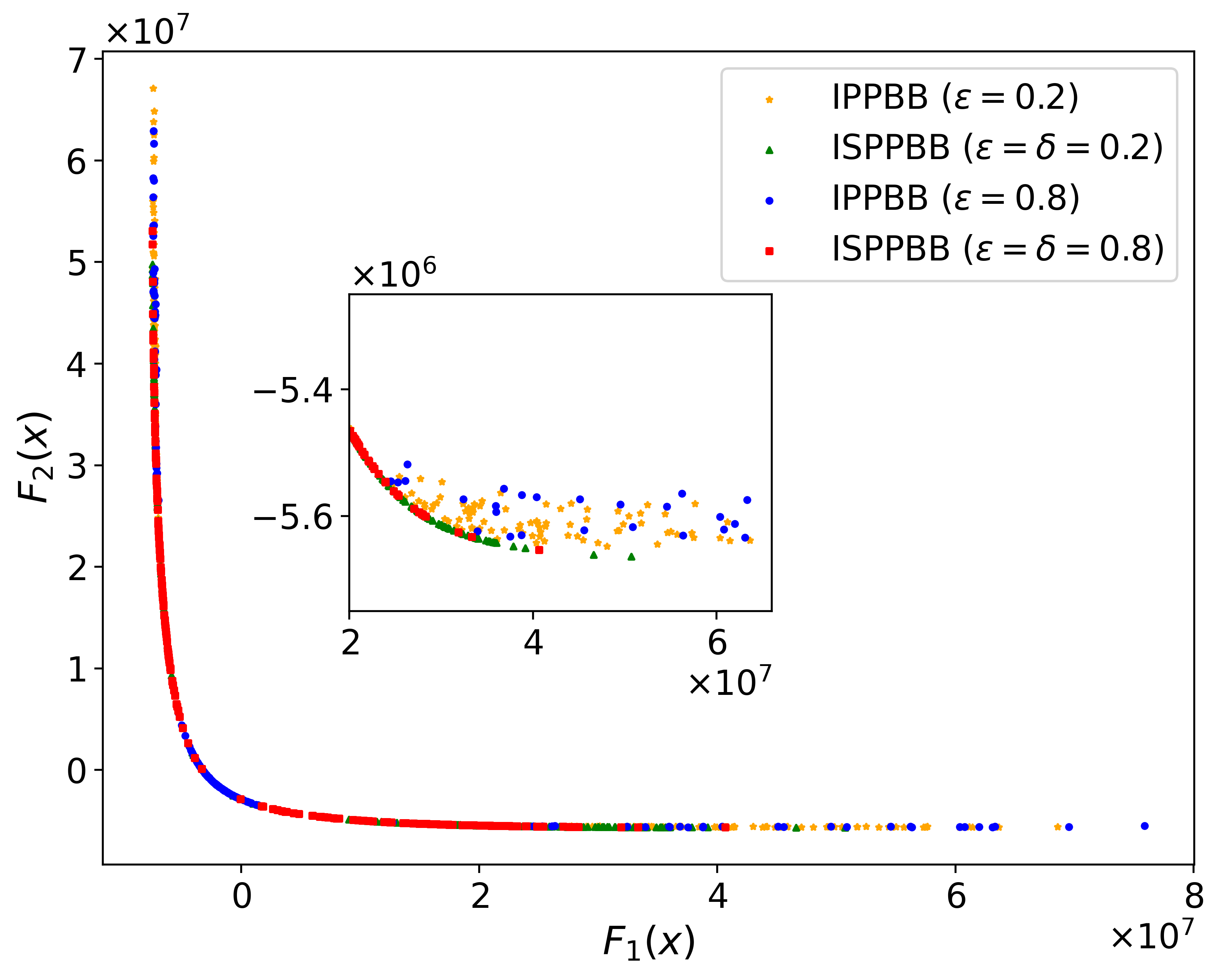} 
		\end{minipage}
	}
	
	\caption{Numerical results in value space obtained on linear constrained problems QPd and QPe.}\label{fig6}
\end{figure}

Overall, the three groups of experiments demonstrate that the proposed ISPPBB method effectively combines inexact dual solution, warm start, linear-operator-aware preconditioning, and subspace acceleration. Compared with the non-subspace variant IPPBB, ISPPBB consistently reduces the number of outer iterations and performs especially well on high-dimensional ill-conditioned problems. The results also show that the additional subspace dual subproblem is inexpensive in practice, since the relaxed inexact descent condition can often be satisfied within very few inner iterations.
\section{Conclusions}\label{sec7}
This paper developed a subspace second-order proximal framework for multiobjective composite optimization. The method is based on a preconditioned proximal Barzilai-Borwein model that combines a common metric for capturing aggregated curvature information with objective-wise Barzilai-Borwein scalings for reducing imbalance among objectives. 

To avoid evaluating complicated metric proximal mappings, we introduced a two-dimensional subspace model generated by a proximal-gradient-type direction and a projected historical direction. By using a conjugate basis with respect to the preconditioning metric, the subspace model can be reduced to tractable one-dimensional subproblems. We also extended the framework to problems with objectives of the form $f_i(x)+g_i(Ax)$. The proposed linear-operator-aware preconditioner separates the difficulty caused by the linear operator from the curvature approximation and leads to explicit proximal computations in the associated dual subproblems.

The convergence analysis was established for an inexact version of the method. The inexactness conditions are weak enough to allow inexpensive inner iterations but strong enough to guarantee sufficient descent. Under standard compactness and smoothness assumptions, every accumulation point is Pareto critical; under an additional global error-bound condition, a linear convergence rate is obtained. Numerical experiments on ill-conditioned quadratic problems with $\ell_1$-regularization, structured $\ell_1$-regularization, and linear constraints show that the subspace variant reduces the number of outer iterations compared with the non-subspace preconditioned method, while the warm-started spectral projected-gradient solver keeps the inner cost low.

Future work includes extensions to broader nonconvex composite settings and worst-case complexity analysis for the inexact subspace scheme.

\begin{acknowledgements}
	This work was funded by the Major Program of the National Natural Science Foundation of China [grant numbers 11991020, 11991024]; the Key Program of the National Natural Science Foundation of China [grant number 12431010]; the General Program of the National Natural Science Foundation of China [grant number 12171060]; NSFC-RGC (Hong Kong) Joint Research Program [grant number 12261160365]; the Team Project of Innovation Leading Talent in Chongqing [grant number CQYC20210309536]; the Natural Science Foundation of Chongqing [grant numbers ncamc2022-msxm01, CSTB2024NSCQ-LZX0140]; Major Project of Science and Technology Research Program of Chongqing Education Commission of China [grant number KJZD-M202300504]; the Science and Technology Research Program of Chongqing Education commission of China [grant number KJQN202400520]; the Chongqing Postdoctoral Research Project Special Grant [grant number 2024CQBSHTB1007] and Foundation of Chongqing Normal University [grant numbers 22XLB005, 22XLB006].
\end{acknowledgements}

\end{document}